\documentclass[11pt]{amsart}
\usepackage[dvipsnames]{xcolor}
\usepackage{amsfonts,amssymb,amsmath,amscd,amstext}
\usepackage{mathtools}
\usepackage[utf8]{inputenc}
\usepackage{graphicx}
\usepackage{mathrsfs}
\usepackage{changes}
\usepackage{comment}
\usepackage{mathdots}
\usepackage{mathabx}
\usepackage{aligned-overset}
\usepackage[a4paper,top=2cm,bottom=2cm,left=1.7cm,right=1.7cm]{geometry}
\newcommand{\hhh}{{\mathcal{H}}}
\newcommand{\rr}{\mathbb R}
\newcommand{\hh}{{\mathbb{H}}} 
\newcommand{\VfH}[1]{{\mathbb{X}(\mathbb{H}^{#1})}} 
\newcommand{\bb}{{\bf {b}}} 
\newcommand{\JJ}{{\bf {J}}} 
\renewcommand{\rho}{\varrho} 
\newcommand{\Om}{\Omega}
\newcommand{\eps}{\varepsilon}
\DeclarePairedDelimiter{\abs}{\lvert}{\rvert}
\DeclarePairedDelimiter{\norm}{\lVert}{\rVert}
\DeclareMathOperator{\divv}{div}
\DeclareMathOperator{\dsym}{D^{\mathrm sym}}
\DeclareMathOperator{\spann}{span}
\usepackage[colorlinks=true,linkcolor=teal,citecolor=purple]{hyperref}
\usepackage[noabbrev,capitalize,nameinlink]{cleveref}
\theoremstyle{plain}
\newtheorem{theorem}{Theorem}[section]
\newtheorem{proposition}[theorem]{Proposition}
\newtheorem{lemma}[theorem]{Lemma}

\theoremstyle{definition}
\newtheorem{definition}[theorem]{Definition}

\theoremstyle{remark}
\newtheorem{remark}[theorem]{Remark}
\crefname{theorem}{Theorem}{Theorems}
\crefname{lemma}{Lemma}{Lemmas}
\crefname{proposition}{Proposition}{Propositions}
\crefname{corollary}{Corollary}{Corollaries}
\crefname{definition}{Definition}{Definitions}
\crefname{remark}{Remark}{Remarks}
\crefname{example}{Example}{Examples}
\title[Renormalization of contact vector fields in $\hh^n$]{Renormalization of contact vector fields with horizontal Sobolev regularity in Heisenberg groups}
\author[L.~Ambrosio]{Luigi Ambrosio}
\address[L.~Ambrosio]{Scuola Normale Superiore, Piazza dei Cavalieri 7, 56126 Pisa (PI), Italy.}
\email{\href{mailto:luigi.ambrosio@sns.it}{luigi.ambrosio@sns.it}}
\author[G.~Somma]{Gianluca Somma}
\address[G.~Somma]{Dipartimento di Matematica ``Tullio Levi-Civita'', Università degli Studi di Padova, via Trieste 63, 35131 Padova (PD), Italy}
\email{\href{mailto:gianluca.somma@phd.unipd.it}{gianluca.somma@phd.unipd.it}}
\author[S.~Verzellesi]{Simone Verzellesi}
\address[S.~Verzellesi]{Dipartimento di Matematica ``Tullio Levi-Civita'', Università degli Studi di Padova, via Trieste 63, 35131 Padova (PD), Italy}
\email{\href{mailto:simone.verzellesi@unipd.it}{simone.verzellesi@unipd.it}}
\author[D.~Vittone]{Davide Vittone}
\address[D.~Vittone]{Dipartimento di Matematica ``Tullio Levi-Civita'', Università degli Studi di Padova, via Trieste 63, 35131 Padova (PD), Italy}
\email{\href{mailto:davide.vittone@unipd.it}{davide.vittone@unipd.it}}
\date{\today}

\subjclass{35Q49, 35R03}
\keywords{Transport equation; continuity equation; renormalization property; contact vector fields; Heisenberg group}
\thanks{\textit{Memberships and funding information.} The first author has been supported by the MIUR-PRIN 202244A7YL project ``Gradient Flows and Non-Smooth Geometric Structures with Applications to Optimization and Machine Learning''. All other authors are members of the Istituto Nazionale di Alta Matematica (INdAM), Gruppo Nazionale per l'Analisi Matematica, la Probabilità e le loro Applicazioni (GNAMPA).
S.~Verzellesi received funding through INdAM-GNAMPA 2025 Project \emph{Structure of sub-Riemannian hypersurfaces in Heisenberg groups}, CUP ES324001950001. G.~Somma, S.~Verzellesi and D.~Vittone are supported by the University of Padova and received funding through INdAM-GNAMPA 2026 Project \emph{Variational, Geometric, and Analytic Perspectives on Regularity}, CUP E53C25002010001. D.~Vittone is also supported by INdAM project {\em VAC\&GMT}.
}
\numberwithin{equation}{section}
\begin{document}
\begin{abstract} In this paper we obtain the well-posedness of the transport and continuity equations in the Heisenberg groups $\hh^n$ for a class of contact vector fields $\bb$, under natural assumptions on the regularity of $\bb$ not covered by the, now classical, Euclidean theory \cite{MR1022305}.
It is the first example of well-posedness in a genuine sub-Riemannian setting, that we obtain adapting to the $\hh^n$ geometry the mollification strategy of \cite{MR1022305}. In the final part of the paper we illustrate why our result is not covered by
the Euclidean $BV$ case solved by the first author in \cite{MR2096794}, and we compare it with the strategy of \cite{MR3265963}, based on the representation of the commutator by interpolation \`a la Bakry-\'Emery and an integral representation of the symmetrized derivative of $\bb$. 
\end{abstract}
\maketitle

\section{Introduction}

The aim of this paper is the investigation of the well-posedness of the transport and continuity equations (induced by a
possibly time-dependent vector field $\bb(\tau,p)$), respectively
\begin{equation}\label{CE_and_TE}
\displaystyle{\frac{\partial u}{\partial \tau}} - \left\langle \bb,\nabla u \right\rangle + cu = 0,\qquad 
\displaystyle{\frac{\partial u}{\partial \tau}} + \divv( \bb u)= 0
\qquad \text{in $(0,\bar{\tau}) \times \hh^n$,}
\end{equation}
when a genuinely non-Euclidean state space is involved, namely the
Heisenberg group $\hh^n$. In the Euclidean setting this topic has
received a lot of attention (see e.g.~the Lecture Notes \cite{AC_UMI,MR3283066}), starting from the seminal papers
\cite{MR1022305} and \cite{MR2096794}, which focused on Sobolev and $BV$ vector fields respectively. As explained more in detail in \Cref{sec:consequences}, one of the reasons of this attention is the Lagrangian part of the theory: having in mind the classical theory of characteristics for first order PDE's, well-posedness at the
Eulerian (PDE) level can be transferred to well-posedness at the Lagrangian (ODE) level, and conversely. Indeed, this problem has
been attacked by a variety of techniques: here we focus on the Eulerian ones, via well-posedness of \eqref{CE_and_TE}, quoting only  remarkable recent contributions on the Lagrangian side \cite{Bia_Bo_2020,BCD_2021,BCK}. As we discuss more in detail below, working with the canonical coordinates of $\hh^n$, our results can also
be read as new well-posedness results for a class of vector fields in the Euclidean space $\rr^{2n+1}$,
but we keep our focus on the geometric and intrinsic description of $\hh^n$.

\medskip
Based on the strategy introduced in \cite{MR1022305}, a mollification with respect to the spatial variables of the transport equation produces the appearance of a {\it commutator} $\mathscr C_\eps$, due to the fact that
mollification and divergence do not commute:
\begin{equation}\label{commutators_intro} 
\partial_\tau u_\eps - \left\langle \bb,\nabla u_\eps\right\rangle + cu_\eps =\mathscr C_\eps  \qquad \text{in $(-\infty,\bar{\tau}) \times\hh^n$.}
\end{equation}
The main ingredient of the well-posedness of \eqref{CE_and_TE}, linked to various degrees of regularity
of the components of $\bb$ and its very structure, is the proof of {\it strong} $L^1_{{\rm loc}}$ vanishing of the commutators as $\varepsilon \searrow 0$. When this is the case, distributional solutions to \eqref{CE_and_TE} are stable with respect to composition with suitable test functions, i.e.~are \emph{renormalizable} in the sense of \cite{MR1022305}. In turn, under appropriate integrability conditions on $\bb$, the renormalization property grants the well-posedness of 
\eqref{CE_and_TE}. In the present paper, we are interested in (possibly time-dependent) velocity fields with \emph{horizontal} Sobolev regularity. Differently from \cite{MR1022305}, where convergence of the commutators follows essentially by a characterization of Sobolev functions in terms of their first-order difference quotients, in our context we have to face one more 
non-commutativity aspect, due to the very geometry of $\hh^n$. Indeed, the vector fields $Z_1,\ldots,Z_{2n+1}=X_1,\ldots,X_n,Y_1,\ldots,Y_n,T$ representing the natural basis of the Lie algebra of $\hh^n$ have non-trivial commuting relations. Nevertheless, this lack of commutativity can be balanced when $\bb(\tau,\cdot)$ is a {\em contact} vector field and, in particular, it possesses the  structure  in~\eqref{intro_contact_vector_field} below.

\medskip
Contact vector fields in Heisenberg groups and, more generally, in contact manifolds have been extensively studied, see e.g.~\cite{MR788413,MR1317384,Libermann}. Contact vector fields appear naturally in the theory: in fact, for a (smooth) vector field $\bb$ in $\hh^n$ the following conditions are equivalent:
\begin{itemize}
    \item[(i)]  the  flow associated to $\bb$ is Lipschitz continuous with respect to the \emph{Carnot-Carathéodory metric} on $\hh^n$;
    \item[(ii)] the differential of the associated flow preserves the \emph{horizontal distribution}, i.e., the subbundle linearly generated by $ X_1,\ldots,X_n,Y_1,\ldots Y_n$;
    \item[(iii)] the commutator (Lie bracket) between $\bb$ and any horizontal vector field is horizontal;
    \item[(iv)] there exists for $\bb$ a ``generating'' function $\psi$ on $\hh^n$, i.e., a function such that
    \begin{equation} \label{intro_contact_vector_field}
\bb = \sum_{j=1}^n \left(Y_j \psi X_j - X_j \psi Y_j\right)-4\psi T.
\end{equation}    
\end{itemize}
It is the characterization in (iv) that is relevant for our purposes: in fact, we are able to prove the well-posedness of \eqref{CE_and_TE} for vector fields $\bb(\tau,\cdot)$ possessing the particular structure~\eqref{intro_contact_vector_field}  and 
where the generating function $\psi(\tau,\cdot)$ lies in the second-order horizontal Sobolev space $W^{2,s}_\hhh(\hh^n)$. 
When we try to read in Euclidean terms such a regularity for $\bb(\tau,\cdot)$, we see that we are asking less regularity with respect to the horizontal components, as $X_j\psi$ and $Y_j\psi$ are only in the horizontal Sobolev space
$W^{1,s}_\hhh(\hh^n)$, while asking more only on the vertical component, proportional to $\psi$.
Notice that vector fields with different degrees of regularity with respect to different components are not new in the well-posedness literature for \eqref{CE_and_TE}. We mention, for instance, those in \cite{LeBris_Lions_2004} of the form $(\bb_1(x_1),\bb_2(x_1,x_2))$, with $\bb_2$
only measurable with respect to $x_1$, but see also \cite{Champagnat_Jabin_2010,MR3569235}. However, in these papers, the 
commutation of the derivatives with respect to $x_1$ and $x_2$ plays a role in the proof of the strong convergence of suitable commutators. 

\medskip

As a matter of fact, the $L^1_{\rm loc}$-convergence of $\mathscr C_\eps$ seems to rely on a genuinely structural phenomenon. Indeed, the specific form \eqref{intro_contact_vector_field} of contact vector fields yields certain fine cancelations needed in the proof. Accordingly, the multiplicative factor of the vertical component is not a mere conventional choice, but significantly falls within this scheme. This highlights the distinguished role of contact vector fields, as similar cancellations seem not to be available for more general classes, neither natural (e.g.,~$\bb$ being horizontal) nor more exotic (e.g.,~$\bb$ being in the linear span of the {\em right}-invariant horizontal subbundle).


\medskip
Our approach is structured as follows. \Cref{sec:prem} provides all calculus tools in $\hh^n$ needed in this paper, including horizontal Sobolev spaces and the presentation of contact vector fields $\bb(\tau,\cdot)$ as in \eqref{intro_contact_vector_field}. Even though some computations are done in coordinates, we adopt an intrinsic notation for most of the objects involved, also bearing in mind potential extensions of our results to more general classes of sub-Riemannian spaces.

\medskip
\Cref{sec:diff_quo} provides the main analytic results regarding convergence of difference quotients in first and second-order horizontal Sobolev spaces. Indeed, while first-order difference quotients suffice to deal with velocity fields with Euclidean Sobolev regularity, the non-commutativity structure of $\hh^n$ generates second-order remainders. In turn, the contact structure of $\bb$ allows to handle them via convergence of second-order difference quotients of the vertical component of $\bb$. We address the convergence of difference quotients by a separate study of the horizontal and vertical contributions. Precisely, denote by
$w_\hhh$ and $w_{2n+1}$ the \emph{horizontal} and \emph{vertical} component of $w=(w_\hhh,w_{2n+1})$. We are interested in difference quotients with respect to the intrinsic structure of $\hh^n$. Accordingly, $\cdot$ denotes its group law, while $\left(\delta_\eps\right)_{\eps>0}$ denotes its natural, anisotropic, homogeneous structure. If $f\in W^{1,s}_\hhh$, we show that 
\begin{equation*}
    \frac{f(p \cdot \delta_\varepsilon(w_\hhh,0)) - 
f(p)}{\varepsilon}\xrightarrow[\eps\searrow 0]{L^s}\left\langle\nabla^\hhh f(p),w_\hhh\right\rangle,\qquad \frac{f(p \cdot \delta_\varepsilon(0,w_{2n+1})) - 
f(p )}{\varepsilon}\xrightarrow[\eps\searrow 0]{L^s} 0.
\end{equation*}
Instead,
if $f\in W^{2,s}_\hhh$, we show that
\begin{equation*}
    \frac{f(p \cdot \delta_\varepsilon(w_\hhh,0)) - 
f(p)-\eps\left\langle\nabla^\hhh f(p),w_\hhh\right\rangle}{\varepsilon^2}\xrightarrow[\eps\searrow 0]{L^s} \frac{1}{2} \left\langle \nabla^{2,\hhh} f(p) w_\hhh, w_\hhh \right\rangle
\end{equation*}
and 
\begin{equation*}
    \frac{f(p \cdot \delta_\varepsilon(0,w_{2n+1})) - 
f(p )}{\varepsilon^2}\xrightarrow[\eps\searrow 0]{L^s} w_{2n+1} Tf(p).
\end{equation*}
With a perturbative argument, these results can be used to obtain a crucial Taylor expansion
(still in $L^s$-sense) in \Cref{thm_convergence_2nd_diff_quot}:
\begin{equation}\label{intro_taylorexp}
        \frac{f(p \cdot \delta_\varepsilon(w)) - 
f(p)-\eps\left\langle\nabla^\hhh f(p),w_\hhh\right\rangle}{\varepsilon^2}\xrightarrow[\eps\searrow 0]{L^s} \frac{1}{2} \left\langle \nabla^{2,\hhh} f(p) w_\hhh, w_\hhh \right\rangle+w_{2n+1} Tf(p).
\end{equation}
%

\medskip
\Cref{sec:commu} uses this expansion to prove the strong $L^1_{\rm loc}$-convergence of commutators
$\mathscr C_\epsilon$ in \eqref{commutators_intro}; we emphasize once again that this computation uses in an essential way the cancellations provided by the special structure of $\bb$ in \eqref{intro_contact_vector_field}. 
More precisely, $\mathscr C_\epsilon$ admits an integral representation which splits into two components: one depending on the reaction term $c$, the other depending on the velocity field $\bb$ (cf. \Cref{prop_formadelresto}). If the former can be managed by a standard procedure, the latter needs to be handled by a careful combination of the special structure of $\bb$ provided by \eqref{intro_contact_vector_field} and the Taylor expansion \eqref{intro_taylorexp}. Denote by $b_1,\ldots,b_{2n}$ the horizontal components of $\bb$, and by $b_{2n+1}$ its vertical component. Beside the first-order, horizontal contribution 
\begin{equation}\label{intro_contributoorizz}
    \sum_{j=1}^{2n} \int_{\hh^n} \frac{b_j(p \cdot \delta_\varepsilon(w))-b_j(p)}{\varepsilon} u(p \cdot \delta_\varepsilon(w)) Z_j \rho(w) \,dw
\end{equation}
reminiscent of its Euclidean counterpart, the contact structure of $\bb$ crucially intervenes to put in evidence the second-order, vertical correction 
\begin{equation}\label{intro_contributovert}
    \int_{\hh^n} \frac{b_{2n+1}(p \cdot \delta_\varepsilon(w))-b_{2n+1}(p) - \varepsilon \left\langle \nabla^\hhh b_{2n+1}(p), w_\hhh \right\rangle}{\varepsilon^2} u(p \cdot \delta_\varepsilon(w)) T \rho(w) \,dw,
\end{equation}
because it grants the vanishing of the the remainder 
\begin{equation*}
    \int_{\hh^n} \frac{\left\langle \nabla^\hhh b_{2n+1}(p) + 4\JJ(\bb(p)), \delta_\varepsilon(w) \right\rangle}{\varepsilon^2} u(p \cdot \delta_\varepsilon(w)) T \rho(w) \,dw.
\end{equation*}
In the above formula, $\JJ$ is a suitable ninety-degrees rotation also known as \emph{complex structure} (see \eqref{eq_J}). Convergence of \eqref{intro_contributoorizz} and \eqref{intro_contributovert} is then ensured by \eqref{intro_taylorexp}.

\medskip
Once convergence of commutators has been ensured, the consequences at the level of well-posedness of the transport equation are obtained following 
the path of \cite{MR1022305}, with essentially no modification, under natural growth conditions on $u$, $\bb$ and its spatial divergence,
see \Cref{teo_distributional_implies_renormalized} and \Cref{teo_uniqueness_null_solution}.

\medskip
In \Cref{sec:consequences} we derive for the sake of illustration some consequences of the renormalization property, namely the existence of renormalized solutions to the continuity equation for arbitrary measurable initial data (based on \cite{MR1022305}) and the existence and the uniqueness of Regular Lagrangian Flows in $\hh^n$, see \Cref{def:RLF} (based on \cite{MR2096794}, under one-sided $L^\infty$ bounds on the spatial divergence of $\bb$).

\medskip
\Cref{sec:connection} provides, on the basis of the functional analytic argument of \Cref{Baire}
and the use of highly oscillating test functions, the proof of the density in $W^{2,s}_\hhh(\hh^n)$ of generators of contact vector fields whose horizontal components do not belong to $BV(\rr^{2n+1})$. It follows that our result cannot be deduced from \cite{MR1022305} or \cite{MR2096794}. We believe that similar arguments can be used to rule out the use of other regularity classes of Euclidean vector fields considered in the literature. In the end of the section we compare our approach with that of \cite{MR3265963}, where well-posedness results have been obtained in a large class of metric measure spaces $(X,d,\mu)$, including Riemannian manifolds and the $RCD(K,\infty)$ metric measure spaces of \cite{MR3205729}. It seems that, while the setting of \cite{MR3265963} rules out non-horizontal vector fields, the estimate on the symmetrized derivative $\dsym\bb$ (see \eqref{definizioonedsym}) introduced in \cite{MR3265963} requires a non-horizontal structure. We believe that the conciliation of these two techniques requires further investigations and we leave them for the future.

\section{Preliminaries}\label{sec:prem}
\subsection{Main notation}
We write $\infty=+\infty$. If $\bar{\tau} \in (0,\infty]$, the notation $[0,\bar{\tau}]$ stays for the usual closed interval if $\bar{\tau}<\infty$ or $[0,\infty)$ if $\bar{\tau}=\infty$. Given open sets $A,\Om$, when $\overline{A}$ is a compact subset of $\Om$, we write $A\Subset\Om$. 
We denote by $\langle\cdot,\cdot\rangle_{\rr^N}$ and by $|\cdot|_{\rr^N}$ the Euclidean scalar product and its induced norm. If $s \in [1,\infty]$, we denote its H\"older conjugate by $s'\in [1,\infty]$, i.e., $\frac{1}{s}+\frac{1}{s'}=1$.
Given a real-valued function $f$, we denote its support by $\mathrm{supp}(f)$.
If $f$ is differentiable at a point $q$ in its domain, we denote by $df_q$ its differential at $q$. We denote either by $\mathcal{L}^N$ or by $|\cdot|$ the $N$-dimensional Lebesgue measure. If $\mu$ is a measure and $f$ is a $\mu$-measurable function, we denote the associated push-forward measure by $f_\#\mu$.

\subsection{Heisenberg groups}
Fix $n \in \mathbb{N}\setminus\{0\}$, and set $N\coloneqq 2n+1$. We endow $\rr^N$ with the (non-abelian) group law
\begin{equation*}
\begin{pmatrix}
x \\ y \\ t
\end{pmatrix} \cdot \begin{pmatrix}
x' \\ y' \\ t'
\end{pmatrix} = \begin{pmatrix}
x+x' \\ y+y' \\ t + t' +2 \sum_{j=1}^n(x_j'y_j - x_j y'_j)
\end{pmatrix}
\text{ \qquad for every $(x,y,t),(x',y',t') \in \rr^n \times \rr^n \times \rr$.}
\end{equation*}
In this way, $(\rr^N,\cdot)$ is (the most relevant instance of) a \emph{stratified Lie group} (see \cite{MR2363343}), known as \emph{$n$-th Heisenberg group} and denoted from now on by $\hh^n$. Observe that the identity element is $0$ and the inverse of $p \in \hh^n$ is $p^{-1}=-p$. Given $w=(w_1,\ldots,w_N)=(x,y,t) \in\hh^n$, we may adopt the notation
\begin{equation*}
w_\hhh=(w_1,\ldots,w_{2n})=(x,y) \in \rr^n \times \rr^n,\qquad|w_\hhh|=\sqrt{w_1^2+\ldots+w_{2n}^2}.
\end{equation*}
We may also write
\begin{equation*}
\begin{split}
    x_j(w)=w_j, \qquad y_j(w)=w_{j+n} \qquad \text{for every $j=1,\ldots,2n$. }
\end{split}
\end{equation*}
The left and right translations are, respectively, defined by
\begin{equation*}
L_q(p) = q \cdot p, \qquad R_q(p) = p \cdot q \qquad \text{for every $p,\,q \in \hh^n$.}
\end{equation*}
A basis of the Lie algebra of left-invariant vector fields is given by
\begin{equation*}
\begin{split}
X_j (p) & = \left(dL_{p}\right)_0 \left(\partial_{x_j}\right)=\partial_{x_j}+2y_j \partial_t,\\
Y_j (p) & = \left(dL_{p}\right)_0 \left(\partial_{y_j}\right)=\partial_{y_j}-2x_j \partial_t,\\
T (p) & = \left(dL_{p}\right)_0 \left(\partial_t\right)=\partial_t
\end{split}
\qquad \text{for every $j=1,\ldots,n$, $p=(x,y,t) \in\hh^n$.}
\end{equation*}
Observe that the only nonzero commutators are
\begin{equation} \label{eq_commutation}
[Y_j,X_j]=4T \qquad \text{for every $j=1,\ldots,n$}.
\end{equation}
Because of \eqref{eq_commutation}, the \emph{horizontal distribution} $\hhh$, defined by
\begin{equation*}
    \hhh_p=\spann(X_1(p),\ldots,X_n(p),Y_1(p),\ldots,Y_n(p))\qquad\text{for every $p\in\hh^n$,}
\end{equation*}
satisfies the so-called \emph{H\"ormander condition} (see ~\cite{MR2363343}). 
We endow $\hh^n$ with the unique left-invariant Riemannian metric $\langle \cdot,\cdot \rangle$ that makes this basis orthonormal. We may also write
\begin{equation*}
Z_j = X_j, \qquad Z_{j+n} = Y_j, \qquad Z_N = T \qquad \text{for every $j=1,\ldots,n$.}
\end{equation*}
Denoting by $\VfH{n}$ the class of locally integrable vector fields in $\hh^n$,
any $\bb\in\VfH{n}$ can be canonically written as $\sum_{j=1}^N b_jZ_j$ and 
$\langle\bb,\bb'\rangle=\sum_{j=1}^N b_jb_j'$. Accordingly, with respect to this basis, the intrinsic gradient of a function $f$ (with suitable regularity) is denoted by
\begin{equation*}
\nabla f = (X_1 f,\ldots,X_n f,Y_1 f,\ldots,Y_n f,Tf),
\end{equation*}
while occasionally we denote by $\nabla^{\rr^N}f$ the gradient with respect to the Euclidean structure. The Riemannian measure induced by $\langle\cdot,\cdot\rangle$ coincides with the Lebesgue measure $\mathcal L^N$. Accordingly, the divergence of a vector field $\bb$, defined by the formula
\begin{equation*}
    \int_{\hh^n}d\varphi(\bb)\,dp=-\int_{\hh^n}\varphi\,\divv\bb\,dp\qquad\text{for every $\varphi\in C^\infty_c(\hh^n)$},
\end{equation*}
coincides with the Euclidean divergence. Moreover,
\begin{equation*}
    \divv \left(\sum_{j=1}^Nb_j Z_j\right)=\sum_{j=1}^NZ_jb_j.
\end{equation*}
In addition, if $u$ is a smooth function and $\bb\in\VfH{n}$, one has
\begin{equation}\label{eq:consistency_scalar}
    \left\langle \nabla^{\rr^N}u,\bb\right\rangle_{\rr^N}=du(\bb)=\left\langle\nabla u,\bb\right\rangle.
\end{equation}
By the above considerations, the transport equation can be equivalently formulated owing to the Riemannian structure that we have fixed.
The \emph{horizontal gradient} and the \emph{horizontal Hessian} (see \cite{MR2363343}) are, respectively, defined by
\begin{equation*}
\begin{split}
    \nabla^\hhh f & = (X_1f,\ldots,X_n f,Y_1 f,\ldots,Y_n f),\\
    \left(\nabla^{2,\hhh} f\right)_{ij} & = Z_i Z_j f \qquad \text{for every $i,j=1,\ldots,2n$.}
\end{split}
\end{equation*}
We will also consider the right-invariant vector fields
\begin{equation} \label{eq_right_vector_fields}
\begin{split}
X_j^r (p) & = Z_j^r(p)=\left(dR_{p}\right)_0 \left(\partial_{x_j}\right)=X_j(p)-4y_jT(p),\\
Y_j^r (p) & = Z_{n+j}^r(p)=\left(dR_{p}\right)_0 \left(\partial_{y_j}\right)=Y_j(p)+4x_jT(p),\\
T^r (p) & = Z_{N}^r(p)=\left(dR_{p}\right)_0 \left(\partial_t\right)=T(p)
\end{split}
\qquad \text{for $j=1,\ldots,n$, $p=(x,y,t) \in \hh^n$.}
\end{equation}
Left-invariant vector fields are complete (see \cite[Theorem 9.18]{MR2954043}). Accordingly, if $Z$ is a left-invariant vector field and $\gamma$ is its integral curve starting from $0$, we set
\begin{equation*}
\exp(Z) = \gamma(1).
\end{equation*}
We endow $\hh^n$ with a \emph{homogeneous structure} provided by \emph{intrinsic dilations} (see ~\cite{MR2363343}). 
Namely, we set 
\begin{equation*}
\delta_\lambda(x,y,t) = (\lambda x,\lambda y,\lambda^2 t) \qquad \text{for every $\lambda \geq 0$, $(x,y,t) \in\hh^n$}.
\end{equation*}
In this way, $\delta_\lambda$ is a Lie group isomorphism of $\hh^n$ for any $\lambda>0$. Moreover, we will exploit the \emph{complex structure} 
$\JJ: \hh^n \to \hh^n$ (cf.~\cite{MR2165405}) defined by
\begin{equation} \label{eq_J}
\JJ(x,y,t)=(-y,x,0) \qquad \text{for every $(x,y,t) \in \hh^n$}.
\end{equation}
 It is easy to see that
\begin{equation} \label{eq_J_properties}
\begin{split}
  \JJ(\JJ(w)) & =(-w_\hhh,0),\\
  \langle \JJ(w), z\rangle &=-\langle w, \JJ(z)\rangle
\end{split}
\qquad \text{for every $w,z \in\hh^n$.}
\end{equation}
We equip $\hh^n$ with the so-called \emph{Carnot-Carathéodory distance} $d$ (see \cite{MR3587666}). We recall that, given $p,q\in\hh^n$, 
\begin{equation*}
    d(p,q)=\inf\left\{\int_0^1\sqrt{\left\langle\dot\gamma,\dot\gamma\right\rangle}\,d\tau\,:\, \gamma:[0,1]\to\hh^n\text{ is absolutely continuous, horizontal, } \gamma(0)=p,\,\gamma(1)=q\right\},
\end{equation*}
where an absolutely continuous curve is \emph{horizontal} if $\dot\gamma(\tau)\in\hhh_{\gamma(\tau)}$ for a.e.~$\tau$.
The Carnot-Carathéodory distance is compatible both with the group structure and the homogeneous structure, namely
\begin{equation} \label{eq_CC_distance}
\begin{split}
    d(w \cdot p, w \cdot q) & = d(p,q),\\
    d(\delta_\lambda(p),\delta_\lambda(q)) & = \lambda d(p,q)
\end{split}
\qquad \text{for every $p,q,w \in \hh^n,\,\lambda>0$.}
\end{equation}
We indicate by
\begin{equation*}
B(p,r)=\{q \in\hh^n : d(p,q)<r\}
\end{equation*}
the open ball with center $p \in \hh^n$ and radius $r>0$. We denote by $\mathrm{Lip} (f)$ the Lipschitz constant of $f:\hh^n\to\rr$ with respect to $d$.
The Lebesgue measure $|\cdot|_{\hh^n}$ is both a left and a right Haar measure of $(\hh^n,\cdot)$ (see \cite{MR3587666}). In particular,
\begin{equation} \label{eq_Lebesgue_Haar}
\begin{split}
    \abs*{L_p(E)}&=\abs{E},\\
    \abs*{R_p(E)}&=\abs{E}
\end{split}
\qquad \text{for every $E \subseteq\hh^n$ measurable, $p \in\hh^n$.}
\end{equation}
Moreover,
\begin{equation} \label{eq_dilation_Lebesgue}
\abs*{\delta_\lambda(E)}=\lambda^Q\abs*{E} \qquad \text{for every $E \subseteq\hh^n$ measurable, $\lambda \geq 0$,}
\end{equation}
where $Q\coloneqq 2n+2$ is known as \emph{homogeneous dimension} of $(\hh^n,\cdot)$ (see \cite{MR3587666}). Left and right-invariant vector fields are related by the inversion map as follows.
\begin{lemma}\label{lemscambiarederivateeinversione}
Define $\iota: \hh^n\to\hh^n$ by $\iota(q)=q^{-1}$. Then
\begin{equation} \label{scambiarederivateeinversione}
Z_j(\varphi \circ \iota)=-Z_j^r \varphi \circ \iota \qquad \text{for every } \varphi \in C^1(\hh^n),\, j=1,\ldots,N.
\end{equation}
In particular, if $\varphi=\varphi\circ\iota$, then $Z_j\varphi=-Z_j^r\varphi\circ\iota$.
\end{lemma}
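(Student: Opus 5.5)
The identity is a direct consequence of the definitions. We write $Z_j$ and $Z_j^r$ as derivatives along one-parameter curves and then use the fact that inversion interchanges left and right translations.

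Fix $q\in\hh^n$ and $j\in\{1,\ldots,N\}$, and let $e_j$ be the $j$-th canonical vector of $\rr^N$. Since $\delta\mapsto\delta e_j$ is a one-parameter subgroup of $\hh^n$ (the group law is linear along each coordinate line through $0$), we have $\exp(\delta Z_j)=\delta e_j$. Hence the left-invariant field satisfies
\begin{equation*}
Z_j f(q)=\frac{d}{d\delta}\Big|_{\delta=0} f(q\cdot \delta e_j),
\end{equation*}
and the right-invariant field satisfies $Z_j^r f(q)=\frac{d}{d\delta}\big|_{\delta=0} f(\delta e_j\cdot q)$. Both formulas follow from the definitions $Z_j(p)=(dL_p)_0(\partial_j)$ and $Z_j^r(p)=(dR_p)_0(\partial_j)$, because $\delta\mapsto q\cdot\delta e_j=L_q(\delta e_j)$ is a curve through $q$ with velocity $(dL_q)_0(e_j)$, and similarly for $R_q$.

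Applying the first formula to $f=\varphi\circ\iota$ and using $(q\cdot\delta e_j)^{-1}=(\delta e_j)^{-1}\cdot q^{-1}=(-\delta e_j)\cdot q^{-1}$ gives
\begin{equation*}
Z_j(\varphi\circ\iota)(q)=\frac{d}{d\delta}\Big|_{\delta=0}\varphi\big((-\delta e_j)\cdot q^{-1}\big)=-Z_j^r\varphi(q^{-1}),
\end{equation*}
by the chain rule with $\delta\mapsto-\delta$. This is \eqref{scambiarederivateeinversione}. The case where $\varphi$ is symmetric, $\varphi=\varphi\circ\iota$, follows by substituting this identity into the left-hand side.

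Only $C^1$ regularity of $\varphi$ is needed, since each step differentiates the composition of $\varphi$ with a smooth curve. The one point to verify is that $\delta e_j$ is a one-parameter subgroup. A quick alternative check in coordinates: for $j=1$ one has $X_1(\varphi\circ\iota)(q)=-(\partial_{x_1}\varphi)(-q)+2y_1\cdot(-\partial_t\varphi)(-q)$. Evaluating \eqref{eq_right_vector_fields} at $-q$ gives $X_1^r\varphi(-q)=\partial_{x_1}\varphi(-q)+2(-y_1)\partial_t\varphi(-q)-4(-y_1)\partial_t\varphi(-q)=\partial_{x_1}\varphi(-q)+2y_1\partial_t\varphi(-q)$, which matches. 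The $Y_j$ and $T$ cases are analogous, so there is no genuine obstacle.
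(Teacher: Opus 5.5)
Your proof is correct and is essentially the paper's argument. The paper differentiates the identity $\iota\circ L_q=R_{q^{-1}}\circ\iota$ at $0$ and uses $d\iota_0=-\mathrm{id}$; you apply the same identity $(q\cdot w)^{-1}=w^{-1}\cdot q^{-1}$ along the curve $w=\delta e_j$, where $(\delta e_j)^{-1}=-\delta e_j$ plays the role of $d\iota_0=-\mathrm{id}$. The one-parameter-subgroup property of $\delta e_j$ is not actually needed, since only the velocity $e_j$ at $\delta=0$ matters, but it is true, so nothing breaks.
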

\begin{proof}
For every $q \in \hh^n$, the identity $\iota \circ L_q=R_{q^{-1}} \circ \iota$ holds. Differentiating it at $0$,
\begin{equation*}
d\iota_q((dL_q)_0(Z_j(0))=(dR_{q^{-1}})_0(d\iota_0(Z_j(0)),
\end{equation*}
that is, thanks to the left-invariance of $Z_j$ and the fact that $-d\iota$ is the identity map,
\begin{equation} \label{eq_differential_inversion}
d\iota_q(Z_j(q))=-(dR_{q^{-1}})_0(Z_j(0))=-Z_j^r\left(q^{-1}\right).
\end{equation}
Therefore,
\begin{equation*}
\begin{split}
    Z_j(\varphi \circ \iota)(q)  
    =d\varphi_{q^{-1}}(d\iota_q(Z_j(q)))\overset{\eqref{eq_differential_inversion}}{=}-d\varphi_{q^{-1}}\left(Z_j^r\left(q^{-1}\right)\right) =-Z_j^r\varphi\left(q^{-1}\right) =-(Z_j^r\varphi \circ \iota)(q),
\end{split}
\end{equation*}
which is the thesis.
\end{proof}

We recall that right translations are continuous in $L^s(\hh^n)$. 

\begin{lemma} \label{lem_continuity_translations}
Let $s \in [1,\infty)$ and $f \in L^s(\hh^n)$. Then $\lim_{q\to 0}\norm*{f \circ R_q - f}_{L^s(\hh^n)} =0$.
\end{lemma}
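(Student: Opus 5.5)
The plan is the usual density argument: prove the statement first for a dense class of functions, then extend it using a uniform bound.

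First I would take $g\in C_c(\hh^n)$, a compactly supported continuous function. The map $(p,q)\mapsto p\cdot q$ is a polynomial, hence continuous. So for $q$ in a fixed bounded neighbourhood $U$ of $0$, say $q\in B(0,1)$, all the functions $g\circ R_q$ are supported in the compact set $K\coloneqq \mathrm{supp}(g)\cdot \overline{B(0,1)}^{-1}$. Moreover $g$ is uniformly continuous on $\hh^n$. The map $(p,q)\mapsto p\cdot q$ is continuous at $q=0$ uniformly in $p\in K$, and $p\cdot q-p$ is controlled by $|q|$ and $|p||q|$. Hence $\sup_p|g(p\cdot q)-g(p)|\to 0$ as $q\to 0$. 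Since $g\circ R_q-g$ is supported in $K$, we get
\[
\norm{g\circ R_q-g}_{L^s}\le |K|^{1/s}\sup_{\hh^n}|g\circ R_q-g|\to 0 .
\]

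Next, for general $f\in L^s(\hh^n)$ with $s<\infty$, I use that $C_c(\hh^n)$ is dense in $L^s(\mathcal L^N)$. Fix $\eta>0$ and choose $g\in C_c(\hh^n)$ with $\norm{f-g}_{L^s}<\eta$. The key point is that right translations are isometries of $L^s$. Indeed, by \eqref{eq_Lebesgue_Haar} the Lebesgue measure is right-invariant, so $(R_q)_\#\mathcal L^N=\mathcal L^N$. Since $R_q$ is a bijection with inverse $R_{q^{-1}}$, this gives $\norm{h\circ R_q}_{L^s}=\norm{h}_{L^s}$ for every $h\in L^s$. Then
\[
\norm{f\circ R_q-f}_{L^s}\le \norm{(f-g)\circ R_q}_{L^s}+\norm{g\circ R_q-g}_{L^s}+\norm{g-f}_{L^s}\le 2\eta+\norm{g\circ R_q-g}_{L^s}.
\]
Taking $\limsup_{q\to0}$ and then letting $\eta\to0$ concludes the proof.

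There is no real obstacle here. The only points to check are the measure-preservation of $R_q$, which follows from \eqref{eq_Lebesgue_Haar} (one can also verify directly that the Jacobian of $p\mapsto p\cdot q$ is triangular with unit diagonal), and the uniform continuity on compact sets in the first step.
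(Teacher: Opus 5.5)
Your proposal is correct and follows the same route as the paper: the operators $f\mapsto f\circ R_q-f$ are uniformly bounded on $L^s(\hh^n)$ by the right-invariance \eqref{eq_Lebesgue_Haar}, they converge to $0$ on $C_c(\hh^n)$, and density of $C_c(\hh^n)$ in $L^s(\hh^n)$ concludes. You only spell out what the paper leaves implicit, namely the uniform-continuity estimate on a fixed compact set and the explicit $\eta$-argument.
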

\begin{proof}
The family of linear operators $\{f \circ R_q - f\}_{q\in\hh^n}$ is uniformly bounded in $L^s(\hh^n)$ due to \eqref{eq_Lebesgue_Haar}. Since they converge to $0$ as $q\to 0$ when $f\in C_c(\hh^n)$, the density of $C_c(\hh^n)$ in $L^s(\hh^n)$ grants that the same property holds for any $f\in L^s(\hh^n)$.
\end{proof}
A similar property holds for left translations, with the same proof. In the sequel many other convergence results, in Sobolev classes, will be achieved by a similar functional analytic argument.

\subsection{Group convolution}
For an account on group convolution, we refer to \cite{MR0657581}. Given $f,g\in L^1_{\mathrm{loc}}(\hh^n)$, their \emph{group convolution} $f*g$ is defined by
\begin{equation*}
(f*g)(p) \coloneqq \int_{\hh^n} f(q)g\left(q^{-1} \cdot p\right) \, dq \overset{\eqref{eq_Lebesgue_Haar}}{=} 
\int_{\hh^n} f\left(p \cdot q^{-1}\right)g(q) \, dq \qquad \text{for every $p \in \hh^n$,}
\end{equation*}
provided that the integrals converge. If the domain of $f$ and $g$ is an arbitrary open set $\Omega \subseteq \hh^n$, we extend them to be $0$ outside $\Omega$, and $f*g$ is still well-defined. If in addition $g \in C^1(\hh^n)$, 
\begin{equation} \label{eq_derivatives_convolution}
\begin{split}
    Z(f*g) & =f*Zg \,\qquad \text{if $Z$ is left-invariant,}
\end{split}
\end{equation}
since $f*g$ can be viewed as the superposition, weighted by $f$, of left translations of $g$.
In order to introduce group mollification, we fix a mollifier $\rho \in C_c^\infty(\hh^n)$ such that
\begin{equation}\label{eq_mollificatori}
\rho \geq 0, \quad \rho(p)=\rho\left(p^{-1}\right), \quad \int_{\hh^n} \rho\,dp = 1 \quad \text{and} \quad \textrm{supp}(\rho) \subseteq B(0,1).
\end{equation}
For every $\varepsilon>0$, set
\begin{equation} \label{eq_def_mollifier}
\rho_\varepsilon(p)=\frac{1}{\varepsilon^Q}\rho \left(\delta_{\frac{1}{\varepsilon}}(p) \right) \qquad \text{for every $p \in \hh^n$.}
\end{equation}
Notice that
\begin{equation} \label{eq_derivative_mollification}
\begin{split}
    Z^r_j\rho_\varepsilon(p) &=\frac{1}{\varepsilon^{Q+1}}Z^r_j\rho \left(\delta_{\frac{1}{\varepsilon}}(p) \right) \qquad \text{for every $j=1,\ldots,2n$,\, $p\in\hh^n$,}\\
    T\rho_\varepsilon(p) &=\frac{1}{\varepsilon^{Q+2}}T\rho \left(\delta_{\frac{1}{\varepsilon}}(p) \right)\qquad\text{for every  $p\in\hh^n$,}
\end{split}
\end{equation}
and analogous formulas hold for every $Z_j$. The following proposition collects standard convergence properties of group mollification.
\begin{proposition} \label{prop_group_mollification}
Let $s \in [1,\infty)$. Let $u \in L^s(\hh^n)$ and set $u_\varepsilon=u * \rho_\varepsilon$ for every $\varepsilon>0$. Then
\begin{equation} \label{eq_Young_inequality}
\begin{split}
    \norm*{u*\rho_\varepsilon}_{L^s(\hh^n)}
    & \leq \norm*{u}_{L^s(\hh^n)}
\end{split}
\qquad \text{for every $\varepsilon>0$.}
\end{equation}
Moreover, $u_\varepsilon \xrightarrow[]{L^s} u$ as $\varepsilon \searrow 0$.
\end{proposition}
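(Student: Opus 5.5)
The plan is to imitate the Euclidean proof, using the two facts already available: the Lebesgue measure is a bi-invariant Haar measure, by \eqref{eq_Lebesgue_Haar}, and translations are continuous in $L^s$, by \Cref{lem_continuity_translations}.

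First I rewrite the convolution in a form adapted to right translations. From the second expression of the group convolution, $u_\varepsilon(p)=\int_{\hh^n} u(p\cdot q^{-1})\rho_\varepsilon(q)\,dq$. The function $\rho_\varepsilon$ is nonnegative with unit integral, since by \eqref{eq_dilation_Lebesgue} the change of variables $q=\delta_\varepsilon(q')$ has Jacobian $\varepsilon^Q$. Hence $u_\varepsilon(p)$ is an average of the right translates $u\circ R_{q^{-1}}$ weighted by the probability density $\rho_\varepsilon$.

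\textbf{Young inequality \eqref{eq_Young_inequality}.} I apply Minkowski's integral inequality, or equivalently Jensen's inequality for the convex function $|\cdot|^s$ with respect to the probability measure $\rho_\varepsilon\,dq$, followed by Fubini:
\begin{equation*}
\norm*{u_\varepsilon}_{L^s}\le \int_{\hh^n}\norm*{u\circ R_{q^{-1}}}_{L^s}\rho_\varepsilon(q)\,dq=\norm*{u}_{L^s}.
\end{equation*}
The equality holds because right translations preserve Lebesgue measure by \eqref{eq_Lebesgue_Haar}, so $\norm*{u\circ R_{q^{-1}}}_{L^s}=\norm*{u}_{L^s}$. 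Measurability of $(p,q)\mapsto u(p\cdot q^{-1})$ is clear, since the group law is a polynomial map.

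\textbf{Convergence $u_\varepsilon\to u$ in $L^s$.} I use $\int\rho_\varepsilon=1$ to write $u_\varepsilon-u=\int (u\circ R_{q^{-1}}-u)\rho_\varepsilon(q)\,dq$. By the same Minkowski argument,
\begin{equation*}
\norm*{u_\varepsilon-u}_{L^s}\le \sup_{q\in B(0,\varepsilon)}\norm*{u\circ R_{q^{-1}}-u}_{L^s}.
\end{equation*}
Here I used that $\mathrm{supp}(\rho_\varepsilon)\subseteq\delta_\varepsilon(B(0,1))=B(0,\varepsilon)$, which follows from the homogeneity of $d$ in \eqref{eq_CC_distance}. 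For $q\in B(0,\varepsilon)$ also $q^{-1}\in B(0,\varepsilon)$, since $d(0,q^{-1})=d(q,0)$ by left invariance. As $\varepsilon\searrow0$ the points $q^{-1}$ tend to $0$, and the right-hand side vanishes by \Cref{lem_continuity_translations}.

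The only mildly delicate points are the correct choice of convolution formula, so that right translations rather than left ones appear (either would work, given the remark after \Cref{lem_continuity_translations}), and the justification of Minkowski's inequality. Both are routine, so I do not expect a real obstacle.
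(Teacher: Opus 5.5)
Your proof is correct. The paper gives no proof of this proposition and simply quotes it as standard; your argument is the standard one: average the right translates $u\circ R_{q^{-1}}$ against the probability density $\rho_\varepsilon$, then use Minkowski's integral inequality, the right invariance \eqref{eq_Lebesgue_Haar}, the inclusion $\mathrm{supp}(\rho_\varepsilon)\subseteq B(0,\varepsilon)$ and \Cref{lem_continuity_translations}. This is the same scheme the paper itself runs in Step 2 of the proof of \Cref{lem_commutation}.

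The only step to tighten is the measurability remark. A Lebesgue measurable function composed with a polynomial map need not be measurable, so ``polynomial'' alone is not a valid reason. Instead, take a Borel representative of $u$, as the paper does, and observe that $(p,q)\mapsto(p\cdot q^{-1},q)$ is a polynomial diffeomorphism of $\rr^{2N}$ with unit Jacobian. Null sets therefore pull back to null sets, and $u(p\cdot q^{-1})$ is well defined up to negligible sets.
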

If $\mathcal F$ is a distribution acting on $C^\infty_c\left([0,\bar \tau)\times\hh^n\right)$ and $\rho\in C^\infty_c(\hh^n)$, we define the \emph{spatial group convolution of $\mathcal F$ by $\rho$} by 
\begin{equation}\label{deficonvdistr}
    \left\langle \mathcal F*\rho,\varphi\right\rangle\coloneqq\left\langle \mathcal F,\varphi*\check\rho\right\rangle\qquad\text{for every $\varphi\in C^\infty_c\left([0,\bar \tau)\times\hh^n\right)$,}
\end{equation}
where $\check{\rho}(p)\coloneqq\rho\left(p^{-1}\right)$, so that
\begin{equation*}
\qquad\left(\varphi*\check\rho \right)(\tau,p)=\int_{\hh^n}\varphi(\tau,q)\check\rho\left(q^{-1}\cdot p\right)\,dq =\int_{\hh^n}\varphi(\tau,q)\rho\left(p^{-1}\cdot q\right)\,dq \qquad\text{for every $(\tau,p)\in[0,\bar\tau)\times\hh^n$.}
\end{equation*}

\subsection{Horizontal Sobolev spaces and contact vector fields}
As pointed out in the introduction, we will consider vector fields $\bb\in\VfH{n}$ with \emph{horizontal} Sobolev regularity. To this end, fix an open subset $\Om\subseteq\hh^n$ and $u\in L^1_{\mathrm{loc}}(\Om)$. If $j=1,\ldots,2n$, we define the distribution $Z_j u$ by 
\begin{equation*}
\left\langle Z_ju,\varphi\right\rangle=-\int_\Om uZ_j\varphi\,dp \qquad \text{for every $\varphi\in C^\infty_c(\Om)$}.
\end{equation*}
If $s\in[1,\infty]$, we define the \emph{horizontal Sobolev space} $W^{1,s}_\hhh(\Om)$ by
\begin{equation*}
    W^{1,s}_\hhh(\Om)\coloneqq\{u\in L^s(\Om)\,:\,Z_j u\in L^s(\Om)\text{ for every $j=1,\ldots,2n$}\}.
\end{equation*}
The spaces $W^{1,s}_{\hhh,\mathrm{loc}}(\Om)$ and $W^{k,s}_\hhh(\Om)$, for $k=2,3,\ldots$, are defined accordingly. It is well-known (see \cite{MR0657581}) that the vector space $W^{k,s}_\hhh(\Om)$, endowed with the norm
\[
\Vert u\Vert_{W^{k,s}_\hhh(\Om)}\coloneqq\Vert u\Vert_{L^s(\Om)}+\sum_{j=1}^{2n}\|Z_j u\|_{L^s(\Om)}+\cdots+\sum_{j_1,\ldots,j_k=1}^{2n}\|Z_{j_1}\cdots Z_{j_k}u\|_{L^s(\Om)},
\]
is a Banach space for every $1\leq s\leq \infty$ and $k\in\mathbb N \setminus \{0\}$, reflexive when $1<s<\infty$.

The following Meyers-Serrin type approximation result holds (cf.~\cite{MR1404326}, and cf.~\cite{MR1437714} when $k=1$).
\begin{theorem} \label{meyersserrin}
    Let $\Om\subseteq \hh^n$ be an open set. Let $k\in\mathbb N\setminus\{0\}$ and $s\in[1,\infty)$. Let $u\in W^{k,s}_\hhh(\Om)$. There exists a sequence $\{u_h\}_{h\in\mathbb N}\subseteq C^\infty(\Om)\cap W^{k,s}_\hhh(\Om)$ such that 
    \begin{equation*}
        \lim_{h\to\infty }\|u_h-u\|_{W^{k,s}_\hhh(\Om)}=0.
    \end{equation*}
\end{theorem}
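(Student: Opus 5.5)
The plan is to follow the classical Meyers–Serrin scheme, replacing Euclidean mollification with the group convolution of \Cref{prop_group_mollification}. The ingredient that makes this work is \eqref{eq_derivatives_convolution}: left-invariant derivatives commute with convolution $f*\rho_\eps$. Since the horizontal fields $Z_1,\ldots,Z_{2n}$ are left-invariant, we get $Z_{j_1}\cdots Z_{j_k}(u*\rho_\eps)=(Z_{j_1}\cdots Z_{j_k}u)*\rho_\eps$. Because of the order of the factors, this identity has to be justified in the distributional sense, by duality against test functions through \eqref{deficonvdistr}, wherever $u*\rho_\eps$ only sees values of $u$ inside $\Om$. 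Combined with \Cref{prop_group_mollification}, this gives $u*\rho_\eps\to u$ in $W^{k,s}_\hhh$ locally, away from $\partial\Om$.

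The first step is the localization. Take an exhaustion $\Om_i\Subset\Om_{i+1}$ of $\Om$, with $\Om_0=\Om_{-1}=\emptyset$, and a smooth partition of unity $\{\zeta_i\}$ subordinate to the covers $\Om_{i+1}\setminus\overline{\Om_{i-1}}$, so that $\sum_i\zeta_i=1$ on $\Om$. For $\zeta_i\in C^\infty_c$ the horizontal Leibniz rule shows $\zeta_i u\in W^{k,s}_\hhh(\hh^n)$, with support compactly contained in $\Om_{i+1}\setminus\overline{\Om_{i-1}}$. The Leibniz rule is checked by duality: if $u\in W^{k,s}_\hhh(\Om)$ and $\zeta\in C^\infty_c(\Om)$, then $Z_{j}(\zeta u)=\zeta Z_ju+uZ_j\zeta$, and iterating expresses every derivative up to order $k$ as sums of products of derivatives of $\zeta$ and derivatives of $u$ of order at most $k$.

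The second step is to mollify each piece separately. Note that $\mathrm{supp}\big((\zeta_iu)*\rho_\eps\big)\subseteq\mathrm{supp}(\zeta_iu)\cdot B(0,\eps)$, which equals the CC $\eps$-neighborhood $\{p: d(p,\mathrm{supp}(\zeta_i u))<\eps\}$ by left invariance \eqref{eq_CC_distance}. Given $\eta>0$, choose $\eps_i$ so small that:
\begin{itemize}
\item this support stays inside $\Om_{i+2}\setminus\overline{\Om_{i-2}}$;
\item $\|(\zeta_iu)*\rho_{\eps_i}-\zeta_iu\|_{W^{k,s}_\hhh}<\eta2^{-i}$.
\end{itemize}
The second condition is possible by \Cref{prop_group_mollification} applied to each $Z_{j_1}\cdots Z_{j_m}(\zeta_iu)\in L^s(\hh^n)$, together with the commutation identity above, which is now legitimate on all of $\hh^n$ because $\zeta_iu$ is globally defined. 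Smoothness of $(\zeta_iu)*\rho_\eps$ follows by differentiating under the integral in the representation $\int f(q)\rho_\eps(q^{-1}p)\,dq$.

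The third step is to set $v=\sum_i(\zeta_iu)*\rho_{\eps_i}$. The sum is locally finite, since each point lies in at most four of the enlarged annuli, so $v\in C^\infty(\Om)$. Moreover $\|v-u\|_{W^{k,s}_\hhh(\Om)}\le\sum_i\eta2^{-i}\le\eta$, using $u=\sum_i\zeta_iu$ with the sum locally finite. Letting $\eta=1/h$ gives the required sequence $u_h$.

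The main obstacle is the commutation of horizontal derivatives with group convolution for Sobolev, rather than $C^1$, functions. The plan is to verify by Fubini that $\langle Z_j(f*\rho_\eps),\varphi\rangle=-\int f\,(Z_j\varphi)*\check\rho_\eps$, and that $Z_j(\varphi*\check\rho_\eps)=(Z_j\varphi)*\check\rho_\eps$. The latter holds since $\varphi*\check\rho_\eps$ is a superposition of left translates of $\varphi$ and $Z_j$ commutes with left translations. With these two identities, $Z_j(f*\rho_\eps)=(Z_jf)*\rho_\eps$ follows, and iterating gives the higher-order identity.
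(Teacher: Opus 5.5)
There is a genuine gap. Your overall Meyers--Serrin scheme is fine: the exhaustion, the partition of unity, the horizontal Leibniz rule and the locally finite sum all work. However, the step you single out as the main obstacle is resolved incorrectly, and your whole second step rests on it.

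The identity $Z_j(f*\rho_\eps)=(Z_jf)*\rho_\eps$ is false for the group convolution used in the paper. Formula \eqref{eq_derivatives_convolution} moves a left-invariant derivative onto the \emph{second} factor: $Z_j(f*\rho_\eps)=f*Z_j\rho_\eps$. On the other hand, $f*\rho_\eps(p)=\int f(p\cdot q^{-1})\rho_\eps(q)\,dq$ is a superposition of \emph{right} translates of $f$, and left-invariant fields do not commute with right translations. Integrating by parts (cf.\ \Cref{lemscambiarederivateeinversione}) one finds instead $(Z_jf)*\rho_\eps=f*(Z_j^r\rho_\eps)$. Hence, by \eqref{eq_right_vector_fields},
\[
X_j(f*\rho_\eps)-(X_jf)*\rho_\eps=f*\bigl(4y_jT\rho_\eps\bigr),\qquad Y_j(f*\rho_\eps)-(Y_jf)*\rho_\eps=-f*\bigl(4x_jT\rho_\eps\bigr),
\]
and these differences are nonzero in general. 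Your proposed justification fails at the same point. The function $\varphi*\check\rho_\eps(p)=\int\varphi(q)\check\rho_\eps(q^{-1}\cdot p)\,dq$ is a superposition of left translates of $\check\rho_\eps$, that is, of right translates of $\varphi$. So $Z_j(\varphi*\check\rho_\eps)=\varphi*Z_j\check\rho_\eps\neq(Z_j\varphi)*\check\rho_\eps$. This non-commutation is not a technicality. It is exactly the mismatch between $Z_j\rho_\eps$ and $Z_j^r\rho_\eps$ that appears in the commutator of \Cref{prop_formadelresto}, whose control is the core of the paper.

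The repair is to mollify on the other side. Use $\rho_\eps*g$, with $(\rho_\eps*g)(p)=\int\rho_\eps(q)\,g(q^{-1}\cdot p)\,dq$, which really is a superposition of left translates of $g$. Substitute $p=q\cdot w$ and use $Z(\varphi\circ L_q)=(Z\varphi)\circ L_q$. For $g=\zeta_iu\in W^{k,s}_\hhh(\hh^n)$ and $m\le k$ this gives, in the sense of distributions,
\[
Z_{j_1}\cdots Z_{j_m}(\rho_\eps*g)=\rho_\eps*(Z_{j_1}\cdots Z_{j_m}g).
\]
Moreover $\rho_\eps*g\to g$ in $L^s$ by continuity of \emph{left} translations, which has the same proof as \Cref{lem_continuity_translations}.

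One more adjustment is needed. The support of $\rho_\eps*(\zeta_iu)$ lies in $B(0,\eps)\cdot\mathrm{supp}(\zeta_iu)$. This is no longer the Carnot--Carath\'eodory $\eps$-neighbourhood, since conjugation is not an isometry. Still, by compactness of $\mathrm{supp}(\zeta_i)$ and continuity of the group law, it lies in $\Om_{i+2}\setminus\overline{\Om_{i-2}}$ for $\eps$ small, which is all you use. With these changes the rest of your argument goes through.

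Alternatively, you could keep $u*\rho_\eps$ and prove that the extra terms $\pm f*(4x_jT\rho_\eps)$, and their higher-order analogues, tend to zero in $L^s$. That is a genuine additional commutator estimate, and your proposal does not contain it. For reference, the paper itself does not prove this statement; it quotes it from the literature.
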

The commutation relation \eqref{eq_commutation} holds as well in the horizontal Sobolev setting.
\begin{lemma} \label{lem_W21hinW11}
    Let $\Omega\subseteq\hh^n$ be an open subset. Let $s\in[1,\infty]$. Then $W^{2,s}_\hhh(\Omega)\subseteq W^{1,s}(\Omega)$, and 
    \begin{equation*} \label{eq_embedding2sin1s}
        Tu=\frac{1}{4}(Y_jX_j-X_jY_j)u\qquad\text{for every $u\in W^{2,s}_\hhh(\Omega)$, $j=1,\ldots,n$.}
    \end{equation*}
\end{lemma}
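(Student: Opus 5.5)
The idea is to establish the identity first for smooth functions and then pass to the limit with \Cref{meyersserrin}, interpreting $Tu$ in the distributional sense.

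\emph{Smooth case.} For $u\in C^\infty(\Omega)$, the commutation relation \eqref{eq_commutation}, namely $[Y_j,X_j]=4T$, gives pointwise
\[
Tu=\tfrac14\left(Y_jX_ju-X_jY_ju\right)\qquad\text{for every } j=1,\ldots,n .
\]
This could also be checked directly in coordinates from the expressions of $X_j$ and $Y_j$.

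\emph{General case.} Let $u\in W^{2,s}_\hhh(\Omega)$. First suppose $s<\infty$. By \Cref{meyersserrin} there is a sequence $u_h\in C^\infty(\Omega)\cap W^{2,s}_\hhh(\Omega)$ with $u_h\to u$ in $W^{2,s}_\hhh(\Omega)$. In particular $Y_jX_ju_h\to Y_jX_ju$ and $X_jY_ju_h\to X_jY_ju$ in $L^s(\Omega)$, where these are the distributional second horizontal derivatives. Set $g\coloneqq\tfrac14(Y_jX_j-X_jY_j)u\in L^s(\Omega)$. For $\varphi\in C^\infty_c(\Omega)$,
\[
-\int_\Omega u\,T\varphi\,dp=\lim_h\left(-\int_\Omega u_h\,T\varphi\,dp\right)=\lim_h\int_\Omega Tu_h\,\varphi\,dp=\int_\Omega g\,\varphi\,dp,
\]
using the smooth identity together with the $L^s$ convergence. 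So the distribution $Tu$ equals $g\in L^s(\Omega)$. In particular $Tu$ does not depend on $j$, since it is a single distribution.

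\emph{The case $s=\infty$.} This avoids Meyers--Serrin. For $\varphi\in C^\infty_c(\Omega)$, since the $Z_j$ are divergence free (the formal adjoint of $Z_j$ is $-Z_j$), the smooth identity applied to $\varphi$ gives
\[
\langle Tu,\varphi\rangle=-\int_\Omega u\,T\varphi\,dp=-\tfrac14\int_\Omega u\,(Y_jX_j-X_jY_j)\varphi\,dp .
\]
By the definition of the distributional derivatives $Z_iZ_ju$, the right-hand side equals $\tfrac14\langle (Y_jX_j-X_jY_j)u,\varphi\rangle$. This duality argument works for every $s$, so it could replace the approximation step altogether.

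\emph{Membership in $W^{1,s}(\Omega)$.} The Euclidean derivatives are
\[
\partial_{x_j}u=X_ju-2y_jTu,\qquad \partial_{y_j}u=Y_ju+2x_jTu,\qquad \partial_tu=Tu,
\]
and all of these are in $L^s$ once $Tu\in L^s$. The only subtlety is the unbounded coefficients $x_j,y_j$ when $\Omega$ is unbounded. So I would read $W^{1,s}(\Omega)$ as the first-order Sobolev space with respect to the frame $X_1,\ldots,Y_n,T$, i.e.\ the intrinsic gradient $\nabla u$, consistent with the paper's notation. With the Euclidean reading the inclusion holds locally. This is the only point requiring care; the rest is routine.
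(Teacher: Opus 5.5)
Your proposal is correct, and the duality argument in your $s=\infty$ paragraph is exactly the paper's proof: write $T\varphi=\frac14(Y_jX_j-X_jY_j)\varphi$ and integrate by parts twice. As you note, this works for every $s\in[1,\infty]$, so the Meyers--Serrin approximation is an unnecessary detour.

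Your caveat on $W^{1,s}(\Omega)$ is also well taken. The paper's argument only yields $Tu\in L^s(\Omega)$, so the inclusion should be read with respect to the full frame $X_1,\ldots,Y_n,T$. In the Euclidean sense it holds for bounded $\Omega$, hence locally. On unbounded domains it can fail, e.g.\ for suitably weighted sums of left translates of a fixed bump drifting off in the $y_1$-direction, where the term $-2y_1Tu$ in $\partial_{x_1}u$ is not in $L^s$.
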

\begin{proof}
    Let $\varphi\in C^\infty_c(\Omega)$. Let $j=1,\ldots,n$. It suffices to notice that
    \begin{equation*}
        \begin{split}
            \int_\Omega u\,T\varphi \,dp=\frac{1}{4}\int_\Omega u\left(Y_jX_j\varphi-X_jY_j\varphi\right) \,dp
            =\frac{1}{4}\int_\Omega\left(X_j Y_ju-Y_jX_j u\right)\varphi \,dp=-\int_\Omega\frac{1}{4}(Y_jX_j-X_jY_j)u\,\varphi \,dp.
        \end{split}
    \end{equation*}
The thesis follows.
\end{proof}
We employ horizontal Sobolev spaces to introduce vector fields with horizontal Sobolev regularity. We recall that a smooth vector field $\bb\in\VfH{n}$ is called a \emph{contact vector field} if its flow preserves the horizontal distribution $\hhh$ (see \cite{MR1317384}). In this case, $\bb$ admits an explicit representation in terms of a smooth \emph{generating function} $\psi$, namely
\begin{equation} \label{contact_vector_field}
\bb = -4\psi T + \sum_{j=1}^n (Y_j \psi X_j - X_j \psi Y_j) = -4\psi T - \JJ\left(\nabla^\hhh \psi\right),
\end{equation}
where in the second equality we extended the action of $\JJ$ in \eqref{eq_J} also to gradients. Therefore, it makes sense to define contact vector fields with horizontal Sobolev regularity as those representable by a generating function $\psi\in W^{2,s}_\hhh(\hh^n)$ for some $s\in [1,\infty]$ according to \eqref{contact_vector_field}. In this way, \Cref{lem_W21hinW11} implies that $b_N\in W^{1,s}(\rr^N)$. In particular, owing again to 
\Cref{lem_W21hinW11}, it is easy to verify that 
\begin{equation*}
\divv \bb = -4(n+1)T \psi \in L^{s}(\hh^n). 
\end{equation*}

\section{Convergence of difference quotients}\label{sec:diff_quo}
In this section we study the asymptotic behavior of difference quotients of horizontal Sobolev functions. However, differently e.g.~from \cite{MR1459590}, we consider difference quotients along intrinsically dilated left translations rather than along horizontal flows. This fact requires to carefully deal with suitable vertical remainders, as it will be clearer in a while. These results will be fundamental in order to prove the regularization property of the next section. Along this section, $\Om$ is a fixed open set in $\hh^n$.
We begin by recalling the first and second-order behavior of smooth functions along dilated left translations.
\begin{lemma}\label{deriprimasecondasmoothalongdilations}
Let $f\in C^\infty(\Omega)$. Let $p\in\Omega$, $\tau>0$ and $w \in \hh^n$. Assume that $p\cdot\delta_\tau(w)\in\Om$. Then
\begin{equation}\label{derivataalongdilation}
    \frac{d(f(p\cdot\delta_\tau(w)))}{d\tau}=\left\langle\nabla^\hhh f(p\cdot\delta_\tau(w)),w_\hhh\right\rangle+2\tau w_N Tf(p\cdot\delta_\tau(w))
\end{equation}
and
\begin{equation} \label{derivataalongdilation_2nd}
\begin{split}
    \frac{d^2(f(p\cdot\delta_\tau(w)))}{d\tau^2} & = \left\langle\nabla^{2,\hhh} f(p\cdot\delta_\tau(w)) w_\hhh, w_\hhh\right\rangle+2w_N Tf(p\cdot\delta_\tau(w)) \\ & \quad +4\tau w_N\left\langle\nabla^\hhh Tf(p\cdot\delta_\tau(w)), w_\hhh\right\rangle+4\tau^2w_N^2 TTf(p\cdot\delta_\tau(w)).
\end{split}
\end{equation}
\end{lemma}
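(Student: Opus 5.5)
The idea is to compute the velocity of the curve $\gamma(\tau)\coloneqq p\cdot\delta_\tau(w)$, write it in the left-invariant frame, and then apply the chain rule twice. Since $p$ is fixed, $\gamma=L_p\circ\delta_\tau(w)$. We have $\dot\gamma(\tau)=(dL_p)_{\delta_\tau(w)}\left(\frac{d}{d\tau}\delta_\tau(w)\right)$ with $\frac{d}{d\tau}\delta_\tau(w)=(w_\hhh,2\tau w_N)$ in Euclidean coordinates. By left-invariance, it suffices to express the Euclidean vector $(w_\hhh,2\tau w_N)$ at the point $q=\delta_\tau(w)$ in the frame $X_j,Y_j,T$. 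Alternatively, one can differentiate the explicit group law directly: set $q(\tau)=(\tau x,\tau y,\tau^2 t)$ with $w=(x,y,t)$ and $p=(x',y',t')$. The vertical coordinate of $p\cdot q(\tau)$ is then
\[
t'+\tau^2 t+2\tau\sum_j(x_jy'_j-x'_jy_j),
\]
so its $\tau$-derivative is $2\tau t+2\sum_j(x_jy'_j-x'_jy_j)$. On the other hand, at the point $\gamma(\tau)$ the field $\sum_j(x_jX_j+y_jY_j)$ has vertical component $2\sum_j\left(x_j(y'_j+\tau y_j)-y_j(x'_j+\tau x_j)\right)=2\sum_j(x_jy'_j-y_jx'_j)$, since the $\tau$ terms cancel. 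Hence
\[
\dot\gamma(\tau)=\sum_{j=1}^n\left(x_jX_j+y_jY_j\right)(\gamma(\tau))+2\tau w_N\,T(\gamma(\tau)),
\]
which gives \eqref{derivataalongdilation} via $\frac{d}{d\tau}f(\gamma)=df_{\gamma}(\dot\gamma)$.

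For \eqref{derivataalongdilation_2nd}, we differentiate \eqref{derivataalongdilation} once more in $\tau$. We apply the same first-order formula to the smooth functions $g=Z_jf$ ($j\le 2n$) and $g=Tf$, since \eqref{derivataalongdilation} holds for every smooth $g$. The term $\sum_j w_j\,Z_jf(\gamma)$ produces
\[
\sum_{i,j}w_iw_j\,Z_iZ_jf+2\tau w_N\sum_j w_jTZ_jf .
\]
The term $2\tau w_N Tf(\gamma)$ produces
\[
2w_NTf+2\tau w_N\left(\langle\nabla^\hhh Tf,w_\hhh\rangle+2\tau w_NTTf\right).
\]

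To finish, note that $T$ commutes with every $Z_j$ by \eqref{eq_commutation}, so $TZ_jf=Z_jTf$. The two mixed terms therefore combine into $4\tau w_N\langle\nabla^\hhh Tf,w_\hhh\rangle$. The double sum equals $\langle\nabla^{2,\hhh}f\,w_\hhh,w_\hhh\rangle$ by the definition of $\nabla^{2,\hhh}f$; there is no symmetry issue, because the quadratic form only sees the symmetric part. Collecting terms yields \eqref{derivataalongdilation_2nd}.

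The only step that needs care is verifying the velocity formula, namely the cancellation of the $\tau$-terms in the vertical component. Everything else is the chain rule.
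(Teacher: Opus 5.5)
Your proposal is correct and follows essentially the same route as the paper: write $\dot\gamma(\tau)=\sum_{i=1}^{2n} w_iZ_i(\gamma(\tau))+2\tau w_N T(\gamma(\tau))$ in the left-invariant frame, apply the chain rule twice, and use $[T,Z_j]=0$ to merge the two mixed terms into $4\tau w_N\langle\nabla^\hhh Tf,w_\hhh\rangle$. The only cosmetic difference is how the velocity formula is verified: you differentiate the group law in coordinates and check that the $\tau$-terms cancel, whereas the paper expresses $\frac{d}{d\tau}\delta_\tau(w)$ in the frame at $\delta_\tau(w)$ and pushes it forward by $dL_p$.
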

\begin{proof}
Observe that, for every $\tau>0$,
\begin{equation*}
\frac{d(\delta_\tau(w))}{d\tau}=\sum_{i=1}^{2n}w_iZ_i(\delta_\tau(w))+2\tau w_N T(\delta_\tau(w)).
\end{equation*}
Hence, setting $\gamma(\tau)=p \cdot \delta_\tau(w)=L_p(\delta_\tau(w))$ for every $\tau>0$, we obtain, by left-invariance,
\begin{equation*}
\begin{split}
    \dot{\gamma}(\tau) & =\left(dL_p\right)_{\delta_\tau(w)}\left(\frac{d(\delta_\tau(w))}{d\tau}\right)\\
    & =\sum_{i=1}^{2n} w_i\left(dL_p\right)_{\delta_\tau(w)}(Z_i(\delta_\tau(w)))+2\tau w_N \left(dL_p\right)_{\delta_\tau(w)}(T(\delta_\tau(w)))\\
    & =\sum_{i=1}^{2n} w_iZ_i(\gamma(\tau))+2\tau w_N T(\gamma(\tau)).
\end{split}
\end{equation*}
Having this, the two formulas follow from a direct computation. First,
\begin{equation*}
\begin{split}
    \frac{d(f(p\cdot\delta_\tau(w)))}{d\tau} & = \sum_{i=1}^{2n} Z_i f(p \cdot \delta_\tau(w)) w_i +2\tau Tf(p \cdot \delta_\tau(w)) w_N\\
    & = \left\langle \nabla^\hhh f(p \cdot \delta_\tau(w)), w_\hhh \right\rangle +2\tau w_N Tf(p \cdot \delta_\tau(w)).
\end{split}
\end{equation*}
Furthermore,
\begin{equation*}
\begin{split}
    \frac{d^2(f(p\cdot\delta_\tau(w)))}{d\tau^2} & = \sum_{i,j=1}^{2n} Z_j Z_i f(p \cdot \delta_\tau(w)) w_i w_j +2\tau \sum_{i=1}^{2n} T Z_i f(p \cdot \delta_\tau(w)) w_i w_N \\ & \quad +2Tf(p \cdot \delta_\tau(w)) w_N +2\tau w_N \sum_{i=1}^{2n} Z_i Tf(p \cdot \delta_\tau(w)) w_i + 4\tau^2 w_N^2 TTf(p \cdot \delta_\tau(w))\\
    & = \left\langle\nabla^{2,\hhh} f(p\cdot\delta_\tau(w)) w_\hhh, w_\hhh\right\rangle+2w_N Tf(p\cdot\delta_\tau(w)) \\ & \quad +4\tau w_N\left\langle\nabla^\hhh Tf(p\cdot\delta_\tau(w)), w_\hhh\right\rangle+4\tau^2w_N^2 TTf(p\cdot\delta_\tau(w)).
\end{split}
\end{equation*}
\end{proof}
The next result shows that vertical difference quotients disappear in the limit.
\begin{lemma} \label{lem_horizontal_vs_vertical}
Let $s \in [1,\infty)$ and let $f \in W_\hhh^{1,s}(\Omega)$. Let $A\subseteq\Om$ be open and such that $\mathrm{dist}(A,\partial\Om)>0$. Then
\begin{equation}\label{eq:priori1}
\lim_{\varepsilon \searrow 0}^{L^s(A)} \frac{f(p \cdot \delta_\varepsilon(w)) - f(p \cdot \delta_\varepsilon(w_\hhh))}{\varepsilon} = 0 \qquad \text{for every $w \in \hh^n$.}
\end{equation}
In addition, if $w\in\hh^n$ and $\varepsilon>0$ satisfy 
\begin{equation}\label{condepsw}
    \varepsilon \left(|w_\hhh|+2\sqrt{|w_N|}\right)<\mathrm{dist}(A,\partial\Om),
\end{equation}
one has
\begin{equation}\label{eq:priori2}
   \left\| \frac{f(p \cdot \delta_\varepsilon(w)) - f(p\cdot\delta_\varepsilon(w_\hhh))}{\varepsilon}\right\|_{L^s(A)}\leq 2\sqrt{|w_N|}\left\|\nabla^\hhh f\right\|_{L^s(\Om)}.
\end{equation}
\end{lemma}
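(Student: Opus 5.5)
Since $\delta_\varepsilon(w)=\delta_\varepsilon(w_\hhh,0)\cdot\delta_\varepsilon(0,w_N)$ (the vertical part is central), we have $p\cdot\delta_\varepsilon(w)=q\cdot(0,\varepsilon^2 w_N)$ with $q=p\cdot\delta_\varepsilon(w_\hhh)$. So the quantity to control is a purely vertical increment of height $\varepsilon^2 w_N$, evaluated at the point $q$, which is a right-translate of $p$. I will first prove the estimate \eqref{eq:priori2} for smooth $f$. Then I will extend it to $W^{1,s}_\hhh$ by the Meyers--Serrin approximation of \Cref{meyersserrin}, and finally deduce \eqref{eq:priori1} by the density argument used in \Cref{lem_continuity_translations}.

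\textbf{Key step (smooth case).} The vertical displacement $(0,\varepsilon^2 w_N)$ can be written as a commutator of horizontal moves. Choose $h=\sqrt{|w_N|}/2$, say with $w_N>0$ (the other sign is symmetric). Then, by the group law,
\[
(0,4h^2\varepsilon^2)=\delta_\varepsilon\bigl((-h e_{n+1})\cdot(-h e_1)\cdot(h e_{n+1})\cdot(h e_1)\bigr),
\]
up to the exact choice of order and signs so that the $t$-component equals $w_N$. This is a horizontal path made of four segments, each of length $\varepsilon h$, so its total length is $4\varepsilon h=2\varepsilon\sqrt{|w_N|}$. Along each segment $f$ changes by $\int_0^{\varepsilon h}\pm Z_j f(\cdot\,\exp(\sigma Z_j))\,d\sigma$. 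Taking $L^s(A)$ norms and applying Minkowski's integral inequality together with right-invariance of Lebesgue measure \eqref{eq_Lebesgue_Haar}, each segment contributes at most $\varepsilon h\|\nabla^\hhh f\|_{L^s(\Om)}$. The starting points involved are right translates of $p\in A$ by elements at distance at most $\varepsilon(|w_\hhh|+2\sqrt{|w_N|})$. Because $d(p,p\cdot g)=d(0,g)$, condition \eqref{condepsw} guarantees that every point on the path stays in $\Om$. Summing the four contributions and dividing by $\varepsilon$ gives \eqref{eq:priori2}, because $4h=2\sqrt{|w_N|}$.

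\textbf{Approximation and limit.} For general $f$, take $f_h\in C^\infty(\Om)$ converging to $f$ in $W^{1,s}_\hhh(\Om)$. For fixed $\varepsilon$, the left side of \eqref{eq:priori2} depends continuously on $f$ in $L^s$, since it is built from compositions with measure-preserving right translations. The estimate therefore passes to the limit.

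For \eqref{eq:priori1}, the estimate is uniform in $\varepsilon$ small. It thus suffices to prove convergence to $0$ for a dense class, and the density argument does the rest. For $f$ smooth on a neighborhood of $\overline{A}$ with bounded $T f$, the mean value theorem gives $|f(q\cdot(0,\varepsilon^2w_N))-f(q)|\le \varepsilon^2|w_N|\sup|Tf|$, so the quotient is $O(\varepsilon)$. The technical point is that Meyers--Serrin approximants need not have bounded $Tf$ up to $\partial\Om$. I will handle this by working on an intermediate open set $A\Subset A'$ with $\mathrm{dist}(A',\partial\Om)>0$, or by cutting off and taking a group mollification of $f$ localized near $A$. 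In either case the density then holds in $W^{1,s}_\hhh$ of a neighborhood of $\overline A$.

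The main obstacle is the key step: choosing the commutator path with the correct signs and bookkeeping that all intermediate points lie in $\Om$ under \eqref{condepsw}, since the path is traversed starting from $q$, not from $p$.
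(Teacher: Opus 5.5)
\textbf{Gap: your proof of \eqref{eq:priori1} only works for bounded $A$.}

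Your proof of \eqref{eq:priori2} is the paper's. It uses the same four-segment commutator path along $Y_1,X_1,-Y_1,-X_1$, each segment of length $\frac{\varepsilon}{2}\sqrt{|w_N|}$, starting from $p\cdot\delta_\varepsilon(w_\hhh)$. Then come Minkowski's integral inequality, right-invariance of $\mathcal L^N$, and Meyers--Serrin.

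You diverge on \eqref{eq:priori1}. The statement allows any open $A$ with $\mathrm{dist}(A,\partial\Om)>0$, e.g.\ $\Om=\hh^n\setminus\overline{B(0,1)}$ and $A=\hh^n\setminus\overline{B(0,2)}$. Two steps break for such $A$:
\begin{itemize}
\item Choosing the dense class on an intermediate set $A\Subset A'$ already presupposes that $\overline A$ is compact.
\item The pointwise bound $|f(q\cdot(0,\varepsilon^2w_N))-f(q)|\le\varepsilon^2|w_N|\sup|Tf|$ becomes an $L^s(A)$ bound only through the factor $|A|^{1/s}$. This factor is infinite for unbounded $A$, and $\sup|Tf|$ need not be finite there either.
\end{itemize}
For bounded $A$ your route is correct. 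Meyers--Serrin approximants $f_h$ have $Tf_h$ bounded on $\overline{A'}$, and the vertical segment stays in $A'$ for small $\varepsilon$. Applying \eqref{eq:priori2} to $f-f_h$ on the pair $(A,A')$ then closes the density argument.

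\textbf{How the paper avoids this.} The paper gets \eqref{eq:priori1} without any vertical derivative, by pairing the four increments. The two $X_1$-segments, and likewise the two $Y_1$-segments, are traversed in opposite directions, so
\[
f(p\cdot\delta_\varepsilon(w))-f(p\cdot\delta_\varepsilon(w_\hhh))=\int_0^{\tilde\tau}\left[X_1f(p\cdot g_1(\tau))-X_1f(p\cdot g_2(\tau))\right]d\tau+\bigl(\text{analogous $Y_1$-term}\bigr),
\]
with $\tilde\tau=\frac{\varepsilon}{2}\sqrt{|w_N|}$ and $d(0,g_i(\tau))\le C\varepsilon$. Since $g_2(\tau)^{-1}g_1(\tau)\to 0$, \Cref{lem_continuity_translations} applied to $\nabla^\hhh f$ (extended by zero) makes each integrand $o(1)$ in $L^s(A)$. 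Hence the difference is $o(\varepsilon)$ in $L^s(A)$ for every $f\in C^\infty(\Om)\cap W^{1,s}_\hhh(\Om)$ and every admissible $A$, bounded or not. Density then finishes as you planned.

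\textbf{How to repair your route instead.} Add a tail estimate. Split $A$ into $A\cap B(0,R)$, handled by your bounded argument, and $A\setminus\overline{B(0,R)}$. The tail piece satisfies \eqref{eq:priori2} with $\Om\setminus\overline{B(0,R-1)}$ in place of $\Om$, since its distance to the new boundary is at least $\min\{\mathrm{dist}(A,\partial\Om),1\}>0$. This gives the bound $2\sqrt{|w_N|}\,\|\nabla^\hhh f\|_{L^s(\Om\setminus B(0,R-1))}$, which is small for large $R$ uniformly in $\varepsilon$.
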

\begin{proof}
It is sufficient to show that both properties hold when $f\in C^\infty(\Om)\cap W^{1,s}_\hhh(\Om)$: indeed, if this holds, then \eqref{eq:priori2} extends to
all $f\in  W_\hhh^{1,s}(\Om)$ thanks to \Cref{meyersserrin} (notice that the argument of the limit on the left hand side of \eqref{eq:priori1} is $L^s$-continuous
when $\varepsilon$ is fixed). Then, the operators in the left hand side result uniformly bounded in $W_\hhh^{1,s}(\Om)$, and from the
density of $C^\infty(\Om)\cap W^{1,s}_\hhh(\Om)$ in $W_\hhh^{1,s}(\Om)$ one obtains the validity of \eqref{eq:priori1}.
Fix $f\in C^\infty(\Om)\cap W^{1,s}_\hhh(\Om)$. Fix $p\in A$. If $w_N=0$, there is nothing to prove. Assume that $w_N>0$. Assume that $\varepsilon>0$ satisfies \eqref{condepsw}. By definition of $d$, $d\left( p,p\cdot\delta_\eps(w_\hhh)\right)\leq\eps|w_\hhh|$, whence $p\cdot\delta_\eps(w_\hhh)\in \Om$ by \eqref{condepsw}. We exhibit a horizontal curve joining $p\cdot\delta_\varepsilon(w_\hhh)$ and $p\cdot \delta_\varepsilon(w)$: start from $p\cdot \delta_\varepsilon(w_\hhh)$, follow the flow of $Y_1$ and then, in order, those of $X_1$, $-Y_1$ and $-X_1$, with time $\tilde{\tau} = \frac{\varepsilon}{2} \sqrt{w_N}$ at each step. By the Baker-Campbell-Hausdorff formula (cf.~\cite{MR746308}),
\begin{equation*}\begin{split}
    \exp(\tilde \tau Y_1)&\cdot\exp (\tilde \tau X_1)\cdot\exp (-\tilde\tau Y_1)\cdot\exp(-\tilde \tau X_1)\\
    &=\exp\left(\tilde \tau Y_1+\tilde \tau X_1+\frac{\tilde \tau^2}{2}[Y_1,X_1]\right)\cdot \exp\left(-\tilde \tau Y_1-\tilde \tau X_1+\frac{\tilde \tau^2}{2}[Y_1,X_1] \right)\\
    &=\exp (\tilde\tau^2[Y_1,X_1])\\
    &=\exp(\varepsilon ^2w_N T).
\end{split}\end{equation*}
The case $w_N<0$ could be handled similarly. In particular, if $q$ is any point along the above curve,
\begin{equation*}
    d(p,q)\leq d(p,p\cdot\delta_\eps(w_\hhh))+d(p\cdot\delta_\eps(w_\hhh),q)\leq\eps\left(|w_\hhh|+2\sqrt{|w_N|}\right),
\end{equation*}
whence $q\in\Om$ by \eqref{condepsw}.
Set
\begin{equation*}\begin{split}
    \bar{w}_0 & = \delta_\varepsilon(w_\hhh),\\
    \bar{w}_1 & =\bar{w}_0 \cdot \exp(\tilde{\tau} Y_1),\\
    \bar{w}_2 & =\bar{w}_1 \cdot \exp(\tilde{\tau} X_1),\\
    \bar{w}_3 & =\bar{w}_2 \cdot \exp(-\tilde{\tau} Y_1),\\
    \bar{w}_4 & =\bar{w}_3 \cdot \exp(-\tilde{\tau} X_1)=\delta_\varepsilon(w).
\end{split}\end{equation*}
Then
\begin{equation*}
\begin{split}
    f(p \cdot \delta_\varepsilon(w))-f(p \cdot \delta_\varepsilon(w_\hhh)) & = [f(p \cdot \bar{w}_4)-f(p \cdot \bar{w}_3)+f(p \cdot \bar{w}_2)-f(p \cdot \bar{w}_1)]\\ & \quad + [f(p \cdot \bar{w}_3)-f(p \cdot \bar{w}_2)+f(p \cdot \bar{w}_1)-f(p \cdot \bar{w}_0)]\\
    & =\int_0^{\tilde{\tau}} [-X_1f(p \cdot \bar{w}_3 \cdot \exp(-\tau X_1))+X_1 f(p \cdot \bar{w}_1 \cdot \exp(\tau X_1))] \,d\tau \\ & \quad + \int_0^{\tilde{\tau}} [-Y_1 f(p \cdot \bar{w}_2 \cdot \exp(-\tau Y_1))+Y_1 f(p \cdot \bar{w}_0 \cdot \exp(\tau Y_1))] \,d\tau.
\end{split}
\end{equation*}
 Computing the $L^s$-norm, Minkowski's integral inequality (cf.~\cite[Corollary B.83]{MR3726909}) implies
\begin{equation} \label{eq_stiaallimite}
\begin{split}
    & \left(\int_{A} \abs*{f(p \cdot \delta_\varepsilon(w))-f(p \cdot \delta_\varepsilon(w_\hhh))}^s \,dp\right)^{\frac{1}{s}}\\
    &\quad \leq \int_0^{\tilde{\tau}} \left(\int_{A} \abs*{X_1 f(p \cdot \bar{w}_3 \cdot \exp(-\tau X_1))-X_1 f(p \cdot \bar{w}_1 \cdot \exp(\tau X_1))}^s \,dp\right)^{\frac{1}{s}} \,d\tau \\ & \qquad + \int_0^{\tilde{\tau}} \left(\int_{A} \abs*{Y_1 f(p \cdot \bar{w}_2 \cdot \exp(-\tau Y_1))-Y_1 f(p \cdot \bar{w}_0 \cdot \exp(\tau Y_1))}^s \,dp\right)^{\frac{1}{s}} \,d\tau.
\end{split}
\end{equation}
Dividing \eqref{eq_stiaallimite} by $\varepsilon$, since $\tilde\tau=O(\varepsilon)$ and $f\in C^\infty(\Om)\cap W^{1,s}_\hhh(\Om)$, \Cref{lem_continuity_translations} implies \eqref{eq:priori1}.
 In order to prove \eqref{eq:priori2}, just use the invariance of $\mathcal{L}^N$ under right translations and the explicit formula of $\tilde\tau$. 
The thesis follows.
\end{proof}

We are ready to prove the main convergence results for first-order difference quotients. 
\begin{theorem} \label{thm_convergence_diff_quot}
Let $s \in [1,\infty)$ and let $\Omega \subseteq\hh^n$ be open. Let $f \in W_\hhh^{1,s}(\Omega)$. Let $A\subseteq\Om$ be open and such that $\mathrm{dist}(A,\partial\Om)>0$. Then
\begin{equation}\label{eq:priori3}
\lim_{\varepsilon \searrow 0}^{L^s(A)} \frac{f(p \cdot \delta_\varepsilon(w)) - f(p)}{\varepsilon} = \left\langle \nabla^\hhh f(p), w_\hhh \right\rangle \qquad \text{for every $w\in\hh^n$.}
\end{equation}
In addition, if $\eps>0$ and $w\in\hh^n$ satisfy \eqref{condepsw}, one has
\begin{equation}\label{eq:priori4}
\left\| \frac{f(p \cdot \delta_\varepsilon(w)) - f(p)}{\varepsilon}\right\|_{L^s(A)}\leq \left (|w_\hhh|+2\sqrt{w_N}\right)\left\|\nabla^\hhh f\right\|_{L^s(\Om)}.
\end{equation}
\end{theorem}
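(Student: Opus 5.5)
Split the difference quotient as
\begin{equation*}
\frac{f(p \cdot \delta_\varepsilon(w)) - f(p)}{\varepsilon}=\frac{f(p \cdot \delta_\varepsilon(w)) - f(p\cdot\delta_\varepsilon(w_\hhh))}{\varepsilon}+\frac{f(p \cdot \delta_\varepsilon(w_\hhh)) - f(p)}{\varepsilon}.
\end{equation*}
By \Cref{lem_horizontal_vs_vertical}, the first term tends to $0$ in $L^s(A)$. Under \eqref{condepsw} its norm is bounded by $2\sqrt{|w_N|}\,\|\nabla^\hhh f\|_{L^s(\Om)}$. So everything reduces to the purely horizontal quotient, with $w=(w_\hhh,0)$.

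For the horizontal part, first take $f\in C^\infty(\Om)\cap W^{1,s}_\hhh(\Om)$. Applying \eqref{derivataalongdilation} with $w_N=0$ and integrating in $\tau$ gives
\begin{equation*}
\frac{f(p \cdot \delta_\varepsilon(w_\hhh)) - f(p)}{\varepsilon}=\int_0^1\left\langle\nabla^\hhh f(p\cdot\delta_{\sigma\varepsilon}(w_\hhh)),w_\hhh\right\rangle d\sigma.
\end{equation*}
This is legitimate for $p\in A$, since the curve $\sigma\mapsto p\cdot\delta_{\sigma\varepsilon}(w_\hhh)$ is horizontal of length $\varepsilon|w_\hhh|$ and therefore stays inside $\Om$ by \eqref{condepsw}. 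We then subtract $\langle\nabla^\hhh f(p),w_\hhh\rangle$ and apply Minkowski's integral inequality. The $L^s(A)$ norm of the difference is controlled by
\begin{equation*}
|w_\hhh|\sup_{\sigma\in[0,1]}\left\|\nabla^\hhh f\circ R_{\delta_{\sigma\varepsilon}(w_\hhh)}-\nabla^\hhh f\right\|_{L^s(A)}.
\end{equation*}
This tends to $0$ by \Cref{lem_continuity_translations}, applied to $\nabla^\hhh f\,\mathbf 1_\Om$ extended by zero. The same representation and the right invariance of Lebesgue measure \eqref{eq_Lebesgue_Haar} give the bound $\|\cdot\|_{L^s(A)}\le |w_\hhh|\,\|\nabla^\hhh f\|_{L^s(\Om)}$. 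Adding the vertical bound from \Cref{lem_horizontal_vs_vertical} yields \eqref{eq:priori4} for smooth $f$; the statement writes $\sqrt{w_N}$, which we read as $\sqrt{|w_N|}$.

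Finally, we extend to general $f\in W^{1,s}_\hhh(\Om)$ by \Cref{meyersserrin}. For fixed $\varepsilon$ both sides of \eqref{eq:priori4} are continuous in $f$ with respect to the $W^{1,s}_\hhh$ norm, the left side being the $L^s$ norm of a difference of right translates. Hence \eqref{eq:priori4} passes to the limit along the approximating sequence. The convergence \eqref{eq:priori3} then follows by a standard $3\epsilon$ argument. For $\varepsilon$ small (so that \eqref{condepsw} holds), the operators $f\mapsto \varepsilon^{-1}(f(\cdot\,\delta_\varepsilon(w))-f)-\langle\nabla^\hhh f,w_\hhh\rangle$ are uniformly bounded from $W^{1,s}_\hhh(\Om)$ to $L^s(A)$ by \eqref{eq:priori4}, and they converge to $0$ on the dense subspace $C^\infty(\Om)\cap W^{1,s}_\hhh(\Om)$. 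Uniform boundedness plus convergence on a dense subset gives convergence everywhere.

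The only delicate point is the domain issue: translates must remain in $\Om$, and \Cref{lem_continuity_translations} is stated on all of $\hh^n$. We handle it by working with $\nabla^\hhh f\,\mathbf 1_\Om$ on $\hh^n$ and using \eqref{condepsw} to ensure that the evaluation points lie in $\Om$. Everything else is routine.
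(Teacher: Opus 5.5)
Your proposal is correct and takes essentially the same route as the paper: you reduce to the horizontal quotient via \Cref{lem_horizontal_vs_vertical}, write that quotient using the fundamental theorem of calculus along $\tau\mapsto p\cdot\delta_\tau(w_\hhh)$, and then combine Minkowski's inequality with \Cref{lem_continuity_translations} and the right-invariance of $\mathcal L^N$. You also pass to general $f$ exactly as the paper does, via \Cref{meyersserrin} plus uniform boundedness and density; along the way you are more explicit than the paper about three points: invoking \eqref{eq:priori1} for the vertical part, extending $\nabla^\hhh f$ by zero outside $\Om$, and reading $\sqrt{w_N}$ as $\sqrt{|w_N|}$.
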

\begin{proof}
As in the previous proof, it suffices to assume $f\in C^\infty(\Om)\cap W^{1,s}_\hhh(\Om)$. If $p\in A$ and $\eps>0$ satisfies \eqref{condepsw}, one has
\begin{equation*}
f(p \cdot \delta_\varepsilon(w_\hhh))-f(p)-\varepsilon \left\langle \nabla^\hhh f(p), w_\hhh \right\rangle \overset{\eqref{derivataalongdilation}}{=} \int_0^\varepsilon \left[\left\langle \nabla^\hhh f(p \cdot \delta_\tau(w_\hhh)), w_\hhh \right\rangle - \left\langle \nabla^\hhh f(p), w_\hhh \right\rangle\right] \,d\tau.
\end{equation*}
Dividing by $\varepsilon$, computing the $L^s$-norm and using Minkowski's integral inequality, we get
\begin{equation}\label{eq:priori5}
\begin{split}
    & \left(\int_{A} \left\vert \frac{f(p \cdot \delta_\varepsilon(w_\hhh))-f(p)}{\varepsilon} - \left\langle \nabla^\hhh f(p), w_\hhh \right\rangle \right\vert^s \,dp\right)^{\frac{1}{s}} \\
    &\quad \leq \frac{1}{\varepsilon} \int_0^\varepsilon \left(\int_{A} \abs*{\left\langle \nabla^\hhh f(p \cdot \delta_\tau(w_\hhh))-\nabla^\hhh f(p), w_\hhh \right\rangle}^s \,dp\right)^{\frac{1}{s}} \,d\tau.
\end{split}
\end{equation}
 By \Cref{lem_continuity_translations}, this clearly implies \eqref{eq:priori3}. Then, \eqref{eq:priori4} follows combining
\eqref{eq:priori2} with
\begin{equation}
\left\| \frac{f(p \cdot \delta_\varepsilon(w_\hhh)) - f(p)}{\varepsilon}\right\|_{L^s(A)}\leq |w_\hhh|\left\|\nabla^\hhh f\right\|_{L^s(\Om)}.
\end{equation}
In turn, this inequality can be obtained from the same argument leading to \eqref{eq:priori5}, starting from
\begin{equation*}
f(p \cdot \delta_\varepsilon(w_\hhh))-f(p)=\int_0^\varepsilon \left\langle \nabla^\hhh f(p \cdot \delta_\tau(w_\hhh)), w_\hhh \right\rangle\,d\tau
\end{equation*}
and using the invariance of $\mathcal{L}^N$ under right translations.
\end{proof}
Next, we deal with second-order difference quotients. 
We first compute the contribution of vertical difference quotients. 
\begin{lemma} \label{lem_horizontal_vs_vertical_2nd}
Let $s \in [1,\infty)$ and let $f \in W_\hhh^{2,s}(\Omega)$. Let $A\subseteq\Om$ be open and such that $\mathrm{dist}(A,\partial\Om)>0$. Then
\begin{equation}\label{eq:priori6}
\lim_{\varepsilon \searrow 0}^{L^s(A)} \frac{f(p \cdot \delta_\varepsilon(w)) - f(p \cdot \delta_\varepsilon(w_\hhh))}{\varepsilon^2} = w_N Tf(p)
 \qquad \text{for every $w \in \hh^n$.}
\end{equation}
In addition, if $\varepsilon>0$ and $w\in\hh^n$ satisfy \eqref{condepsw}, one has
\begin{equation}\label{eq:priori7}
\left\| \frac{f(p \cdot \delta_\varepsilon(w))-f(p \cdot \delta_\varepsilon(w_\hhh))}{\varepsilon^2}\right\|_{L^s(A)}\leq
|w_N|\|Tf\|_{L^s(\Om)}.
\end{equation}
\end{lemma}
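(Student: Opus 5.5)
By the same density argument used for \Cref{lem_horizontal_vs_vertical}, it suffices to treat $f\in C^\infty(\Om)\cap W^{2,s}_\hhh(\Om)$. Indeed, \Cref{meyersserrin} with $k=2$ gives density. Moreover, for fixed $\eps$ the quotient in \eqref{eq:priori6} is $L^s$-continuous in $f$, and by \Cref{lem_W21hinW11} the right-hand side of \eqref{eq:priori7} is controlled by the $W^{2,s}_\hhh$ norm. Hence the bound \eqref{eq:priori7}, once proved for smooth $f$, passes to all $f\in W^{2,s}_\hhh(\Om)$. The uniform bound then upgrades convergence on a dense class to convergence everywhere.

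For smooth $f$ I would not use the four-step commutator curve of \Cref{lem_horizontal_vs_vertical}. Instead I would use the group structure directly. Since $(w_\hhh,0)\cdot(0,w_N)=w$ (the correction term in the group law vanishes when one factor has zero horizontal part), and $\delta_\eps$ is a group isomorphism, we have $\delta_\eps(w)=\delta_\eps(w_\hhh)\cdot\delta_\eps(0,w_N)=\delta_\eps(w_\hhh)\cdot\exp(\eps^2w_NT)$. Therefore
\begin{equation*}
f(p\cdot\delta_\eps(w))-f(p\cdot\delta_\eps(w_\hhh))=\int_0^{\eps^2 w_N}Tf\left(p\cdot\delta_\eps(w_\hhh)\cdot\exp(\sigma T)\right)d\sigma,
\end{equation*}
with the orientation of the integral reversed when $w_N<0$. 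The points involved stay in $\Om$ for $p\in A$. This uses $d(p\cdot\delta_\eps(w_\hhh),p\cdot\delta_\eps(w_\hhh)\cdot\exp(\sigma T))\le 2\sqrt{|\sigma|}\le 2\eps\sqrt{|w_N|}$, which is exactly the estimate obtained from the curve in \Cref{lem_horizontal_vs_vertical}, together with \eqref{condepsw}.

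Dividing by $\eps^2$ and applying Minkowski's integral inequality gives
\begin{equation*}
\left\|\frac{f(p\cdot\delta_\eps(w))-f(p\cdot\delta_\eps(w_\hhh))}{\eps^2}-w_NTf(p)\right\|_{L^s(A)}\le\frac{1}{\eps^2}\left|\int_0^{\eps^2w_N}\left\|Tf\circ R_{\delta_\eps(w_\hhh)\cdot\exp(\sigma T)}-Tf\right\|_{L^s(A)}d\sigma\right|.
\end{equation*}
Here $Tf\in L^s(\Om)$ by \Cref{lem_W21hinW11}, and the right translations involve points tending to $0$. So \Cref{lem_continuity_translations}, applied to $Tf$ extended by zero outside $\Om$ (admissible because all the translated points from $A$ remain in $\Om$), gives \eqref{eq:priori6}. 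For \eqref{eq:priori7}, I would drop the subtraction and use the right invariance \eqref{eq_Lebesgue_Haar}, which gives the bound $\frac{1}{\eps^2}\,\eps^2|w_N|\,\|Tf\|_{L^s(\Om)}$.

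The main obstacle is bookkeeping rather than substance. One must verify the factorization $\delta_\eps(w)=\delta_\eps(w_\hhh)\cdot\exp(\eps^2w_NT)$ with the paper's group law, check that the translated domains stay inside $\Om$, and make sure the density step is legitimate. The last point needs $Tf$ to be controlled by the $W^{2,s}_\hhh$ norm, which \Cref{lem_W21hinW11} provides through $Tf=\frac14(Y_1X_1-X_1Y_1)f$.
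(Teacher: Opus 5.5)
Your proposal is correct and follows essentially the paper's proof. The paper also uses the factorization $\delta_\varepsilon(w)=\delta_\varepsilon(w_\hhh)\cdot\delta_\varepsilon(\exp(w_NT))$ and integrates $Tf$ along the vertical segment, parametrized as $\tau\mapsto\delta_\tau(\exp(w_NT))$ with $\tau\in[0,\varepsilon]$ (your $\sigma=\tau^2w_N$), and it concludes with Minkowski's inequality, \Cref{lem_continuity_translations} and right-invariance of $\mathcal L^N$. Your explicit checks of the density step via \Cref{lem_W21hinW11}, and of the fact that the vertical segment stays in $\Omega$ under \eqref{condepsw}, spell out what the paper leaves implicit.
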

\begin{proof}
Again, it suffices to assume $f\in C^\infty(\Om)\cap W^{2,s}_\hhh(\Om)$. We know that
\begin{equation}\label{eq:priori9}
f(p \cdot \delta_\varepsilon(w))-f(p \cdot \delta_\varepsilon(w_\hhh)) = 2w_N \int_0^\varepsilon \tau Tf(p \cdot \delta_\varepsilon(w_\hhh) \cdot \delta_\tau(\exp(w_N T))) \,d\tau.
\end{equation}
Hence,
\begin{equation*}\begin{split}
    \frac{f(p \cdot \delta_\varepsilon(w))-f(p \cdot \delta_\varepsilon(w_\hhh))}{\varepsilon^2} - w_N Tf(p) = \frac{2w_N}{\varepsilon^2} \int_0^\varepsilon \tau [Tf(p \cdot \delta_\varepsilon(w_\hhh) \cdot \delta_\tau(\exp(w_N T)))-Tf(p)] \,d\tau.
\end{split}\end{equation*}
Computing the $L^s$-norm and applying Minkowski's integral inequality, we obtain
\begin{equation}\label{eq:priori8}
\begin{split}
    & \left(\int_{A} \left\vert \frac{f(p \cdot \delta_\varepsilon(w))-f(p \cdot \delta_\varepsilon(w_\hhh))}{\varepsilon^2} - w_N Tf(p)\right\vert^s \,dp\right)^{\frac{1}{s}}\\
    & \quad\leq \frac{2 \abs*{w_N}}{\varepsilon^2} \int_0^\varepsilon \tau \left( \int_{A}\abs*{Tf(p \cdot \delta_\varepsilon(w_\hhh) \cdot \delta_\tau(\exp(w_N T)))-Tf(p)}^s \,dp\right)^{\frac{1}{s}}\,d\tau.
\end{split}
\end{equation}
This and \Cref{lem_continuity_translations} clearly imply \eqref{eq:priori6}. Then, \eqref{eq:priori7} follows
by the same argument leading to \eqref{eq:priori8}, starting from \eqref{eq:priori9}
and using the invariance of $\mathcal{L}^N$ under right translations.
\end{proof}
The next result describes the asymptotic behavior of second-order difference quotients.

\begin{theorem} \label{thm_convergence_2nd_diff_quot}
Let $s \in [1,\infty)$ and let $\Omega \subseteq \hh^n$ be open. Let $f \in W_\hhh^{2,s}(\Omega)$. Let $A\subseteq\Om$ be open and such that $\mathrm{dist}(A,\partial\Om)>0$. Then
\begin{equation}\label{eq:priori10}
\lim_{\varepsilon \searrow 0}^{L^s(A)} \frac{f(p \cdot \delta_\varepsilon(w)) - f(p) - \varepsilon \left\langle \nabla^\hhh f(p), w_\hhh \right\rangle}{\varepsilon^2} = \frac{1}{2} \left\langle \nabla^{2,\hhh} f(p) w_\hhh, w_\hhh \right\rangle + w_N Tf(p) 
\end{equation}
for every $w \in \hh^n$. In addition, if $\varepsilon>0$ and $w\in\hh^n$ satisfy \eqref{condepsw}, one has
\begin{equation}\label{eq:priori11}
   \left\| \frac{f(p \cdot \delta_\varepsilon(w)) - f(p)-\varepsilon \left\langle \nabla^\hhh f(p), w_\hhh \right\rangle}{\varepsilon^2}\right\|_{L^s(A)}\leq|w_N|\|Tf\|_{L^s(\Om)}+|w_\hhh|^2\left\|\nabla^{\hhh,2}f\right\|_{L^s(\Om)}.
\end{equation}
\end{theorem}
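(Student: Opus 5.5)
We split the second-order quotient into a vertical part and a purely horizontal part:
\begin{equation*}
\frac{f(p\cdot\delta_\varepsilon(w))-f(p)-\varepsilon\langle\nabla^\hhh f(p),w_\hhh\rangle}{\varepsilon^2}
=\frac{f(p\cdot\delta_\varepsilon(w))-f(p\cdot\delta_\varepsilon(w_\hhh))}{\varepsilon^2}
+\frac{f(p\cdot\delta_\varepsilon(w_\hhh))-f(p)-\varepsilon\langle\nabla^\hhh f(p),w_\hhh\rangle}{\varepsilon^2}.
\end{equation*}
\Cref{lem_horizontal_vs_vertical_2nd} gives the $L^s(A)$ limit $w_NTf(p)$ for the first term, together with the bound $|w_N|\|Tf\|_{L^s(\Om)}$. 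It therefore suffices to show that the second term converges in $L^s(A)$ to $\frac12\langle\nabla^{2,\hhh}f(p)w_\hhh,w_\hhh\rangle$ and is bounded by $|w_\hhh|^2\|\nabla^{2,\hhh}f\|_{L^s(\Om)}$ under \eqref{condepsw}.

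As in the previous proofs, we first work with $f\in C^\infty(\Om)\cap W^{2,s}_\hhh(\Om)$ and then extend the result. For fixed $\varepsilon$, the operator $f\mapsto$ (second-order quotient) is $L^s$-continuous. The estimate \eqref{eq:priori11} is uniform in $\varepsilon$ and is controlled by the $W^{2,s}_\hhh$ norm, because $Tf$ is a combination of second horizontal derivatives by \Cref{lem_W21hinW11}. Hence \Cref{meyersserrin} and a density argument extend both the bound and the convergence to general $f\in W^{2,s}_\hhh(\Om)$.

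For smooth $f$, apply \Cref{deriprimasecondasmoothalongdilations} with $w_N=0$. Along $\tau\mapsto p\cdot\delta_\tau(w_\hhh)$ the first derivative is $\langle\nabla^\hhh f,w_\hhh\rangle$ and the second derivative is $\langle\nabla^{2,\hhh}f\,w_\hhh,w_\hhh\rangle$. All the extra terms vanish, since the curve is a horizontal straight line in the group sense: $\delta_\tau(w_\hhh)=\exp(\tau\sum_i w_iZ_i)$. Taylor's formula with integral remainder then gives
\begin{equation*}
f(p\cdot\delta_\varepsilon(w_\hhh))-f(p)-\varepsilon\langle\nabla^\hhh f(p),w_\hhh\rangle=\int_0^\varepsilon(\varepsilon-\tau)\langle\nabla^{2,\hhh}f(p\cdot\delta_\tau(w_\hhh))w_\hhh,w_\hhh\rangle\,d\tau .
\end{equation*}
Because $\int_0^\varepsilon(\varepsilon-\tau)\,d\tau=\varepsilon^2/2$, we can subtract the target $\frac12\langle\nabla^{2,\hhh}f(p)w_\hhh,w_\hhh\rangle$ inside the integral, divide by $\varepsilon^2$, and apply Minkowski's integral inequality. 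The error is then bounded by
\begin{equation*}
\frac{1}{\varepsilon^2}\int_0^\varepsilon(\varepsilon-\tau)|w_\hhh|^2\bigl\|\nabla^{2,\hhh}f(\cdot\,\delta_\tau(w_\hhh))-\nabla^{2,\hhh}f\bigr\|_{L^s(A)}\,d\tau .
\end{equation*}
This tends to $0$ by \Cref{lem_continuity_translations}, applied to the right translations by $\delta_\tau(w_\hhh)\to0$ after extending the functions by zero outside $\Om$. The points involved stay in $\Om$ by \eqref{condepsw}.

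The uniform bound comes from the same identity without subtracting the target term, using the right-invariance of Lebesgue measure. This gives $\frac12|w_\hhh|^2\|\nabla^{2,\hhh}f\|_{L^s(\Om)}\le|w_\hhh|^2\|\nabla^{2,\hhh}f\|_{L^s(\Om)}$. Adding the bound \eqref{eq:priori7} from \Cref{lem_horizontal_vs_vertical_2nd} yields \eqref{eq:priori11}.

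The only delicate points are two pieces of bookkeeping. First, one must check that the horizontal curve stays inside $\Om$, which \eqref{condepsw} guarantees. Second, the density step must be justified. Otherwise the argument mirrors \Cref{thm_convergence_diff_quot}.
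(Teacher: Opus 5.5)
Your proposal is correct and follows the paper's proof essentially step by step: you use the same split into the vertical quotient, handled by \Cref{lem_horizontal_vs_vertical_2nd}, and the horizontal Taylor expansion along $\tau\mapsto p\cdot\delta_\tau(w_\hhh)$ from \eqref{derivataalongdilation_2nd}, followed by Minkowski's inequality, continuity of right translations, and density via \Cref{meyersserrin}. Your two small refinements just make explicit what the paper leaves implicit: you obtain the uniform bound directly from the unsubtracted Taylor identity, which gives the constant in \eqref{eq:priori11} more cleanly than invoking \eqref{eq:priori12}, and you use \Cref{lem_W21hinW11} to control $\|Tf\|_{L^s(\Om)}$ by the $W^{2,s}_\hhh$ norm in the density step.
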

\begin{proof} As customary, As we assume $f\in C^\infty(\Om)\cap W^{2,s}_\hhh(\Om)$. Fix $\eps>0$ such that \eqref{condepsw} holds. One has
\begin{equation*}
f(p \cdot \delta_\varepsilon(w_\hhh))- f(p)-\varepsilon \left\langle \nabla^\hhh f(p), w_\hhh \right\rangle \overset{\eqref{derivataalongdilation_2nd}}{=} \int_0^\varepsilon (\varepsilon-\tau) \left\langle \nabla^{2,\hhh} f(p \cdot \delta_\tau(w_\hhh)) w_\hhh, w_\hhh \right\rangle \,d\tau,
\end{equation*}
so that
\begin{equation*}
\begin{split}
    f(p \cdot \delta_\varepsilon(w_\hhh))- & f(p)-\varepsilon \left\langle \nabla^\hhh f(p), w_\hhh \right\rangle - \frac{\varepsilon^2}{2} \left\langle \nabla^{2,\hhh} f(p) w_\hhh, w_\hhh \right\rangle\\
    & = \int_0^\varepsilon (\varepsilon-\tau) \left\langle \left[\nabla^{2,\hhh} f(p \cdot \delta_\tau(w_\hhh))-\nabla^{2,\hhh} f(p)\right] w_\hhh, w_\hhh \right\rangle \,d\tau.
\end{split}
\end{equation*}
Dividing by $\varepsilon^2$ and computing the $L^s$-norm we infer (as in the proof of \Cref{lem_horizontal_vs_vertical})
\begin{equation}\label{eq:priori12}
\begin{split}
    & \left(\int_{A} \left\vert \frac{f(p \cdot \delta_\varepsilon(w_\hhh))- f(p)-\varepsilon \left\langle \nabla^\hhh f(p), w_\hhh \right\rangle}{\varepsilon^2} -\frac{1}{2} \left\langle \nabla^{2,\hhh} f_k(p) w_\hhh, w_\hhh \right\rangle \right\vert^s \,dp\right)^{\frac{1}{s}} \\
    & \quad\leq \frac{1}{\varepsilon^2} \int_0^\varepsilon (\varepsilon-\tau) \left(\int_{A} \abs*{\left\langle \left[\nabla^{2,\hhh} f(p \cdot \delta_\tau(w_\hhh))-\nabla^{2,\hhh} f(p)\right] w_\hhh, w_\hhh \right\rangle}^s \,dp\right)^{\frac{1}{s}} \,d\tau\\
    & \quad\leq \frac{\abs*{w_\hhh}^2}{\varepsilon} \int_0^\varepsilon \left(\int_{A} \abs*{\nabla^{2,\hhh} f(p \cdot \delta_\tau(w_\hhh))-\nabla^{2,\hhh} f(p)}^s \,dp\right)^{\frac{1}{s}} \,d\tau.
\end{split}
\end{equation}
Therefore, \eqref{eq:priori10} follows by \Cref{lem_continuity_translations} and by \Cref{lem_horizontal_vs_vertical_2nd}. In addition \eqref{eq:priori7} and \eqref{eq:priori12}
grant \eqref{eq:priori11}.
\end{proof}
Finally, we address the case $s=\infty$. As in the Euclidean case, the uniform convergence of difference quotients is in general false. However, it is still possible to prove their uniform boundedness. For instance, fix $f\in W^{1,\infty}_{\hhh,\mathrm{loc}}(\rr^N)$ and fix bounded open sets $\Om\subseteq\hh^n$ and $\tilde\Om\Subset\Om$. By \cite{MR1631642}, $f$ is Lipschitz continuous on $\Om$ with respect to $d$. Denoting its Lipschitz constant by $\mathrm{Lip}(f,\Om)$, there exists $c_1=c_1(\tilde\Om)>0$ such that 
\begin{equation}\label{lipboundfirstorder}
    \left|\frac{f\left(p\cdot\delta_\epsilon(w)\right)-f(p)}{\varepsilon}\right|\leq  \mathrm{Lip}(f,\Om)d(0,w)\qquad\text{for every $p\in\tilde\Om$, $w\in B(0,1)$, $0<\varepsilon<c_1$.}
\end{equation}
Similar bounds hold for second-order difference quotients. For instance, in the following we may exploit the following result.
\begin{theorem} \label{thm_convergence_2nd_diff_quot_infty}
 Let $f \in W_{\hhh,\mathrm{loc}}^{2,\infty}(\hh^n)$. Let $\Om\subseteq\hh^n$ be a bounded open set. Let $R=R(\Om)>0$ be such that $\Om\Subset B(0,R)$. Then there exists $c_1=c_1(\Om)>0$ such that 
\begin{equation*}
\left|\frac{f(p \cdot \delta_\varepsilon(w)) - f(p) - \varepsilon \left\langle \nabla^\hhh f(p), w_\hhh \right\rangle}{\varepsilon^2}\right|\leq \mathrm{Lip}\left(\nabla^\hhh f,B(0,R+c_1)\right)d(0,w)^2
\end{equation*}
for every $p\in\Om$, $w\in B(0,1)$, $0<\varepsilon<c_1$.
\end{theorem}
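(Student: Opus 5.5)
The plan is to integrate along a horizontal curve joining $p$ to $p\cdot\delta_\varepsilon(w)$, obtained by left-translating and dilating a geodesic from $0$ to $w$, and to compare $\nabla^\hhh f$ along it with $\nabla^\hhh f(p)$ using the Lipschitz continuity of $\nabla^\hhh f$. This is legitimate because $f\in W^{2,\infty}_{\hhh,\mathrm{loc}}$ gives $Z_jf\in W^{1,\infty}_{\hhh,\mathrm{loc}}$ for $j=1,\ldots,2n$, so by \cite{MR1631642} $\nabla^\hhh f$ is $d$-Lipschitz on bounded sets. I take $c_1=1$ and write $L\coloneqq \mathrm{Lip}(\nabla^\hhh f,B(0,R+c_1))$, with the Euclidean norm on $\rr^{2n}$.

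\emph{Step 1: the smooth case.} Assume first that $f$ is smooth. Fix $w\in B(0,1)$ and a horizontal curve $\gamma:[0,1]\to\hh^n$ from $0$ to $w$ of length $d(0,w)$ with constant speed. Since $\gamma$ is horizontal, we can write $\dot\gamma(t)=\sum_{i\le 2n}h_i(t)Z_i(\gamma(t))$ with $|h(t)|=d(0,w)$. Because $X_j,Y_j$ carry $\partial_{x_j},\partial_{y_j}$ with coefficient $1$, the horizontal coordinates satisfy $\int_0^1h(t)\,dt=w_\hhh$.

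Set $\sigma(t)=p\cdot\delta_\varepsilon(\gamma(t))$. Since $\delta_\varepsilon$ is a group automorphism with $(\delta_\varepsilon)_*Z_i=\varepsilon Z_i$ for $i\le 2n$, and $L_p$ preserves left-invariant fields, we get $\dot\sigma(t)=\varepsilon\sum_i h_i(t)Z_i(\sigma(t))$. Moreover, by \eqref{eq_CC_distance}, $d(p,\sigma(t))\le \varepsilon t\,d(0,w)<c_1$, so $\sigma$ stays in $B(0,R+c_1)$.

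Hence
\begin{equation*}
f(p\cdot\delta_\varepsilon(w))-f(p)-\varepsilon\langle\nabla^\hhh f(p),w_\hhh\rangle=\varepsilon\int_0^1\langle\nabla^\hhh f(\sigma(t))-\nabla^\hhh f(p),h(t)\rangle\,dt,
\end{equation*}
and the absolute value is at most $\varepsilon\int_0^1 L\,\varepsilon t\,d(0,w)\cdot d(0,w)\,dt\le L\varepsilon^2d(0,w)^2$. This is the claimed bound, in fact with an extra factor $\tfrac12$.

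\emph{Step 2: removing smoothness.} This is the main technical point. I would mollify on the left, setting $f_\delta=\rho_\delta*f$, so that $f_\delta(p)=\int\rho_\delta(q)f(q^{-1}\cdot p)\,dq$ is a superposition of left translates of $f$. Since left-invariant fields commute with left translations, $Z_jf_\delta=\rho_\delta*Z_jf$, and this can be checked in the weak sense.

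By left-invariance of $d$, each translate $p\mapsto\nabla^\hhh f(q^{-1}\cdot p)$ with $q\in B(0,\delta)$ is $L'$-Lipschitz on $B(0,R+c_1-\delta)$, where $L'$ is the Lipschitz constant of $\nabla^\hhh f$ on the slightly larger ball $B(0,R+c_1+\delta)$. Averaging, the same holds for $\nabla^\hhh f_\delta$.

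Applying Step 1 to $f_\delta$ (with $c_1$ replaced by $c_1-\delta$ if needed) and letting $\delta\to0$, we use that $f$ and $\nabla^\hhh f$ are continuous, so $f_\delta\to f$ and $\nabla^\hhh f_\delta\to\nabla^\hhh f$ pointwise. To land exactly on $L$ rather than a slightly larger constant, it is cleaner to fix $c_1<1$ from the start, for instance $c_1=\tfrac12$, and work with margins inside $B(0,R+c_1)$. The only care needed in the whole proof is this bookkeeping of neighborhoods so that every point involved lies where the Lipschitz bound holds.

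Alternatively, one can avoid mollification altogether. Since $f\in C^1_\hhh$, the map $t\mapsto f(\sigma(t))$ is Lipschitz with derivative $\varepsilon\langle\nabla^\hhh f(\sigma(t)),h(t)\rangle$ for a.e.\ $t$, by Pansu differentiability of $f$. This gives the identity of Step 1 directly.
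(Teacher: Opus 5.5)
Your proof is correct and follows the paper's argument. You integrate $\langle\nabla^\hhh f(\cdot)-\nabla^\hhh f(p),\dot\sigma\rangle$ along a geodesic from $p$ to $p\cdot\delta_\varepsilon(w)$ (your $p\cdot\delta_\varepsilon(\gamma)$ is exactly such a geodesic) and bound it with the Lipschitz constant of $\nabla^\hhh f$. The differences are minor: your constant-speed parametrization gives an extra factor $\tfrac12$, and you justify the fundamental theorem of calculus along the horizontal curve by left mollification, where the paper cites a chain rule for $C^1_\hhh$ functions along horizontal curves. Your margin bookkeeping works because, for fixed $p,w,\varepsilon$, the whole curve lies in $B(0,d(0,p)+\varepsilon d(0,w))$, strictly inside $B(0,R+c_1)$, and $\nabla^\hhh f_\delta$ is already $\mathrm{Lip}(\nabla^\hhh f,B(0,R+c_1))$-Lipschitz on $B(0,R+c_1-\delta)$ by left-invariance of $d$, so no larger ball is needed.
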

\begin{proof}
Fix $w\in B(0,1)$.
Since $\Om\Subset B(0,R)$, there exists $c_1=c_1(\Om)>0$ such that $p\cdot\delta_\eps(w)\in B(0,R)$ for every $p\in\Om$ and every $0<\eps< c_1$. 
Let $\gamma:[0,1] \to \hh^n$ be an absolutely continuous, horizontal curve such that $\gamma(0)=p,\gamma(1)=p\cdot\delta_\varepsilon(w)$ and 
\begin{equation}\label{minicurve}
     d(p,p\cdot \delta_\varepsilon(w))=\int_0^1\sqrt{\left\langle\dot\gamma(\tau),\dot\gamma(\tau)\right\rangle}\,d\tau.
\end{equation}
The existence of such a curve is ensured e.g.~by \cite[Theorem 2.13]{MR3587666}.
Since $f\in W^{2,\infty}_{\hhh,\mathrm{loc}}(\hh^n)$, then $f\in C(\hh^n) $ and $Z_j f\in C(\hh^n)$ for any $j=1,\ldots,2n$ (cf.~\cite{MR1631642}). In particular, by \cite[Proposition 2.6]{MR4581339},
\begin{equation*}
f(p \cdot \delta_\varepsilon(w))-f(p)=\int_0^1 \left\langle \nabla^\hhh f(\gamma(\tau)),\dot{\gamma}(\tau) \right\rangle\,d\tau.
\end{equation*}
Moreover,
\begin{equation*}
\int_0^1 \left\langle \nabla^\hhh f(p),\dot{\gamma}(\tau) \right\rangle\,d\tau = \sum_{j=1}^{2n} Z_jf(p) \int_0^1 \dot{\gamma}_j(\tau)\,d\tau = \sum_{j=1}^{2n} Z_jf(p)(p_j+\varepsilon w_j - p_j)= \varepsilon \left\langle \nabla^\hhh f(p),w_H \right\rangle,
\end{equation*}
whence
\begin{equation*}
      f(p \cdot \delta_\varepsilon(w))-f(p)-\varepsilon \left\langle \nabla^\hhh f(p),w_H \right\rangle  = \int_0^1 \left\langle \nabla^\hhh f(\gamma(\tau))-\nabla^\hhh f(p),\dot{\gamma}(\tau) \right\rangle\,d\tau.
\end{equation*}
Notice that $\gamma(\tau)\in B(0,R+c_1)$ for any $\tau\in(0,1)$. Indeed, being $\gamma$ length-minimizing by \eqref{minicurve},
\begin{equation*}
    d(0,\gamma(\tau))\leq d(0,p)+d(p,\gamma(\tau))\leq d(0,p)+d(p,p\cdot\delta_\eps(w))< R+\eps d(0,w) < R+c_1.
\end{equation*}
Hence, since $\nabla^\hhh f$ is Lipschitz continuous on $B(0,R+c_1)$ and $\gamma$ is length-minimizing,
\begin{equation*}
\begin{split}
   \int_0^1 \left\langle \nabla^\hhh f(\gamma(\tau))-\nabla^\hhh f(p),\dot{\gamma}(\tau) \right\rangle\,d\tau
    & \leq \mathrm{Lip}\left(\nabla^\hhh f,B(0,R+c_1)\right)\int_0^1 d(p,\gamma(\tau))\abs*{\dot{\gamma}(\tau)}\,d\tau\\
    & \leq \mathrm{Lip}\left(\nabla^\hhh f,B(0,R+c_1)\right)d(p,p \cdot \delta_\varepsilon(w))\int_0^1 \abs*{\dot{\gamma}}\,d\tau\\
     \overset{\eqref{eq_CC_distance}}&{=} \mathrm{Lip}\left(\nabla^\hhh f,B(0,R+c_1)\right)\varepsilon^2d(w,0)^2,
\end{split}
\end{equation*}
which is the thesis.
\end{proof}

\section{Distributional solutions are renormalizable}\label{sec:commu}
In this section we prove that contact velocity fields with horizontal Sobolev regularity enjoy the renormalization property.
\subsection{Transport equation and distributional solutions} We begin by recalling some basic definitions and properties of transport equations.
 If $I\subseteq\rr$ is an open interval, we consider Borel functions $u:I\times\hh^n\to\rr$ up to $\mathcal{L}^{1+N}$-negligible sets. Inside this class, given
 $\bar\tau\in (0,\infty]$ and $\alpha,\beta\in [1,\infty]$, we consider (see \cite{MR3726909}) the space $L^\alpha\left(0,\bar\tau;L^\beta(\hh^n)\right)$ of those
 functions $u$ such that 
\begin{equation*}
\norm*{u}_{L^\alpha(0,\bar{\tau};L^\beta(\hh^n))}=\begin{cases}
\displaystyle\left(\int_0^{\bar{\tau}} \norm*{u(\cdot,\tau)}_{L^\beta(\hh^n)}^\alpha \,d\tau\right)^{\frac{1}{\alpha}} & \text{if $\alpha \in [1,\infty)$}\\
\mathop{\mathrm{ess\,sup}}\limits_{\tau \in (0,\bar{\tau})} \, \norm*{u(\cdot,\tau)}_{L^\beta(\hh^n)} & \text{if $\alpha=\infty$}
\end{cases}
\end{equation*}
is finite.
Its local version $L^\alpha\left(0,\bar{\tau};L^\beta_{\mathrm{loc}}(\hh^n)\right)$ is defined similarly. Moreover, we recall that when $\alpha=\beta$, $L^\alpha\left(0,\bar{\tau};L^\alpha(\hh^n)\right)$ naturally identifies with $L^\alpha\left((0,\bar{\tau})\times\hh^n\right)$.
If $\bb\in\VfH{n}$ is a velocity field, $c$ is a reaction term and $u_0$ is an initial condition, the Cauchy problem for the \emph{transport equation} associated with $\bb,c$ and $u_0$ reads formally as
\begin{equation} \label{eq_transport_equation}\tag{$\mathscr T$}
\begin{cases}
\displaystyle{\frac{\partial u}{\partial \tau} - \left\langle \bb,\nabla u \right\rangle + cu = 0} & \text{in $(0,\bar{\tau}) \times \hh^n$}\\
u(0,\cdot)=u_0 & \text{in $\hh^n$.}
\end{cases}
\end{equation}
The natural notion of solution to \eqref{eq_transport_equation} is that of \emph{distributional solution}. 
\begin{definition}[Distributional solutions]\label{def_distributional_solution}
    Fix $s \in [1,\infty]$ and assume that
\begin{equation} \label{b,c_assumptions}
\bb\in L^1\left(0,\bar{\tau};L^{s'}_{\mathrm{loc}}(\hh^n;\hh^n)\right), \qquad c,\divv \bb \in L^1\left(0,\bar{\tau};L^{s'}_{\mathrm{loc}}(\hh^n)\right),\qquad u_0\in L^s_{\mathrm{loc}}(\hh^n).
\end{equation}
We say that $u \in L^\infty\left(0,\bar{\tau};L^s_{\mathrm{loc}}(\hh^n)\right)$ ($u\in L^\infty\left((0,\bar\tau)\times\hh^n\right)$ if $s=\infty$) is a \emph{distributional solution} to \eqref{eq_transport_equation} if
\begin{equation*}
 \left\langle \mathscr T_{u_0,\bb,c}(u),\varphi\right\rangle\coloneqq-\int_{\hh^n}u_0(p)\varphi(0,p)\,dp+\int_{0}^{\bar{\tau}}\int_{\hh^n}u\left[-\partial_\tau\varphi+\langle\bb, \nabla \varphi \rangle+(c+\divv\bb)\varphi\right]\,dp\,d\tau=0 
\end{equation*}
for every $\varphi\in C_c^\infty([0,\bar{\tau})\times\hh^n)$.
\end{definition}
 Notice that, thanks to \eqref{b,c_assumptions} and to the $L^s-L^{s'}$ duality, \Cref{def_distributional_solution} is well-posed. Existence of solutions to \eqref{eq_transport_equation} is guaranteed, in great generality, by \cite[Proposition II.1]{MR1022305} (even though the result is stated in Euclidean spaces, we may apply it thanks to \eqref{eq:consistency_scalar} and the invariance of the divergence in the Euclidean and Heisenberg structures).
\begin{proposition} \label{prop_existence}
Let $s \in [1,\infty]$ and assume \eqref{b,c_assumptions}. Assume, in addition, that $u_0 \in L^s(\hh^n)$ and
\begin{equation*}
\begin{cases}
c+\frac{1}{s}\divv b \in L^1(0,\bar{\tau};L^\infty(\hh^n)) & \text{if $s \in (1,\infty]$}\\
c, \divv b \in L^1(0,\bar{\tau};L^\infty(\hh^n)) & \text{if $s=1$}.
\end{cases}
\end{equation*}
Then there exists a distributional solution $u \in L^\infty\left(0,\bar{\tau};L^s(\hh^n)\right)$ to \eqref{eq_transport_equation}.
\end{proposition}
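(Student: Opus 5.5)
The plan is to reduce \Cref{prop_existence} to its Euclidean counterpart \cite[Proposition II.1]{MR1022305}, rather than rebuilding the existence theory in $\hh^n$.

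The first step is to rewrite the transport equation in Euclidean coordinates. Write $\bb=\sum_{j=1}^N b_jZ_j$ and let $\tilde\bb$ be the same vector field expressed in the standard basis $\partial_{x_j},\partial_{y_j},\partial_t$ of $\rr^N$. Its components are $\tilde b_j=b_j$ for $j\le 2n$ and
\[
\tilde b_N=b_N+2\sum_{j=1}^n\left(y_jb_j-x_jb_{j+n}\right).
\]
By \eqref{eq:consistency_scalar}, $\langle\bb,\nabla\varphi\rangle=\langle\tilde\bb,\nabla^{\rr^N}\varphi\rangle_{\rr^N}$ for every test function $\varphi$. The divergence is the Euclidean one, since the Lebesgue measure is the Riemannian measure. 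Hence the weak formulation of \Cref{def_distributional_solution} coincides word for word with the Euclidean weak formulation in \cite{MR1022305} for the field $\tilde\bb$ and the same $c$ and $u_0$.

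The second step is to check the hypotheses of \cite[Proposition II.1]{MR1022305}. The quantities $c$, $\divv\bb$, $u_0$ and the norms $L^s(\hh^n)=L^s(\rr^N)$ are unchanged. The only thing to verify is the local integrability $\tilde\bb\in L^1(0,\bar\tau;L^{s'}_{\rm loc}(\rr^N))$. This holds because the coefficients $2y_j$ and $-2x_j$ are bounded on compact sets. The Euclidean theorem then gives $u\in L^\infty(0,\bar\tau;L^s(\rr^N))$ solving the Euclidean weak equation, and by the first step $u$ is a distributional solution in the sense of \Cref{def_distributional_solution}.

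If a self-contained argument is preferred, I would reproduce the Euclidean proof. Regularize $\bb$, $c$ and $u_0$ by (group or Euclidean) mollification, and solve the smooth problem by characteristics. Derive the a priori estimate on $\frac{d}{d\tau}\int|u|^s$, which involves exactly the combination $c+\frac1s\divv\bb$ in $L^\infty$ (for $s=1$, $c$ and $\divv \bb$ separately). Close with Gronwall, extract a weak-$*$ limit, and pass to the limit in the linear weak formulation. Passing to the limit is harmless because $\bb_k\to\bb$ and $c_k,\divv\bb_k\to c,\divv\bb$ strongly in $L^1(L^{s'}_{\rm loc})$, and these are paired with weakly converging $u_k$.

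The main obstacle I expect is the bookkeeping in the a priori estimate. Since the reaction term enters as $c+\divv\bb$ in the weak form, one must check the sign conventions: the equation reads $\partial_\tau u-\langle\bb,\nabla u\rangle+cu=0$, with velocity $-\bb$. Consequently the growth of $\int |u|^s$ is governed by $s c+\divv\bb$, and boundedness needs only an upper bound, matching the stated hypothesis. In the case $s=\infty$ the limit is taken weak-$*$ in $L^\infty$, and one uses $\bb\in L^1(L^1_{\rm loc})$ to pass to the limit.
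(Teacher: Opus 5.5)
Your proposal is correct and follows the paper's own route: the paper also obtains the statement by invoking \cite[Proposition II.1]{MR1022305}, justified by \eqref{eq:consistency_scalar} and the fact that the Heisenberg and Euclidean divergences coincide, and your explicit check that $\tilde\bb\in L^1(0,\bar\tau;L^{s'}_{\mathrm{loc}}(\rr^N))$ spells out a step the paper leaves implicit. One minor point in your optional sketch: since $\frac{d}{d\tau}\int|u|^s\,dp=-\int(sc+\divv\bb)|u|^s\,dp$, the Gronwall step uses a lower bound on $sc+\divv\bb$ (control of its negative part), which the two-sided hypothesis of course provides.
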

The following simple remark will be crucial in the following (cf.~for instance~\cite{AST_2008__317__175_0}).
\begin{proposition}\label{propositiondelellis}
  Let $u \in L^\infty\left(0,\bar{\tau};L^s_{\mathrm{loc}}(\hh^n)\right)$ be any distributional solution to \eqref{eq_transport_equation}. Extend $\bb,c, u$ to $(-\infty,\bar\tau)\times\hh^n$ by 
   \begin{equation}\label{extensiondelellis}
\bb(\tau,p)=\begin{cases}
0 & \text{if $\tau<0$}\\
\bb(\tau,p) & \text{otherwise,}
\end{cases}
\qquad   
c(\tau,p)=\begin{cases}
0 & \text{if $\tau<0$}\\
c(\tau,p) & \text{otherwise,}
\end{cases}
\qquad 
u(\tau,p)=\begin{cases}
u_0(p) & \text{if $\tau<0$}\\
u(\tau,p) & \text{otherwise.}
\end{cases}
\end{equation}
Then 
\begin{equation*}
    \partial_\tau u - \left\langle \bb,\nabla u\right\rangle + cu = 0  \qquad\text{in $(-\infty,\bar{\tau}) \times \hh^n$}
\end{equation*}
in the sense of distributions, namely $\langle \mathscr T_{\bb,c}(u),\varphi\rangle=0$ for every $\varphi\in C_c^\infty((-\infty,\bar{\tau})\times\hh^n)$, with
\begin{equation}\label{equazioneestesasenzadato}
   \left\langle \mathscr T_{\bb,c}(u),\varphi\right\rangle\coloneqq \int_{-\infty}^{\bar{\tau}}\int_{\hh^n}u\left[-\partial_\tau\varphi+\langle \bb, \nabla \varphi \rangle+(c+\divv \bb)\varphi\right]\,dp\,d\tau.
\end{equation}
\begin{proof}
    Let $\varphi\in C_c^\infty((-\infty,\bar{\tau})\times\hh^n)$. Then
    \begin{equation*}
        \begin{split}
             \int_{-\infty}^{\bar{\tau}}\int_{\hh^n}u[-\partial_\tau\varphi+&\langle \bb, \nabla \varphi \rangle+(c+\divv \bb)\varphi]\,dp\,d\tau\\
             \overset{\eqref{extensiondelellis}}&{=} -\int_{\hh^n}u_0(p)\left(\int_{-\infty}^{0}\partial_\tau\varphi\,d\tau\right)\,dp+ \int_{0}^{\bar{\tau}}\int_{\hh^n}u\left[-\partial_\tau\varphi+\langle \bb, \nabla \varphi \rangle+(c+\divv \bb)\varphi\right]\,dp\,d\tau\\
              &=-\int_{\hh^n}u_0(p)\varphi(0,p)dp+ \int_{0}^{\bar{\tau}}\int_{\hh^n}u\left[-\partial_\tau\varphi+\langle \bb, \nabla \varphi \rangle+(c+\divv \bb)\varphi\right]\,dp\,d\tau\\
              \overset{\eqref{eq_transport_equation}}&{=}0.
        \end{split}
    \end{equation*}
\end{proof}
\end{proposition}
\subsection{Renormalized solutions}
Distributional solutions are in general not stable with respect to composition with test functions, not even in the divergence-free case,
see for instance \cite{DePauw_03} for a beautiful example.  DiPerna-Lions' notion of \emph{renormalized solution} to \eqref{eq_transport_equation} formalizes such a property.
\begin{definition}[Renormalized solutions]\label{def_renormalized_solution}
Let $s\in [1,\infty]$ and assume that \eqref{b,c_assumptions} holds with $u_0 \in L^s(\hh^n)$.
Then a function $u\in L^\infty\left(0,\bar\tau;L^s(\hh^n)\right)$ is a \emph{renormalized solution} to \eqref{eq_transport_equation} if, for every $\beta \in C^1(\rr)$ with
$\beta'$ bounded, the function $\beta(u)$ is a distributional solution to
\begin{equation} \label{eq_renormalized_solution}
\begin{cases}
\dfrac{\partial \beta(u)}{\partial \tau}-\left\langle \bb,\nabla \beta (u)\right\rangle + cu\beta' (u) = 0 & \text{in $(0,\bar{\tau}) \times\hh^n$}\\

\beta(u)(0,\cdot)=\beta(u_0) & \text{in $\hh^n$,}
\end{cases}
\end{equation}
namely if, for every $\varphi\in C^\infty_c\left([0,\bar\tau)\times\hh^n\right),$
\begin{equation*}
    -\int_{\hh^n}\beta(u_0)(p)\varphi(0,p)\,dp
    +\int_{0}^{\bar{\tau}}\int_{\hh^n}\beta(u)\left[-\partial_\tau \varphi+\left\langle \bb, \nabla \varphi\right \rangle+\divv \bb\,\varphi\right]+c u\beta'(u)\varphi\,dp\,d\tau.
\end{equation*}
If $s=\infty$ the condition that $\beta'$ is bounded is not required.
\end{definition}
Choosing $\beta$ to be the identity map in \Cref{def_renormalized_solution}, it is clear that renormalized solutions are distributional solutions. When the converse implication holds, $\bb$ is said to enjoy the \emph{renormalization property}.
\begin{definition}[Renormalization property]
 Assume that $\bb\in\VfH{n}$ satisfies \eqref{b,c_assumptions} with $s=\infty$. We say that $\bb$ has the \emph{renormalization property} if bounded distributional solutions to \eqref{eq_transport_equation} with
 $u_0 \in L^\infty(\hh^n)$ and $c\in L^1_{\mathrm{loc}}([0,\bar\tau)\times \hh^n)$ are renormalized solutions.
\end{definition}
The renormalization property of contact vector fields with horizontal Sobolev regularity will follow as a corollary of the main result of the paper. 
\begin{theorem} \label{teo_distributional_implies_renormalized}
Let $s \in [1,\infty]$ and $u_0 \in L^s(\hh^n)$. Assume that $\bb\in L^1\left(0,\bar\tau;\VfH{n}\right)$ is a time-dependent contact vector field, induced by $\psi\in L^1\left(0,\bar\tau;W^{2,s'}_\hhh(\hh^n)\right)$ according to \eqref{contact_vector_field}. Then, if $c\in L^1_{\mathrm{loc}}([0,\bar\tau)\times \hh^n)$, any distributional solution $u \in L^\infty(0,\bar{\tau};L^s(\hh^n))$ to \eqref{eq_transport_equation} 
is a renormalized solution. In particular, $\bb$ has the renormalization property.
\end{theorem}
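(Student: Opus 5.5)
We follow the DiPerna--Lions strategy, using the group mollification $\rho_\eps$ of \eqref{eq_def_mollifier}. First we extend $u,\bb,c$ to negative times as in \Cref{propositiondelellis}, so that $\mathscr T_{\bb,c}(u)=0$ in $\mathcal D'((-\infty,\bar\tau)\times\hh^n)$. We then set $u_\eps=u*\rho_\eps$ (spatial group convolution) and test the equation with $\varphi*\check\rho_\eps$ as in \eqref{deficonvdistr}. This gives
\[
\partial_\tau u_\eps-\langle\bb,\nabla u_\eps\rangle+cu_\eps=\mathscr C_\eps,\qquad
\mathscr C_\eps\coloneqq (cu)*\rho_\eps-c\,u_\eps+\langle\bb,\nabla u_\eps\rangle-\big(\langle\bb,\nabla u\rangle\big)*\rho_\eps ,
\]
where the last term is understood as $\divv(\bb u)*\rho_\eps-(u\divv\bb)*\rho_\eps$. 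Since $u_\eps$ is smooth in space, the chain rule yields
\[
\partial_\tau\beta(u_\eps)-\langle\bb,\nabla\beta(u_\eps)\rangle+cu_\eps\beta'(u_\eps)=\beta'(u_\eps)\mathscr C_\eps .
\]
Letting $\eps\searrow0$ then gives \eqref{eq_renormalized_solution}, with the initial datum recovered through the extension. This step uses $u_\eps\to u$ in $L^s_{\mathrm{loc}}$ (\Cref{prop_group_mollification}), the bound on $\beta'$ together with dominated convergence, and $\mathscr C_\eps\to0$ in $L^1_{\mathrm{loc}}$. The whole proof therefore reduces to the strong convergence of the commutator.

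The $c$-part of $\mathscr C_\eps$ is handled in the standard way: $(cu)*\rho_\eps\to cu$ and $cu_\eps\to cu$ in $L^1_{\mathrm{loc}}$. For the $\bb$-part, we write $\nabla u_\eps=u*\nabla\rho_\eps$ via \eqref{eq_derivatives_convolution} and integrate by parts. This moves the derivative onto the mollifier through right-invariant fields (\Cref{lemscambiarederivateeinversione}, using $\rho=\check\rho$), and after the change of variables $q=p\cdot\delta_\eps(w)$ and the scaling \eqref{eq_derivative_mollification} we expect the representation
\[
\sum_{j=1}^{N}\int_{\hh^n}\frac{b_j(p\cdot\delta_\eps(w))-b_j(p)}{\eps^{d_j}}\,u(p\cdot\delta_\eps(w))\,Z^r_j\rho(w)\,dw-(u\divv\bb)*\rho_\eps,
\]
where $d_j=1$ for $j\le 2n$ and $d_N=2$. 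The right-invariant fields are $Z^r_j=Z_j\mp4w_{\cdot}T$ by \eqref{eq_right_vector_fields}, so they carry extra terms of the form $\frac{1}{\eps}\langle\JJ(\bb(p\cdot\delta_\eps w))-\JJ(\bb(p)),w\rangle T\rho$ at the wrong scale. The key step is to regroup: subtract $\eps\langle\nabla^\hhh b_N(p),w_\hhh\rangle$ inside the vertical quotient and add it back. This produces the vertical correction \eqref{intro_contributovert}, the horizontal term \eqref{intro_contributoorizz}, and a remainder of the form $\eps^{-2}\langle\nabla^\hhh b_N(p)+4\JJ(\bb(p)),\delta_\eps(w)\rangle u\,T\rho$. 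The contact structure \eqref{contact_vector_field} gives $b_N=-4\psi$ and $\bb_\hhh=-\JJ(\nabla^\hhh\psi)$, so by \eqref{eq_J_properties} we get $4\JJ(\bb)=-4\JJ\JJ\nabla^\hhh\psi=4\nabla^\hhh\psi=-\nabla^\hhh b_N$. The remainder therefore vanishes identically. Getting the signs and factors in this bookkeeping right is the main obstacle, and we would check it first on smooth $\psi$.

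For convergence, we use that $b_j\in W^{1,s'}_\hhh$ for $j\le2n$ and $b_N\in W^{2,s'}_\hhh$. \Cref{thm_convergence_diff_quot} and \Cref{thm_convergence_2nd_diff_quot} then give $L^{s'}_{\mathrm{loc}}$ convergence of the difference quotients, for each $w$, to $\langle\nabla^\hhh b_j,w_\hhh\rangle$ and to $\tfrac12\langle\nabla^{2,\hhh}b_Nw_\hhh,w_\hhh\rangle+w_NTb_N$ respectively. The domination bounds \eqref{eq:priori4} and \eqref{eq:priori11} are uniform in $w\in B(0,1)$, while $u(p\cdot\delta_\eps w)\to u(p)$ in $L^s_{\mathrm{loc}}$ uniformly in $w$ (\Cref{lem_continuity_translations}). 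Fubini and Hölder, integrated also in time, thus show that the commutator converges in $L^1_{\mathrm{loc}}$ to
\[
u(p)\int\Big[\sum_j\langle\nabla^\hhh b_j,w_\hhh\rangle Z_j\rho+\big(\tfrac12\langle\nabla^{2,\hhh}b_Nw_\hhh,w_\hhh\rangle+w_NTb_N\big)T\rho\Big]dw-u\divv\bb .
\]
Integrating by parts in $w$, using $\int w_iZ_j\rho=-\delta_{ij}$, $\int w_NT\rho=-1$, and the vanishing of the mixed and odd moments (which follows from the symmetry $\rho=\check\rho$ and $Z_jw_N$ terms), the integral should equal $-\sum_jZ_jb_j-Tb_N=-\divv\bb$. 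We would verify these moment identities explicitly, since the second-order $T\rho$ moments must cancel. The limit then equals $u\divv\bb-u\divv\bb=0$. The case $s=\infty$ uses the bounds \eqref{lipboundfirstorder} and \Cref{thm_convergence_2nd_diff_quot_infty} with an approximation argument, and the case $s=1$ uses the same bounds in $L^\infty$.
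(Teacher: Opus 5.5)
Your overall plan coincides with the paper's: extension to negative times, group mollification, integral representation of the commutator, \Cref{thm_convergence_diff_quot} and \Cref{thm_convergence_2nd_diff_quot}, moment identities, chain rule. Your target decomposition and your verification of $\nabla^\hhh b_N+4\JJ(\bb)=0$ are correct. However, the intermediate representation you display is wrong exactly where the contact structure enters, so the regrouping does not follow from it.

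The two pieces of the commutator carry different kernels. One pairs $b_j(p)$ with a left-invariant derivative of $\rho_\eps$, the other pairs $b_j$ at the translated point with a right-invariant one, or the reverse depending on the variables (compare $\mathscr C^1_\eps$ in \Cref{prop_formadelresto}). Writing both with $Z^r_j$ leaves a mismatch term $b_j(Z_j-Z^r_j)\rho_\eps$, which you dropped. In your variables the correct identity is
\[
\langle\bb,\nabla u_\eps\rangle-\divv(\bb u)*\rho_\eps=\sum_{j=1}^{N}\int_{\hh^n}\frac{b_j(p\cdot\delta_\eps(w))-b_j(p)}{\eps^{d_j}}\,u(p\cdot\delta_\eps(w))\,Z^r_j\rho(w)\,dw-\frac{4}{\eps}\int_{\hh^n}\langle\bb(p\cdot\delta_\eps(w)),\JJ(w)\rangle\,u(p\cdot\delta_\eps(w))\,T\rho(w)\,dw .
\]
The $\bb$-part of your $\mathscr C_\eps$ is this \emph{plus} $(u\divv\bb)*\rho_\eps$. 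With your minus sign the final count gives $-2u\divv\bb$, not $0$; your $\mathscr C_\eps$ also has the opposite global sign, which is harmless.

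Turning $Z^r_j$ into $Z_j$ in the sum yields $\frac4\eps\langle\bb(p\cdot\delta_\eps(w))-\bb(p),\JJ(w)\rangle\,u\,T\rho$. Together with the omitted term, this leaves precisely the base-point term $4\eps^{-2}\langle\JJ(\bb(p)),\delta_\eps(w)\rangle\,u(p\cdot\delta_\eps(w))\,T\rho(w)$ appearing in $A_2$ in the proof of \Cref{lem_commutation}.

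The difference term you called ``at the wrong scale'' is in fact harmless. It is a first-order quotient of $\bb_\hhh$, and its limit is integrated against $w_iw_kT\rho$, which has zero integral. The dangerous terms are the base-point ones, $\eps^{-1}\langle\JJ(\bb(p)),w\rangle$ and $\eps^{-1}\langle\nabla^\hhh b_N(p),w_\hhh\rangle$ multiplied by $u(p\cdot\delta_\eps(w))T\rho(w)$. They are not controlled for rough $u$, because $\|u\circ R_{\delta_\eps(w)}-u\|_{L^s}$ has no rate. From your displayed formula only the second appears, and nothing cancels it.

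There are two smaller slips. First, the Lipschitz bounds \eqref{lipboundfirstorder} and \Cref{thm_convergence_2nd_diff_quot_infty} concern $\psi\in W^{2,\infty}_\hhh$, i.e.\ the case $s=1$, not $s=\infty$. For $s=\infty$ one has $s'=1$, so the $L^1$ difference-quotient theorems apply directly. Only $u\circ R_{\delta_\eps(w)}\to u$ must then be taken in $L^1_{\mathrm{loc}}$, combined with the $L^\infty$ bound and dominated convergence. Second, the moment identities $\int w_iZ_j\rho=-\delta_{ij}$, $\int w_iw_kT\rho=0$, $\int w_NT\rho=-1$ follow from integration by parts alone. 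The evenness of $\rho$ is needed earlier, for the inversion step.

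On the positive side, your treatment of the $c$-part, splitting into $(cu)*\rho_\eps\to cu$ and $cu_\eps\to cu$, is simpler than the paper's Step~2. It also covers $s=1$, where the paper's bound through $\|c\circ R_{\delta_\eps(w)}-c\|_{L^{s'}}$ with $s'=\infty$ need not tend to zero.
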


As a consequence of \Cref{teo_distributional_implies_renormalized} and \Cref{prop_existence}, we get existence and uniqueness of distributional solutions to \eqref{eq_transport_equation}. 

\begin{theorem} \label{teo_uniqueness_null_solution}
Let $s \in [1,\infty]$, $u_0 \in L^s(\hh^n)$ and let $\bb$ as in \Cref{teo_distributional_implies_renormalized}. Assume, in addition, that $c, \divv \bb \in L^1\left(0,\bar{\tau};L^\infty(\hh^n)\right)$ and that
\begin{equation} \label{condizionedicrescitadpladattata}
\frac{|\bb|}{1+d(p,0)} \in L^1\left(0,\bar{\tau};L^1(\hh^n)\right)+L^1\left(0,\bar{\tau};L^\infty(\hh^n)\right),
\end{equation} 
where $|\bb|$ is the norm of $\bb$ with respect to the Riemannian metric $\langle\cdot,\cdot\rangle$.
Then there exists a unique distributional solution $u$ to \eqref{eq_transport_equation} in $L^\infty(0,\bar{\tau}; L^s(\hh^n))$ corresponding to the initial condition $u_0$.
\end{theorem}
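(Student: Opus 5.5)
Existence comes from \Cref{prop_existence}; uniqueness reduces, by linearity, to showing that a distributional solution $u\in L^\infty(0,\bar\tau;L^s(\hh^n))$ with $u_0=0$ vanishes identically. For existence, note that $c,\divv\bb\in L^1(0,\bar\tau;L^\infty(\hh^n))$ gives $c+\frac1s\divv\bb\in L^1(0,\bar\tau;L^\infty)$ (and both separately when $s=1$). The integrability requirements \eqref{b,c_assumptions} follow from $\psi\in L^1(0,\bar\tau;W^{2,s'}_\hhh)$, since $b_j$ are first horizontal derivatives of $\psi$, $b_N=-4\psi$, and $\divv\bb=-4(n+1)T\psi$ by \Cref{lem_W21hinW11}. \Cref{prop_existence} then yields a solution in $L^\infty(0,\bar\tau;L^s)$.

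For uniqueness, I apply \Cref{teo_distributional_implies_renormalized}: $u$ is renormalized. I choose $\beta(r)=\min(|r|,M)^{s}$ or a $C^1$ bounded-derivative regularization of it (for $s=\infty$, simply $\beta(r)=\min(r^2,1)$ smoothed). This gives
\begin{equation*}
\partial_\tau\beta(u)-\langle\bb,\nabla\beta(u)\rangle+cu\beta'(u)=0
\end{equation*}
with zero initial datum, where $\beta(u)\in L^\infty(0,\bar\tau;L^1\cap L^\infty)$ and $|cu\beta'(u)|\le C\|c\|_\infty\beta(u)$. (For $s<\infty$, the truncation keeps $\beta(u)$ integrable. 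I would use $\beta_M(r)$ behaving like $|r|^s$ for small $|r|$ and growing linearly beyond, so that $\beta'$ is bounded and $\beta_M(u)\in L^1$ because $\beta_M(r)\lesssim_M |r|^s$.)

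Next I test against $\varphi(\tau,p)=\chi(\tau)\phi_R(p)$, where $\phi_R(p)=\phi(d(p,0)/R)$ is a cutoff with $\phi=1$ on $[0,1]$ and $\phi=0$ on $[2,\infty)$. Since $d(\cdot,0)$ is only Lipschitz, I either smooth it or use the smooth homogeneous gauge, equivalent to $d$, whose horizontal and vertical derivatives obey $|\nabla^\hhh\phi_R|\lesssim 1/R$ and $|T\phi_R|\lesssim 1/R^2$ on $R\le d\le 2R$. The Riemannian norm satisfies $|\nabla\phi_R|\lesssim 1/R\lesssim 1/(1+d(p,0))$ on the annulus. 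This gives, for $\Phi_R(\tau)=\int\beta(u)\phi_R\,dp$,
\begin{equation*}
\frac{d}{d\tau}\Phi_R\le \int\beta(u)|\bb||\nabla\phi_R|\,dp+C(\|c\|_\infty+\|\divv\bb\|_\infty)\Phi_R
\end{equation*}
in the distributional sense on $(0,\bar\tau)$, with $\Phi_R(0)=0$.

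The main obstacle is showing that the flux term vanishes as $R\to\infty$. I split $|\bb|/(1+d)=g_1+g_2$ with $g_1\in L^1(L^1)$ and $g_2\in L^1(L^\infty)$, following \eqref{condizionedicrescitadpladattata}. The $g_1$ part is bounded by $\|\beta(u)\|_\infty\int_0^\tau\int_{d\ge R}g_1$, which tends to $0$. The $g_2$ part is bounded by $\int_0^\tau\|g_2\|_\infty\int_{R\le d\le 2R}\beta(u)$, which tends to $0$ by dominated convergence because $\beta(u)\in L^\infty(L^1)$. Letting $R\to\infty$ and applying Gronwall gives $\int\beta(u(\tau))=0$ for a.e.\ $\tau$, hence $u=0$. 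If $\beta_M$ was used, I finally let $M\to\infty$ where needed, though $\beta_M(u)=0$ already gives $u=0$.
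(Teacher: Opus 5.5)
For $1\le s<\infty$ your argument is correct and follows the paper's route. The paper invokes \Cref{teo_distributional_implies_renormalized} and then repeats the proof of DiPerna--Lions' Theorem~II.2 with cut-offs $\phi_R=\phi\circ\delta_{1/R}$ adapted to $\hh^n$, which is what your gauge cut-off does. Drop the parenthetical $\beta_M$ with linear growth at infinity: it is unbounded, so it breaks your bound $\|\beta(u)\|_\infty\int_0^\tau\!\int_{\{d\ge R\}}g_1$. The smoothed bounded truncation $\min(|r|,M)^s$ is the right choice, since it gives both $\beta(u)\in L^\infty(0,\bar\tau;L^1\cap L^\infty)$ and $|r\beta'(r)|\le C\beta(r)$.

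There is a genuine gap at $s=\infty$, which the statement includes. There $u$ is merely bounded, so $\beta(u)=\min(u^2,1)$ (smoothed) need not belong to $L^1(\hh^n)$, and your claim $\beta(u)\in L^\infty(0,\bar\tau;L^1\cap L^\infty)$ fails. The $g_1$-part of the flux still tends to zero. The $g_2$-part $\int_0^\tau\|g_2\|_\infty\int_{\{R\le d\le 2R\}}\beta(u)\,dp\,d\sigma$, however, is only bounded by a multiple of $R^Q$, so you cannot conclude it vanishes. It is also not controlled by $\Phi_R$ itself, so Gronwall does not close either.

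One extra idea suffices. Take $\phi$ radially nonincreasing; then $\phi_R\le\phi_{2R}$ and $\phi_{2R}=1$ on the support of $\nabla\phi_R$, so your computation actually gives
\begin{equation*}
\Phi_R(\tau)\le\epsilon_R+\int_0^\tau a(\sigma)\,\Phi_{2R}(\sigma)\,d\sigma,\qquad \epsilon_R\coloneqq C\|\beta(u)\|_\infty\int_0^{\bar\tau}\!\!\int_{\{d(p,0)\ge R/C\}}g_1\,dp\,d\sigma\xrightarrow[R\to\infty]{}0,
\end{equation*}
with $a\in L^1(0,\bar\tau)$ collecting $\|c\|_\infty$, $\|\divv\bb\|_\infty$ and $\|g_2\|_\infty$. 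Iterate $k$ times and use the volume bound $\Phi_{2^kR}\le C(2^{k+1}R)^Q$. With $A(\tau)=\int_0^\tau a$, this yields $\Phi_R(\tau)\le e^{A(\tau)}\epsilon_R+C(2R)^Q(2^QA(\tau))^k/k!$. Let $k\to\infty$; since $R\mapsto\Phi_R$ is nondecreasing while $\epsilon_R\to0$, you get $\Phi_R\equiv0$.

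Alternatively, test with the weight $(1+\|p\|^4/R^4)^{-m/4}$, $m>Q$, times a cut-off at a larger scale. Its gradient is at most $C(R+d(p,0))^{-1}$ times the weight. The $g_2$-part is then absorbed by Gronwall, and the $g_1$-part vanishes as $R\to\infty$ by dominated convergence, because $(1+d)/(R+d)\to0$.
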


\begin{remark}
    The integrability condition \eqref{condizionedicrescitadpladattata} is the natural adaptation to our framework, due to both the Riemannian norm of $\bb$ and the sub-Riemannian distance $d(p,0)$, of the \emph{Euclidean} growth assumption imposed in \cite{MR1022305}. Nevertheless, we stress that the latter would have worked as well in our setting: in that case, the proof of \Cref{teo_uniqueness_null_solution} follows \emph{verbatim} that of \cite[Theorem II.2]{MR1022305}.
\end{remark}

\subsection{Regularization}
As customary, the key step in the proof of the renormalization property consists in showing that the spatial group mollification $u*\rho_\varepsilon$ of a distributional solution 
$u$ to \eqref{eq_transport_equation} still satisfies \eqref{eq_transport_equation} up to a remainder distribution, the so-called \emph{commutator} $\mathscr C_\varepsilon$. As the latter admits an integral representation, the main effort is to prove that this error tends to $0$ not only in the sense of distributions, but actually in $L^1_{\mathrm{loc}}$. 
 We begin by providing an integral representation for the above-mentioned error term. 
\begin{proposition}\label{prop_formadelresto}
 For $s \in [1,\infty]$, assume \eqref{b,c_assumptions}. Fix a distributional solution $u \in L^\infty\left(0,\bar{\tau}; L^s(\hh^n)\right)$ to \eqref{eq_transport_equation} and extend $u,\bb,c$ to $(-\infty,\bar\tau)$ as in \eqref{extensiondelellis}. For every $\varepsilon>0$, let $u_\varepsilon = u*\rho_\varepsilon$ be the spatial group mollification of $u$ by $\rho_\varepsilon$, with $\rho_\varepsilon$ as in \eqref{eq_def_mollifier}. Then, the distribution $\mathscr T_{\bb,c}(u_\varepsilon)$ in \eqref{equazioneestesasenzadato} is representable by integration of the commutator $\mathscr C_\varepsilon$, namely
  \begin{equation*}
        \left\langle \mathscr T_{\bb,c}(u_\varepsilon),\varphi\right\rangle=\int_{-\infty}^{\bar\tau}\int_{\hh^n}\varphi(\tau,p)\mathscr C_\varepsilon(\tau,p)\,dp\,d\tau\qquad\text{for every $\varphi\in C^\infty_c((-\infty,\bar\tau)\times\hh^n)$,}
  \end{equation*}
  where
$\mathscr C_\varepsilon=\mathscr C_\varepsilon^1+\mathscr C_\varepsilon^2\in L^1\left(-\infty,\bar\tau,L_{\mathrm{loc}}^1(\hh^n)\right)$ is defined, for a.e.~$(\tau,p)\in(-\infty,\bar\tau)\times\hh^n$, by 
\begin{align*}
    \mathscr C_\eps^1(\tau,p)&\coloneqq -\left((u\divv \bb)*\rho_\varepsilon\right)(\tau, p )-\int_{\hh^n}u\left(\tau, p\cdot q^{-1} \right)\sum_{j=1}^N\left[b_j(\tau, p )Z_j\rho_\varepsilon(q)-b_j\left(\tau, p\cdot q^{-1} \right)Z^r_j\rho_\varepsilon(q)\right]\,dq,\\
     \mathscr C_\eps^2(\tau,p)&\coloneqq \int_{\hh^n}u\left(\tau, p\cdot q^{-1}\right )\rho_\varepsilon(q)\left[c(\tau, p )-c\left(\tau, p\cdot q^{-1}\right)\right]\,dq.
\end{align*}
\end{proposition}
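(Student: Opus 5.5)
We compute $\langle \mathscr T_{\bb,c}(u_\varepsilon),\varphi\rangle$ directly by moving the mollification onto the test function. The approach is to write $\mathscr T_{\bb,c}(u_\varepsilon)$ as the mollified zero distribution $\mathscr T_{\bb,c}(u)*\rho_\varepsilon$ plus explicit error terms, and to identify those errors with $\mathscr C_\varepsilon^1$ and $\mathscr C^2_\varepsilon$.

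\textbf{Step 1 (time derivative).} Since $\rho_\varepsilon$ acts only in space and $\check\rho_\varepsilon=\rho_\varepsilon$ by \eqref{eq_mollificatori}, Fubini gives
\[
\int\!\!\int u_\varepsilon\,\partial_\tau\varphi=\int\!\!\int u\,\partial_\tau(\varphi*\rho_\varepsilon).
\]
By \Cref{propositiondelellis}, $\langle \mathscr T_{\bb,c}(u),\varphi*\rho_\varepsilon\rangle=0$. Note that $\varphi*\rho_\varepsilon\in C^\infty_c$, since $\varphi$ is smooth and compactly supported and $\rho_\varepsilon$ is compactly supported. Hence
\[
\int\!\!\int u\,\partial_\tau(\varphi*\rho_\varepsilon)=\int\!\!\int u\left[\langle\bb,\nabla(\varphi*\rho_\varepsilon)\rangle+(c+\divv\bb)(\varphi*\rho_\varepsilon)\right].
\]
Substituting this into the definition of $\langle\mathscr T_{\bb,c}(u_\varepsilon),\varphi\rangle$, the time-derivative terms disappear. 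We are left with the difference between the spatial terms evaluated on $u_\varepsilon,\varphi$ and on $u,\varphi*\rho_\varepsilon$.

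\textbf{Step 2 (reaction term).} The $c$-part is
\[
\int\!\!\int c\,u_\varepsilon\varphi-\int\!\!\int c\,u\,(\varphi*\rho_\varepsilon).
\]
Writing $u_\varepsilon(p)=\int u(p\cdot q^{-1})\rho_\varepsilon(q)\,dq$ and $(cu)*\rho_\varepsilon(p)=\int (cu)(p\cdot q^{-1})\rho_\varepsilon(q)\,dq$, and moving the convolution in the second integral onto $cu$ by Fubini and \eqref{eq_Lebesgue_Haar}, this becomes $\int\!\!\int\varphi\,\mathscr C^2_\varepsilon$.

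\textbf{Step 3 (velocity terms).} Similarly, the $\divv\bb$ term with $u,\varphi*\rho_\varepsilon$ produces $-((u\divv\bb)*\rho_\varepsilon)$, after accounting for signs. For the first-order terms:
\begin{itemize}
\item On the $u_\varepsilon$ side, for smooth $u_\varepsilon$ we have $\langle\bb,\nabla\varphi\rangle+\divv\bb\,\varphi$ integrated against $u_\varepsilon$, which equals $-\int\varphi\langle\bb,\nabla u_\varepsilon\rangle$. By \eqref{eq_derivatives_convolution}, $Z_ju_\varepsilon=u*Z_j\rho_\varepsilon$, so $\langle\bb,\nabla u_\varepsilon\rangle(p)=\int u(p\cdot q^{-1})\sum_j b_j(p)Z_j\rho_\varepsilon(q)\,dq$.
\item On the other side, $\int u\langle\bb,\nabla(\varphi*\rho_\varepsilon)\rangle$ must be written with $\varphi$ undifferentiated. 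This is done by differentiating $(\varphi*\rho_\varepsilon)(p')=\int\varphi(q)\rho_\varepsilon(q^{-1}\cdot p')\,dq$ in $p'$ along $Z_j$, exchanging integrals, and changing variables $p'=p\cdot q^{-1}$ by unimodularity. The derivative falls on $\rho_\varepsilon$ evaluated at $p'^{-1}\cdot p$. Using \Cref{lemscambiarederivateeinversione} together with the symmetry of $\rho_\varepsilon$, left derivatives of $\rho_\varepsilon$ at the inverted point become $-Z^r_j\rho_\varepsilon(q)$.
\item This yields the term $\int u(p\cdot q^{-1})b_j(p\cdot q^{-1})Z^r_j\rho_\varepsilon(q)\,dq$.
\end{itemize}
Collecting the contributions of Steps 2 and 3 gives exactly $\mathscr C^1_\varepsilon+\mathscr C^2_\varepsilon$.

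\textbf{Integrability.} That $\mathscr C_\varepsilon\in L^1(-\infty,\bar\tau;L^1_{\rm loc})$ follows from H\"older's inequality. Indeed, $u\in L^\infty L^s$ and $\bb,\divv\bb,c\in L^1L^{s'}_{\rm loc}$, and $p\cdot q^{-1}$ ranges in a compact set for $p$ in a compact set and $q\in\operatorname{supp}\rho_\varepsilon$. For the unspecified local integrability of $c$ in the general case, only $L^1_{\rm loc}$ times bounded is needed when $s=\infty$.

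\textbf{Main obstacle.} The delicate point is the bookkeeping of left versus right derivatives and of inverses in Step 3. In particular, one must check the identity $Z_j$ of $p'\mapsto\rho_\varepsilon(q^{-1}\cdot p')$, rewritten after the change of variables, and confirm that the signs produced by \Cref{lemscambiarederivateeinversione} exactly cancel the minus sign coming from the distributional definition.
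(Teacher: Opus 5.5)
Your proposal is correct and follows essentially the same route as the paper. You test the extended equation for $u$ against $\varphi*\check\rho_\varepsilon$ (which equals $\varphi*\rho_\varepsilon$ by evenness), rewrite $\langle\mathscr T_{\bb,c}(u_\varepsilon),\varphi\rangle$ using the spatial smoothness of $u_\varepsilon$ and $Z_ju_\varepsilon=u*Z_j\rho_\varepsilon$, convert derivatives of the reflected kernel into $-Z_j^r\rho_\varepsilon$ via \Cref{lemscambiarederivateeinversione}, and conclude with the change of variables $w\mapsto p\cdot q^{-1}$. The sign check you flag does close: $Z_j(\varphi*\rho_\varepsilon)(w)=-\int\varphi(p)\,Z_j^r\rho_\varepsilon(w^{-1}\cdot p)\,dp$, so after $w=p\cdot q^{-1}$ the term $-\int\!\!\int u\langle\bb,\nabla(\varphi*\rho_\varepsilon)\rangle$ becomes exactly $+\int\!\!\int\varphi(p)\sum_{j}u(p\cdot q^{-1})\,b_j(p\cdot q^{-1})\,Z_j^r\rho_\varepsilon(q)\,dq\,dp$, as required in $\mathscr C^1_\varepsilon$.
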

\begin{proof}
 Fix $\varphi\in C^\infty_c\left((-\infty,\bar\tau)\times\hh^n\right)$. Since $u$ solves \eqref{eq_transport_equation}, then $u$ extended as in \eqref{extensiondelellis} solves \eqref{equazioneestesasenzadato} by \Cref{propositiondelellis}. Therefore,
\begin{equation}\label{mollizero}
    \left\langle \mathscr T_{\bb,c}(u)*\rho_\varepsilon,\varphi\right\rangle\overset{\eqref{deficonvdistr}}{=} \left\langle \mathscr T_{\bb,c}(u),\varphi*\check\rho_\varepsilon\right\rangle=0.
\end{equation}
Moreover, since $u_\varepsilon$ is smooth in the spatial variable and $Z_j$ is left-invariant, 
\begin{equation}\label{uepsmolliinproof}
\begin{split}
      \left\langle \mathscr T_{\bb,c}(u_\varepsilon),\varphi\right\rangle&= -\int_{-\infty}^{\bar{\tau}}\int_{\hh^n}u_\varepsilon\partial_\tau \varphi\,d p \,d\tau+\int_{-\infty}^{\bar{\tau}}\int_{\hh^n}\varphi\left[-\langle\bb,\nabla u_\varepsilon\rangle+cu_\varepsilon\right]\,d p \,d\tau\\
      &=-\int_{-\infty}^{\bar{\tau}}\int_{\hh^n}u_\varepsilon\partial_\tau \varphi\,d p \,d\tau+\int_{-\infty}^{\bar{\tau}}\int_{\hh^n}\varphi\left[-\sum_{j=1}^Nb_j\left(u*Z_j\rho_\varepsilon\right)+cu_\varepsilon\right]\,d p \,d\tau.
\end{split}
\end{equation}
On the other hand, one has
\begin{equation*}
\begin{split}
0\overset{\eqref{deficonvdistr},\eqref{mollizero}}&{=}
     \left\langle \mathscr T_{\bb,c}(u)*\rho_\varepsilon,\varphi\right\rangle\\
     &= -\int_{-\infty}^{\bar{\tau}}\int_{\hh^n}u(\tau, w )\frac{d}{d\tau}\int_{\hh^n}\varphi(\tau, p )\check\rho_\varepsilon\left( p^{-1}\cdot w \right)\,d p \,d w \,d\tau\\
     &\quad+\int_{-\infty}^{\bar{\tau}}\int_{\hh^n}u(\tau, w )\sum_{j=1}^N b_j(\tau, w )Z_j\left(\varphi*\check\rho_\varepsilon\right)(\tau, w )\,d w \,d\tau\\
     &\quad+\int_{-\infty}^{\bar{\tau}}\int_{\hh^n}\int_{\hh^n}u(\tau, w )(c+\divv\bb)(\tau, w )\varphi(\tau, p )\rho_\varepsilon\left( w ^{-1}\cdot p \right)\,d p \,d w \,d\tau\\
     \overset{\eqref{eq_derivatives_convolution},\eqref{scambiarederivateeinversione}}&{=}-\int_{-\infty}^{\bar{\tau}}\int_{\hh^n}u_\varepsilon(\tau, p )\partial_\tau \varphi(\tau, p )d p \,d\tau-\int_{-\infty}^{\bar{\tau}}\int_{\hh^n}u(\tau, w )\sum_{j=1}^N b_j(\tau, w )\left(\varphi*\widecheck {Z_j^r\rho_\varepsilon}\right)(\tau, w )\,d w \,d\tau\\
     &\quad+\int_{-\infty}^{\bar{\tau}}\int_{\hh^n}\int_{\hh^n}u(\tau, w )(c+\divv\bb)(\tau, w )\varphi(\tau, p )\rho_\varepsilon\left( w ^{-1}\cdot p \right)\,d p \,d w \,d\tau.   
\end{split}
\end{equation*}
In particular, we can replace in \eqref{uepsmolliinproof} the term $-\int_{-\infty}^{\bar{\tau}}\int_{\hh^n}u_\varepsilon\partial_\tau \varphi\,d p \,d\tau$ by the sum of two other terms resulting from the previous identity to get
\begin{equation*}
    \begin{split}
        &\left\langle \mathscr T_{\bb,c}(u_\varepsilon),\varphi\right\rangle  =-\int_{-\infty}^{\bar{\tau}}\int_{\hh^n}\varphi(\tau, p )\int_{\hh^n}u(\tau, w )\sum_{j=1}^Nb_j(\tau, p )Z_j\rho_\varepsilon\left( w ^{-1}\cdot p \right)\,d w \,d p \,d\tau\\
     &\quad+\int_{-\infty}^{\bar{\tau}}\int_{\hh^n}\varphi(\tau, p )\int_{\hh^n}u(\tau, w )\sum_{j=1}^N b_j(\tau, w )Z_j^r\rho_\varepsilon\left( w ^{-1}\cdot p \right)\,d w \,d p \,d\tau\\
     &\quad-\int_{-\infty}^{\bar{\tau}}\int_{\hh^n}\varphi(\tau, p )\left(\left(u\divv\bb\right)*\rho_\varepsilon\right)(\tau, p )\,d p \,d\tau+\int_{-\infty}^{\bar{\tau}}\int_{\hh^n}\varphi (\tau, p )\int_{\hh^n}u(\tau, w )c(\tau, p )\rho_\varepsilon\left( w^{-1}\cdot p \right)\,d w \,d p \,d\tau\\
     &\quad-\int_{-\infty}^{\bar{\tau}}\int_{\hh^n}\varphi(\tau, p )\int_{\hh^n}u(\tau, w )c(\tau, w )\rho_\varepsilon\left( w ^{-1}\cdot p \right)\,d w \,d p\,d\tau.
    \end{split}
\end{equation*}
Recalling \eqref{eq_Lebesgue_Haar}, the thesis follows by the change of variables $w\mapsto p\cdot q^{-1}$.
\end{proof}

Next, we show that, when $\bb$ has a contact structure, $\mathscr C_\eps\to 0$ strongly in $ L^1\left(-\infty,\bar\tau,L_{\mathrm{loc}}^1(\hh^n)\right)$. We stress that $\bb$ is a time-dependent contact vector field if and only if its extension as in \eqref{extensiondelellis} is a contact vector field (just choose $\psi(\tau,\cdot)\equiv 0$ for $\tau<0$).
\begin{lemma} \label{lem_commutation}
Let $s \in [1,\infty]$, let $\bb$ be a time-dependent contact vector field as in \Cref{teo_distributional_implies_renormalized} and let $c \in L^1\left(0,\bar{\tau};L^{s'}_{\mathrm{loc}}(\hh^n)\right)$. For $u \in L^\infty\left(0,\bar{\tau}; L^s_{\mathrm{loc}}(\hh^n)\right)$, extend $u,\bb,c$ to $(-\infty,\bar\tau)\times\hh^n$ as in \eqref{extensiondelellis}. Then  $\mathscr C_\varepsilon \to 0$ in $L^1\left(-\infty,\bar\tau; L^1_{\mathrm{loc}}(\hh^n)\right)$ as $\varepsilon \searrow 0$. 
\end{lemma}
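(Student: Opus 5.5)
We will treat $\mathscr C^2_\eps$ and $\mathscr C^1_\eps$ separately; in both cases the plan is the DiPerna--Lions scheme. We first prove a uniform bound in $L^1(-\infty,\bar\tau;L^1(K))$ for every compact $K$ by a linear quantity, and then get convergence to $0$ by density (smooth $\psi$, smooth $c$), for which the limit is computed explicitly. After the change of variables $q=\delta_\eps(w)$, all difference quotients are of the form $f(p\cdot\delta_\eps(w)^{-1})=f(p\cdot\delta_\eps(w^{-1}))$. Since $\rho$ is supported in $B(0,1)$ and $w^{-1}=-w$, the results of \Cref{sec:diff_quo} apply with $w$ replaced by $-w$. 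For $\mathscr C^2_\eps$ we write
\[
\mathscr C^2_\eps(\tau,p)=\int u(\tau,p\cdot\delta_\eps(w)^{-1})\rho(w)\big[c(\tau,p)-c(\tau,p\cdot\delta_\eps(w)^{-1})\big]\,dw .
\]
By H\"older's inequality (with $u\in L^s$, $c\in L^{s'}_{\rm loc}$), by continuity of right translations in $L^{s'}$ (\Cref{lem_continuity_translations}; when $s'=\infty$ we instead approximate $u$ in $L^1_{\rm loc}$), and by dominated convergence in $\tau$, this term tends to $0$. This part is routine.

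For $\mathscr C^1_\eps$, we use $\divv\bb=\sum_j Z_jb_j$ together with \Cref{lemscambiarederivateeinversione}. Since $\rho$ is symmetric, $Z_j^r\rho_\eps(q)=-Z_j\rho_\eps(q^{-1})$. We then rescale using \eqref{eq_derivative_mollification}. The horizontal indices carry a factor $\eps^{-1}$ and the vertical index a factor $\eps^{-2}$. Next we write $Z^r_j=Z_j\mp4y_jT$ for the horizontal indices, as in \eqref{eq_right_vector_fields}. The difference $Z_j-Z_j^r$ produces terms of the form $\eps^{-2}\langle 4\JJ(\bb(p\cdot\delta_\eps(w)^{-1})),w\rangle T\rho(w)$ (up to sign conventions). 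The vertical term produces $\eps^{-2}(b_N(p)-b_N(p\cdot\delta_\eps(w)^{-1}))T\rho(w)$, which is of order $\eps^{-1}$ and has no a priori control. The key algebraic step is to use the contact structure $\bb=-4\psi T-\JJ(\nabla^\hhh\psi)$, so that $b_N=-4\psi$ and $4\JJ(\bb_\hhh)=-4\JJ\JJ\nabla^\hhh\psi=4\nabla^\hhh\psi=-\nabla^\hhh b_N$. Hence $\nabla^\hhh b_N+4\JJ(\bb)=0$. We add and subtract $\eps\langle\nabla^\hhh b_N(p),w_\hhh\rangle$ and the analogous term evaluated at $p\cdot\delta_\eps(w)^{-1}$. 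This splits $\mathscr C^1_\eps$, as in the introduction, into three parts: the horizontal part \eqref{intro_contributoorizz}, the second-order vertical part \eqref{intro_contributovert}, a remainder that vanishes identically by the above identity, plus first-order cross terms. In those cross terms the difference $\JJ(\bb(p\cdot\delta_\eps(w)^{-1}))-\JJ(\bb(p))$ is of order $\eps$, and it is divided by $\eps^{2}$ but multiplied by $w$. That gives an $\eps^{-1}$ first difference quotient of $\nabla^\hhh\psi\in W^{1,s'}_\hhh$.

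With this representation, the bounds \eqref{eq:priori4} (for $\nabla^\hhh\psi\in W^{1,s'}_\hhh$) and \eqref{eq:priori11} (for $\psi\in W^{2,s'}_\hhh$, or \Cref{thm_convergence_2nd_diff_quot_infty} when $s'=\infty$) give
\[
\|\mathscr C^1_\eps\|_{L^1(L^1(K))}\le C\,\|u\|_{L^\infty(L^s)}\,\|\psi\|_{L^1(W^{2,s'}_\hhh)} .
\]
We then pass to the limit. By \Cref{thm_convergence_diff_quot}, \Cref{thm_convergence_2nd_diff_quot} and continuity of translations for $u$, each integrand converges in $L^1_{\rm loc}$ to an explicit expression:
\[
-u\divv\bb+u\sum_j\int\langle\nabla^\hhh b_j,w_\hhh\rangle w\text{-moments of }Z_j\rho+u\int\Big(\tfrac12\langle\nabla^{2,\hhh}b_Nw_\hhh,w_\hhh\rangle+w_NTb_N\Big)T\rho\,dw+\dots
\]
Integrating by parts in $w$ ($\int w_iZ_j\rho=-\delta_{ij}$, $\int T\rho=0$, and moments of $T\rho$ are computed similarly), this sum should cancel exactly to $0$. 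When $s=\infty$ we will approximate $u$ rather than $\bb$, since translations are not continuous in $L^\infty$. The alternative is to check the limit for smooth $\psi$ and conclude via the uniform bound.

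The main obstacle is the bookkeeping in this last step. One has to verify that the quadratic moments of $T\rho$ against $\nabla^{2,\hhh}b_N$ and the cross terms combine with $-u\divv\bb=4(n+1)uT\psi$ to give zero. Only the antisymmetric part $[Y_j,X_j]=4T$ survives (via \Cref{lem_W21hinW11}), while symmetric Hessian parts against $T\rho$ vanish after integration by parts and the symmetry of $\rho$. I would first carry out the whole limit computation for $\psi\in C^\infty_c$, where it equals the classical identity $\mathscr C_\eps\to0$ for smooth fields, so the cancellation is guaranteed. The density argument combined with the uniform bound then avoids computing the limit in general.
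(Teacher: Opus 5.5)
Your argument is sound in its main line and rests on the same identity as the paper, $\nabla^\hhh b_N+4\JJ(\bb)=0$, but it is organized differently.

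\emph{Where the correction sits.} You turn $b_j(p\cdot q^{-1})Z^r_j\rho_\eps(q)$ into left-invariant derivatives, so the $\JJ$-correction carries $\bb$ at the translated point. This creates an extra cross term, essentially $\eps^{-1}\langle 4\JJ(\bb(p\cdot\delta_\eps(v)))-4\JJ(\bb(p)),v\rangle$ with $v=w^{-1}$. You must bound it as a first-order difference quotient of $\nabla^\hhh\psi$, and its limit vanishes because $\int w_iw_kT\rho\,dw=0$ for $i,k\le 2n$. The paper does the opposite: it rewrites $b_j(p)Z_j\rho_\eps(q)$ through $Z^r_j$, so the correction is $-4\langle\bb(p),\JJ(q)\rangle T\rho_\eps(q)$ with $\bb$ frozen at $p$. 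The singular term $B_2$ then vanishes pointwise and no cross terms appear.

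\emph{How the limit is obtained.} You use the DiPerna--Lions scheme: a bound on $\mathscr C_\eps$ linear in $\psi$, plus density. The paper instead identifies the limits directly for $\psi\in W^{2,s'}_\hhh$ via the difference-quotient theorems of \Cref{sec:diff_quo}, where the density argument is hidden. Your bound $\|\mathscr C_\eps\|_{L^1(L^1(K))}\lesssim\|u\|_{L^\infty(L^s)}\|\psi\|_{L^1(W^{2,s'}_\hhh)}$ makes the passage from fixed $\tau$ to convergence in $L^1(-\infty,\bar\tau;L^1_{\rm loc})$ an immediate dominated convergence, which the paper leaves implicit. Likewise, approximating $u$ in $L^1$ for $\mathscr C^2_\eps$ when $s'=\infty$ covers a case where the continuity of translations in $L^{s'}$, invoked in the paper's Step 2, is unavailable.

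Three claims in your last step need correcting.

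\emph{The smooth case is not classical.} The vanishing for smooth $\psi$ is not ``the classical identity $\mathscr C_\eps\to0$ for smooth fields''. With group mollifiers the vertical term carries $\eps^{-2}$. Take the smooth, divergence-free but non-contact field $\bb=X_1$ with $c=0$: the formula of \Cref{prop_formadelresto} gives $\mathscr C_\eps=-4\,u*(y_1T\rho_\eps)$. This is $\eps^{-1}$ times a mean-zero average, and by dilation invariance its norm as an operator $L^s\to L^1_{\rm loc}$ grows like $\eps^{-1}$. By Banach--Steinhaus it does not tend to $0$ for every $u$. So the smooth case must genuinely be computed, and the contact identity is exactly what removes the $O(\eps^{-1})$ term.

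\emph{The final cancellation is simpler than you expect.} Once the singular term is gone, no delicate bookkeeping remains. All horizontal quadratic moments of $T\rho$ vanish because $T(w_iw_k)=0$; this kills both the Hessian term and your cross terms. Moreover $\int w_iZ_j\rho\,dw=-\delta_{ij}$ for $i,j\le 2n$ and $\int w_NT\rho\,dw=-1$. Hence the limit of $-\mathscr C^1_\eps$ is $u\divv\bb-u\sum_{j\le 2n}Z_jb_j-uTb_N=0$, by the very definition of $\divv\bb$. You need neither $[Y_j,X_j]=4T$ nor $\divv\bb=-4(n+1)T\psi$.

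\emph{Your endpoints are swapped.} Density in $\psi$ fails for $s'=\infty$ (i.e.\ $s=1$), and that is where you should approximate $u$ in $L^1$. For $s=\infty$ you cannot approximate $u$ in $L^\infty$, but you do not need to: then $s'=1$ and smooth functions are dense in $W^{2,1}_\hhh$.
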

\begin{proof}
Since every argument will be carried out for $\tau$ fixed, without loss of generality we assume that $u$, $b$ and $c$ do not depend on $\tau$. We prove
in two steps the local $L^1$-convergence of $\mathscr C^i_\varepsilon$.

\smallskip\noindent
{\bf Step 1. Convergence of $\mathscr C^1_\varepsilon$.} For $p\in\hh^n$ one has
\begin{equation*}
\begin{split}
    -\mathscr C_\varepsilon^1(p) &=\left({ (u\divv \bb)}*\rho_\varepsilon\right)( p )+\int_{\hh^n}u\left( p\cdot q^{-1} \right)\sum_{j=1}^{N}\left[b_j( p )Z_j\rho_\varepsilon(q)-b_j\left(p\cdot q^{-1} \right)Z^r_j\rho_\varepsilon(q)\right]\,d q\\
     \overset{\eqref{eq_right_vector_fields}}&{=} ((u \divv \bb)*\rho_\varepsilon)(p) \\
    &\quad+ \sum_{j=1}^n \int_{\hh^n} b_j(p) u\left(p \cdot q^{-1}\right) X_j^r \rho_\varepsilon(q) \,dq + 4\sum_{j=1}^n \int_{\hh^n} b_j(p) u\left(p \cdot q^{-1}\right) y_j(q) T \rho_\varepsilon(q) \,dq\\ & \quad + \sum_{j=1}^n \int_{\hh^n} b_{n+j}(p) u\left(p \cdot q^{-1}\right) Y_j^r \rho_\varepsilon(q) \,dq - 4\sum_{j=1}^n \int_{\hh^n} b_{n+j}(p) u\left(p \cdot q^{-1}\right) x_j(q) T \rho_\varepsilon(q) \,dq\\ & \quad + \int_{\hh^n} b_N(p) u\left(p \cdot q^{-1}\right) T \rho_\varepsilon(q) \,dq - \sum_{j=1}^N \int_{\hh^n} b_j\left(p \cdot q^{-1}\right) u\left(p \cdot q^{-1}\right) Z_j^r \rho_\varepsilon(q) \,dq.
\end{split}
\end{equation*}
In particular, recalling \eqref{eq_J},
\begin{equation*}
    \begin{split}
         -\mathscr C_\varepsilon^1(p) & = ((u \divv \bb)*\rho_\varepsilon)(p) +\sum_{j=1}^N \int_{\hh^n} \left[b_j(p) - b_j\left(p \cdot q^{-1}\right)\right] u\left(p \cdot q^{-1}\right) Z_j^r \rho_\varepsilon(q) \,dq\\ & \quad -4 \int_{\hh^n} \langle \bb(p), \JJ(q) \rangle u\left(p \cdot q^{-1}\right) T \rho_\varepsilon(q) \,dq\\
     \overset{\eqref{eq_derivative_mollification}}&{=} ((u \divv \bb)*\rho_\varepsilon)(p) + \sum_{j=1}^{2n} \int_{\hh^n} \left[b_j(p) - b_j\left(p \cdot q^{-1}\right)\right] u\left(p \cdot q^{-1}\right) \varepsilon^{-Q-1} Z_j^r \rho \left(\delta_{\frac{1}{\varepsilon}}(q)\right) \,dq\\ & \quad + \int_{\hh^n} \left[b_N(p) - b_N\left(p \cdot q^{-1}\right)\right] u\left(p \cdot q^{-1}\right) \varepsilon^{-Q-2} T \rho \left(\delta_{\frac{1}{\varepsilon}}(q)\right) \,dq\\ & \quad -4 \int_{\hh^n} \langle \bb(p), \JJ(q) \rangle u\left(p \cdot q^{-1}\right) \varepsilon^{-Q-2} T \rho \left(\delta_{\frac{1}{\varepsilon}}(q)\right) \,dq.
    \end{split}
\end{equation*}
Performing the change of variables $w = \delta_{\frac{1}{\varepsilon}}(q)$, and recalling that $Z_j^r\rho\left(w^{-1}\right)=-Z_j\rho(w)$ 
(here we use for the first time that $\rho$ is an even kernel) for every $j=1,\ldots,N$ by \Cref{lemscambiarederivateeinversione} and \eqref{eq_mollificatori}, we obtain
\begin{equation*}
\begin{split}
   - \mathscr C&_\varepsilon^1(p)  \overset{\eqref{eq_dilation_Lebesgue}}{=} ((u \divv \bb)*\rho_\varepsilon)(p) + \sum_{j=1}^{2n} \int_{\hh^n} \frac{b_j(p)-b_j\left(p \cdot \delta_\varepsilon\left(w^{-1}\right)\right)}{\varepsilon} u\left(p \cdot \delta_\varepsilon\left(w^{-1}\right)\right) Z_j^r \rho(w) \,dw\\ & \quad + \int_{\hh^n} \frac{b_N(p)-b_N\left(p \cdot \delta_\varepsilon\left(w^{-1}\right)\right)}{\varepsilon^2} u\left(p \cdot \delta_\varepsilon\left(w^{-1}\right)\right) T \rho(w) \,dw\\ & \quad -4 \int_{\hh^n} \frac{\langle\bb(p), \JJ(\delta_\varepsilon(w)) \rangle}{\varepsilon^2} u\left(p \cdot \delta_\varepsilon\left(w^{-1}\right)\right) T \rho(w) \,dw\\
    & = ((u \divv \bb)*\rho_\varepsilon)(p) + \underbrace{\sum_{j=1}^{2n} \int_{\hh^n} \frac{b_j(p \cdot \delta_\varepsilon(w))-b_j(p)}{\varepsilon} u(p \cdot \delta_\varepsilon(w)) Z_j \rho(w) \,dw}_{\eqqcolon A_1(p)}\\ & \quad + \underbrace{\int_{\hh^n} \frac{b_N(p \cdot \delta_\varepsilon(w))-b_N(p)}{\varepsilon^2} u(p \cdot \delta_\varepsilon(w)) T \rho(w) \,dw +4 \int_{\hh^n} \frac{\langle \JJ(\bb(p)), \delta_\varepsilon(w) \rangle}{\varepsilon^2} u(p \cdot \delta_\varepsilon(w)) T \rho(w) \,dw}_{\eqqcolon A_2(p)}.
\end{split}
\end{equation*}
First, we compute the $L^1_{\mathrm{loc}}$-limit of $A_1$. Recall that $b_j \in W_{\hhh,\mathrm{loc}}^{1,s'}(\hh^n)$ for every $j=1,\ldots,2n$. Therefore, if $s'<\infty$, the dominated convergence theorem and \Cref{thm_convergence_diff_quot} imply that, for every $K \Subset \hh^n$,
\begin{equation*}
\begin{split}
   \int_K & \left\vert \int_{\hh^n} \left[\frac{b_j(p \cdot \delta_\varepsilon(w))-b_j(p)}{\varepsilon} u(p \cdot \delta_\varepsilon(w)) - \left\langle\nabla^\hhh b_j(p), w_\hhh \right\rangle u(p) \right] Z_j \rho(w) \,dw \right\vert \,dp\\
    & \leq \int_{\hh^n} \abs*{Z_j \rho(w)} \left(\int_K \left\vert \frac{b_j(p \cdot \delta_\varepsilon(w))-b_j(p)}{\varepsilon} u(p \cdot \delta_\varepsilon(w)) - \left\langle \nabla^\hhh b_j(p), w_\hhh \right\rangle u(p) \right\vert \,dp\right) \,dw \xrightarrow[\varepsilon \searrow 0]{} 0,
\end{split}
\end{equation*}
where we also used the $L^s$-continuity of right translations to replace $u(p \cdot \delta_\varepsilon(w))$ with $u(p)$ before applying
\Cref{thm_convergence_diff_quot}.
If instead $s'=\infty$, fix $j=1,\ldots,2n$ and set, for every $\varepsilon>0$,
\begin{equation*}
f_\varepsilon(p,w)=\frac{b_j(p \cdot \delta_\varepsilon(w))-b_j(p)}{\varepsilon}, \qquad f(p,w)=\left\langle \nabla^\hhh b_j(p), w_\hhh \right\rangle.
\end{equation*}
Fix open sets $K \Subset\Om\Subset \hh^n$.
Then
\begin{equation*}
\begin{split}
    \int_K & \abs*{\int_{\hh^n} [f_\varepsilon(p,w)u(p \cdot \delta_\varepsilon(w))-f(p,w)u(p)]Z_j\rho(w)\,dw}\,dp\\
    & \leq \int_{B(0,1)} \abs*{Z_j\rho(w)} \left(\int_K \abs*{f_\varepsilon(p,w)u(p \cdot \delta_\varepsilon(w))-f(p,w)u(p)}\,dp\right)\,dw.
\end{split}
\end{equation*} 
 By \eqref{lipboundfirstorder}, 
\begin{equation*}
    |f_\eps(p,w)|\leq \mathrm{Lip}(b_j,\Om)d(0,w),\qquad|f(p,w)|\leq|w_\hhh| \left\|\nabla^\hhh b_j\right\|_{L^\infty(K)}
\end{equation*}
for every $p\in K$, $w\in B(0,1)$ and $0<\eps<c_1$. Therefore, for $w\in B(0,1)$ and $0<\eps<c_1$ fixed,
\begin{equation*}
\begin{split}
    \int_K &\abs*{f_\varepsilon(p,w)u(p \cdot \delta_\varepsilon(w))-f(p,w)u(p)}\,dp\\ 
    &    \leq \int_K \abs*{f_\varepsilon(p,w)}\abs*{u(p \cdot \delta_\varepsilon(w))-u(p)}\,dp + \int_K \abs*{f_\varepsilon(p,w)-f(p,w)}\abs*{u(p)}\,dp\\
    & \leq\mathrm{Lip}(f,\Om)d(0,w)\norm*{u \circ R_{\delta_\varepsilon(w)}-u}_{L^1(K)}+\int_K \abs*{f_\varepsilon(p,w)-f(p,w)}\abs*{u(p)}\,dp.
\end{split}
\end{equation*}
The first term converges to $0$ thanks to \Cref{lem_continuity_translations}. For the second one, 
\begin{equation*}
\abs*{f_\varepsilon(\cdot,w)-f(\cdot,w)}\abs*{u} \leq \left(\mathrm{Lip}(f,\Om)d(0,w)+|w_\hhh| \left\|\nabla^\hhh b_j\right\|_{L^\infty(K)}\right)\abs*{u} \in L^1(K).
\end{equation*}
Moreover, since $b_j\in W^{1,s'}_\hhh(\Om)$ for every $s'\in[1,\infty]$, \Cref{thm_convergence_diff_quot} implies that, up to a subsequence, $\abs*{f_\varepsilon(\cdot,w)-f(\cdot,w)}\abs*{u} \to 0$ as $\varepsilon \searrow 0$ a.e.~on $K$. Hence, by the dominated convergence theorem, 
\begin{equation*}
K_\varepsilon(w) \coloneqq \int_K \abs*{f_\varepsilon(p,w)u(p \cdot \delta_\varepsilon(w))-f(p,w)u(p)}\,dp \xrightarrow[\varepsilon \searrow 0]{} 0.
\end{equation*}
In addition, by the above computations, $K_\eps$ is bounded in $L^\infty(B(0,1))$ uniformly in $\eps\in (0,c_1)$. Therefore, we conclude by the dominated convergence theorem that
\begin{equation*}
\begin{split}
    \int_K & \left\vert \int_{\hh^n} \left[\frac{b_j(p \cdot \delta_\varepsilon(w))-b_j(p)}{\varepsilon} u(p \cdot \delta_\varepsilon(w)) - \left\langle\nabla^\hhh b_j(p), w_\hhh \right\rangle u(p) \right] Z_j \rho(w) \,dw \right\vert \,dp\\
    & = \int_K \abs*{\int_{\hh^n} [f_\varepsilon(p,w)u(p \cdot \delta_\varepsilon(w))-f(p,w)u(p)]Z_j\rho(w)\,dw}\,dp \xrightarrow[\varepsilon \searrow 0]{} 0.
\end{split}
\end{equation*}
In any case, we have obtained that
\begin{equation*}
\begin{split}
    &\lim_{\varepsilon \searrow 0}^{L^1_{\mathrm{loc}}} A_1(p)  = \sum_{j=1}^{2n} \int_{\hh^n} \left\langle \nabla^\hhh b_j(p), w_\hhh \right\rangle u(p) Z_j\rho(w) \,dw\\
    &\quad = \sum_{j=1}^{2n} \sum_{\substack{i=1 \\ i \neq j}}^{2n} \int_{\hh^n} Z_i b_j(p) w_i u(p) Z_j \rho(w) \,dw + \sum_{j=1}^{2n} \int_{\hh^n} Z_j b_j(p) w_j u(p) Z_j \rho(w) \,dw\\
    & \quad= \sum_{j=1}^{2n} \sum_{\substack{i=1 \\ i \neq j}}^{2n} Z_i b_j(p) u(p) \underbrace{\int_{\hh^n} Z_j(w_i \rho(w)) \,dw}_{=0} + u(p)\sum_{j=1}^{2n} Z_j b_j(p)    \Bigg(\underbrace{\int_{\hh^n} Z_j(w_j \rho(w)) \,dw}_{=0} -\int_{\hh^n} \rho(w) \underbrace{Z_j w_j}_{=1} \,dw\Bigg)\\
    & \quad= -u(p)\sum_{j=1}^{2n} Z_j b_j(p)=- u(p)\divv\bb(p)+u(p)T b_N(p).
\end{split}
\end{equation*}
Next, we focus on $A_2$. Notice that
\begin{equation*}
\begin{split}
    A_2(p) & = \underbrace{\int_{\hh^n} \frac{b_N(p \cdot \delta_\varepsilon(w))-b_N(p) - \varepsilon \langle \nabla^\hhh b_N(p), w_\hhh\rangle}{\varepsilon^2} u(p \cdot \delta_\varepsilon(w)) T \rho(w) \,dw}_{\eqqcolon B_1(p)} \\ & \quad + \underbrace{\int_{\hh^n} \frac{\langle \nabla^\hhh b_N(p) + 4\JJ(\bb(p)), \delta_\varepsilon(w) \rangle}{\varepsilon^2} u(p \cdot \delta_\varepsilon(w)) T \rho(w) \,dw}_{\eqqcolon B_2(p)}.
\end{split}
\end{equation*}
Recall that, since $\bb$ is a contact vector field, $b_N \in W^{2,s'}_{\hhh,\mathrm{loc}}(\hh^n)$. Therefore, arguing as above, we get, by \Cref{thm_convergence_2nd_diff_quot} if $s'<\infty$ or \Cref{thm_convergence_2nd_diff_quot_infty} if $s'=\infty$,
\begin{equation*}
\begin{split}
    \lim_{\varepsilon \searrow 0}^{L^1_{\mathrm{loc}}} B_1(p) & = \int_{\hh^n} \left(\frac{1}{2} \left\langle \nabla^{2,\hhh} b_N(p) w_\hhh, w_\hhh \right\rangle + w_NTb_N(p) \right) u(p) T\rho(w) \,dw\\
    & = \sum_{i,j=1}^{2n} \frac{1}{2} \int_{\hh^n} Z_i Z_j b_N(p) w_i w_j u(p) T\rho(w) \,dw + \int_{\hh^n} w_NTb_N(p) u(p) T\rho(w) \,dw\\
    & = \sum_{i,j=1}^{2n} \frac{1}{2}  Z_i Z_j b_N(p) u(p) \underbrace{\int_{\hh^n} T(w_i w_j \rho(w)) \,dw}_{=0}\\ & \quad + Tb_N(p) u(p) \Biggl(\underbrace{\int_{\hh^n} T(w_N \rho(w)) \,dw}_{=0} -\int_{\hh^n} \rho(w) \underbrace{T w_N}_{=1} \,dw\Biggr)\\
    & = -Tb_N(p) u(p).
\end{split}
\end{equation*}
Finally, since $\bb$ is a contact vector field generated by $\psi\in W^{2,s'}_\hhh(\hh^n)$,
\begin{equation}\label{extra_J}
\nabla^\hhh b_N + 4\JJ(\bb) \overset{\eqref{contact_vector_field}}{=} 
-4\nabla^\hhh {\psi} +4\JJ\left(-\JJ\left(\nabla^\hhh {\psi}\right)\right) \overset{\eqref{eq_J_properties}}{=} 0.
\end{equation}
Then $B_2=0$. In conclusion,
\begin{equation}\label{eq_primorestovaazero}
-\lim_{\varepsilon \searrow 0}^{L^1_{\mathrm{loc}}} \mathscr C_\varepsilon^1 = \lim_{\varepsilon \searrow 0}^{L^1_{\mathrm{loc}}} (u\divv \bb)*\rho_\varepsilon + \lim_{\varepsilon \searrow 0}^{L^1_{\mathrm{loc}}} (A_1+A_2)= u\divv \bb-u\divv \bb=0,
\end{equation}
where the second equality holds due to \Cref{prop_group_mollification}, since $u\divv \bb \in L^1_{\mathrm{loc}}(\hh^n)$. 

\smallskip\noindent
{\bf Step 2. Convergence of $\mathscr C^2_\varepsilon$.} For $p\in\hh^n$ one has
\begin{equation*}
\begin{split}
    \mathscr C_\varepsilon^2(p) 
    &=\int_{\hh^n} \left[c(p)u\left(p \cdot q^{-1}\right)\rho_\varepsilon(q)-c\left(p \cdot q^{-1}\right)u\left(p \cdot q^{-1}\right)\rho_\varepsilon(q)\right]\,dq\\
    &=\int_{\hh^n}\left[c(p)-c\left(p \cdot q^{-1}\right)\right]u\left(p \cdot q^{-1}\right)\varepsilon^{-Q}\rho\left(\delta_{\frac{1}{\varepsilon}}(q)\right)\,dq\\
    \overset{\eqref{eq_dilation_Lebesgue}}&{=}\int_{\hh^n}[c(p)-c(p \cdot \delta_\varepsilon(w))]u(p \cdot \delta_\varepsilon(w))\rho(w)\,dw.
\end{split}
\end{equation*}
Therefore, fixing $K \Subset\hh^n$, we obtain by H\"older's inequality
\begin{equation*}
\begin{split}
    \norm*{\mathscr C_\varepsilon^2}_{L^1(K)} & \leq \int_K \int_{\hh^n} \abs*{c(p \cdot \delta_\varepsilon(w))-c(p)}\abs*{u(p \cdot \delta_\varepsilon(w))}\rho(w) \,dw\,dp\\
    & = \int_{\hh^n} \rho(w) \left(\int_K \abs*{c(p \cdot \delta_\varepsilon(w))-c(p)}\abs*{u(p \cdot \delta_\varepsilon(w))}\,dp\right)\,dw\\
    & \leq \int_{\hh^n} \rho(w) \norm*{c \circ R_{\delta_\varepsilon(w)} - c}_{L^{s'}(K)} \norm*{u \circ R_{\delta_\varepsilon(w)}}_{L^s(K)} \,dw\\
     \overset{\eqref{eq_Lebesgue_Haar}}&{=} \norm*{u}_{L^s(K)} \int_{\hh^n} \rho(w) \norm*{c \circ R_{\delta_\varepsilon(w)} - c}_{L^{s'}(K)} \,dw.
\end{split}
\end{equation*}
Since $\norm*{c \circ R_{\delta_\varepsilon(w)} - c}_{L^{s'}(K)} \xrightarrow[\varepsilon \searrow 0]{} 0$ by \Cref{lem_continuity_translations} and
\begin{equation*}
\rho(w) \norm*{c \circ R_{\delta_\varepsilon(w)} - c}_{L^{s'}(K)} \overset{\eqref{eq_Lebesgue_Haar}}{\leq} 2\norm*{c}_{L^{s'}(K)} \rho(w) \in L^1(\hh^n) \qquad \text{for every $\varepsilon>0$ and $w \in \hh^n$,}
\end{equation*}
$\mathscr C_\varepsilon^2 \xrightarrow[\varepsilon \searrow 0]{L^1_{\mathrm{loc}}} 0$ by the dominated convergence theorem.
\end{proof}
\subsection{Proofs of \Cref{teo_distributional_implies_renormalized} and \Cref{teo_uniqueness_null_solution}}

\begin{proof}[Proof of \Cref{teo_distributional_implies_renormalized}] Extend $u,\bb,c$ to $(-\infty,\bar\tau)$ as in \eqref{extensiondelellis}.
Let $u_\eps$ and $\mathscr C_\eps$ be as in \Cref{prop_formadelresto}, so that $u_\eps$ is a distributional solution to 
\begin{equation*} 
\partial_\tau u_\eps - \left\langle \bb,\nabla u_\eps\right\rangle + cu_\eps =\mathscr C_\eps  \qquad \text{in $(-\infty,\bar{\tau}) \times\hh^n$,}
\end{equation*}
that is, recalling the smoothness of $u_\eps $ in the spatial variable,
\begin{equation}\label{dertempdebinproof}
\begin{split}
   -\int_{-\infty}^{\bar{\tau}}\int_{\hh^n}u_\eps\partial_\tau \varphi\,dp\,d\tau=\int_{-\infty}^{\bar{\tau}}\int_{\hh^n}\left(\left\langle \bb, \nabla u_\eps\right \rangle-u_\eps c+\mathscr C_\eps\right)\varphi \,dp\,d\tau\qquad\text{for every $\varphi\in C^\infty_c\left((-\infty,\bar\tau)\times\hh^n\right)$.}
\end{split}
\end{equation}
 Since $\left\langle\bb, \nabla u_\eps\right \rangle-u_\eps c+\mathscr C_\eps\in L^1_{\mathrm{loc}}\left((-\infty,\bar\tau)\times \hh^n\right)$, \eqref{dertempdebinproof} implies that $\partial_\tau u_\varepsilon \in L^1_{\mathrm{loc}}\left((-\infty,\bar\tau)\times \hh^n\right)$, and that 
 \begin{equation}\label{eq_regolarized_solution}
     \partial_\tau u_\eps - \left\langle\bb, \nabla u_\eps\right\rangle + cu_\eps = \mathscr C_\eps\qquad\text{a.e.~on $(-\infty,\tau)\times \hh^n$.}
 \end{equation}
Let $\beta \in C^1(\rr)$ be such that $|\beta'| \leq M<\infty$. In particular, $\beta'(u_\eps)\in L^\infty\left((-\infty,\bar\tau)\times \hh^n\right)$. Multiplying \eqref{eq_regolarized_solution} by $\beta'(u_\eps)$ and applying the chain rule $\eqref{chainrulesobolev}$, we deduce that 
\begin{equation}\label{aeperbetaeps}
    \frac{\partial \beta(u_\eps)}{\partial \tau} - \left\langle \bb,\nabla \beta(u_\eps)\right\rangle + cu_\eps\beta'(u_\eps) = \beta'(u_\eps)
    \mathscr C_\eps\qquad\text{a.e.~on $(-\infty,\tau)\times \hh^n$.}
\end{equation}
Fix $\varphi\in C^\infty_c\left((-\infty,\bar\tau)\times \hh^n\right)$. Multiplying \eqref{aeperbetaeps} by $\varphi$ and integrating by parts, we get
\begin{equation*}
      \int_{-\infty}^{\bar{\tau}}\int_{\hh^n}\left(-\beta(u_\eps)\partial_\tau \varphi+\beta(u_\eps)\left(\left\langle \bb, \nabla \varphi\right \rangle+\divv\bb\,\varphi\right)+c u_\eps\beta'(u_\eps)\varphi\right)\,dp\,d\tau=\int_{-\infty}^{\bar{\tau}}\int_{\hh^n}\beta'(u_\eps)\mathscr C_\epsilon\varphi\,dp\,d\tau.
\end{equation*}
First, by \Cref{lem_commutation} and recalling that $ \beta'$ is bounded,
\begin{equation*}
    \int_{-\infty}^{\bar{\tau}}\int_{\hh^n}\beta'(u_\eps)\mathscr C_\eps\varphi\,dp\,d\tau\xrightarrow[\eps\searrow 0]{}0.
\end{equation*}
For the left-hand side, observe first that, by the dominated convergence theorem,
\begin{equation*}
\begin{split}
    \left\vert \int_{-\infty}^{\bar{\tau}} \int_{\hh^n} \left(\beta (u_\varepsilon)-\beta( u)\right)\partial_\tau \varphi\,dp\,d\tau \right\vert
     \leq M\int_{-\infty}^{\bar{\tau}} \norm*{\partial_\tau \varphi}_{L^{s'}(\hh^n)} \norm*{u_\varepsilon - u}_{L^s(\hh^n)}\,d\tau \xrightarrow[\varepsilon \searrow 0]{} 0
\end{split}
\end{equation*}
since $u_\varepsilon (\tau,\cdot)\xrightarrow[\varepsilon \searrow 0]{L^s} u(\tau,\cdot)$ for a.e.~$\tau\in (-\infty,\bar\tau)$ by \Cref{prop_group_mollification} and
\begin{equation*}
\norm*{\partial_\tau \varphi}_{L^{s'}(\hh^n)} \norm*{u_\varepsilon - u}_{L^s(\hh^n)} \overset{\eqref{eq_Young_inequality}}{\leq} 2\norm*{\partial_\tau \varphi}_{L^{s'}(\hh^n)} \norm*{u}_{L^s(\hh^n)} \in L^1(0,\bar{\tau}) \qquad \text{for every $\varepsilon>0$.}
\end{equation*}
Let $\Om\Subset\hh^n$ be such that $\mathrm{supp\,}\varphi\subseteq(-\infty,\bar\tau)\times\Om$. Then, arguing as above,
\begin{equation*}
        \left| \int_{-\infty}^{\bar{\tau}}\int_{\hh^n}\left(\beta(u_\eps)-\beta(u)\right)\left\langle \bb, \nabla \varphi\right \rangle\,dp\,d\tau\right|\leq
        M\int_{-\infty}^{\bar{\tau}} \norm*{|\bb|}_{L^{s'}(\Om)} \norm*{|\nabla \varphi|}_{L^{\infty}(\Om)}\norm*{u_\varepsilon - u}_{L^s(\Om)}\,d\tau 
        \xrightarrow[\varepsilon \searrow 0]{} 0
\end{equation*}
and
\begin{equation*}
        \left| \int_{-\infty}^{\bar{\tau}}\int_{\hh^n}\left(\beta(u_\eps)-\beta(u)\right)\divv \bb\, \varphi\,dp\,d\tau\right|\leq M
         \int_{-\infty}^{\bar{\tau}} \norm*{\varphi}_{L^{\infty}(\hh^n)}\norm*{\divv\bb}_{L^{s'}(\Om)} \norm*{u_\varepsilon - u}_{L^s(\Om)}\,d\tau 
         \xrightarrow[\varepsilon \searrow 0]{} 0.
\end{equation*}
Finally, fix any subsequence of $\{u_\eps\}_{\eps>0}$, still denoted by $\{u_\eps\}_{\eps>0}$. Recall that $u_\varepsilon \xrightarrow[\varepsilon \searrow 0]{L^s} u$ by \Cref{prop_group_mollification}, so $u_\varepsilon \xrightarrow[\varepsilon \searrow 0]{} u$ pointwise a.e.~in $(0,\bar\tau)\times\hh^n$ up to a further subsequence. By the continuity of $\beta'$, $\beta'(u_\varepsilon)u_\eps- \beta'(u)u\xrightarrow[\varepsilon \searrow 0]{}0 $ pointwise a.e. Moreover, arguing as above, 
\begin{equation*}
    \|\beta' (u_\varepsilon)u_\varepsilon - \beta'(u)u]c\varphi\|_{L^1(\Om)}\leq  3M\norm*{\varphi}_{L^\infty(\hh^n)}\norm*{c}_{L^{s'}(\Om)} \norm*{u}_{L^{s}(\Om)},
\end{equation*}
whence
\begin{equation*}
    \left\vert \int_{-\infty}^{\bar{\tau}} \int_{\hh^n} \left[\beta' (u_\varepsilon)u_\varepsilon - \beta'(u)u\right]c\varphi\,dp\,d\tau \right\vert\xrightarrow[\varepsilon \searrow 0]{} 0.
    \end{equation*}
In conclusion, we proved that, for every $\varphi\in C^\infty_c\left((-\infty,\bar\tau)\times\hh^n\right)$,
\begin{equation}\label{prefineblim}
      \int_{-\infty}^{\bar{\tau}}\int_{\hh^n}\left[-\beta(u)\partial_\tau \varphi+\beta(u)\left(\left\langle b, \nabla \varphi\right \rangle+\divv b\varphi\right)+c u\beta'(u)\varphi\right]\,dp\,d\tau=0.
\end{equation}

Fix $\varphi\in C^\infty_c\left([0,\bar\tau)\times\hh^n\right)$, and extend it in such a way that $\varphi\in C^\infty_c\left((-\infty,\bar\tau)\times\hh^n\right)$. Then
\begin{equation*}
    \begin{split}
        0\overset{\eqref{prefineblim}}&{=}\int_{-\infty}^{\bar{\tau}}\int_{\hh^n}\left[-\beta(u)\partial_\tau \varphi+\beta(u)\left(\left\langle \bb, \nabla \varphi\right \rangle+\divv \bb\,\varphi\right)+c u\beta'(u)\varphi\right]\,dp\,d\tau\\
        &= -\int_{\hh^n}\beta(u_0)(p)\left(\int_{-\infty}^0\partial_\tau \varphi\,d\tau\right)\,dp+\int_{0}^{\bar{\tau}}\int_{\hh^n}\left[-\beta(u)\partial_\tau \varphi+\beta(u)\left(\left\langle \bb, \nabla \varphi\right \rangle+\divv \bb\,\varphi\right)+c u\beta'(u)\varphi\right]\,dp\,d\tau\\
        &\quad -\int_{\hh^n}\beta(u_0)(p)\varphi(0,p)\,dp+\int_{0}^{\bar{\tau}}\int_{\hh^n}\left[-\beta(u)\partial_\tau \varphi+\beta(u)\left(\left\langle \bb, \nabla \varphi\right \rangle+\divv \bb\,\varphi\right)+c u\beta'(u)\varphi\right]\,dp\,d\tau,
    \end{split}
\end{equation*}
where the last equality follows by \eqref{extensiondelellis}. Therefore, $u$ is a renormalized solution. When $s=\infty$, the previous proof applies directly for every $\beta \in C^1(\rr)$, because $\beta'$ is bounded on 
$\left[-\norm*{u}_{L^\infty(\hh^n)},\norm*{u}_{L^\infty(\hh^n)}\right]$ (recall \eqref{eq_Young_inequality}). The proof of \Cref{teo_distributional_implies_renormalized} is concluded.
\end{proof}

\begin{proof}[Proof of \Cref{teo_uniqueness_null_solution}] 
Owing to \Cref{teo_distributional_implies_renormalized}, it follows \emph{verbatim} that of  \cite[Theorem II.2]{MR1022305}, with the only difference that one has to use cut-off functions $\phi_R$ adapted
to the $\hh^n$ geometry, namely $\phi_R(p)=\phi(\delta_{1/R}(p))$
with $\phi\in C^\infty_c(\hh^n)$ identically equal to 1 on the
ball $B(0,1)$.
\end{proof}
\section{Consequences of the renormalization property}\label{sec:consequences}
In this section, we briefly recall the most relevant consequences of the renormalization property.
\subsection{Transport equation with measurable initial conditions}
If one wishes to deal with \eqref{eq_transport_equation} imposing only measurability on the initial condition, the notion of distributional solution to \eqref{eq_transport_equation} is no longer meaningful. To this aim, the authors of \cite{MR1022305} defined solutions by imposing the validity of the renormalization property with a careful choice of test functions.
We limit ourselves to provide the main definitions and statements, referring to \cite{MR1022305} for further details and more exhaustive results. 

The set of functions $L^0(\hh^n)$ is defined by
\begin{equation*}
L^0(\hh^n)=\left\{u:\hh^n \to [-\infty,\infty] \text{ measurable} : \abs*{u}<\infty \text{ a.e., }\mathcal{L}^N(\{\abs*{u}>\lambda\})<\infty \,\, \text{for every }\,\lambda>0\right\}.
\end{equation*}
For a precise definition of $L^\infty\left(0,\bar{\tau};L^0(\hh^n)\right)$, see \cite{MR1022305}. 
Observe that, if $u \in L^0(\hh^n)$ and $\beta \in C(\rr) \cap L^\infty(\rr)$ vanishes near $0$, 
then $\beta(u)\in L^1(\hh^n) \cap L^\infty(\hh^n)$. Hence, the following definition is well-posed.

\begin{definition}
Let $u_0 \in L^0(\hh^N)$ and assume that $\bb\in\VfH{n}$ and $c$ satisfy
\begin{equation} \label{eq_b,c_conditions}
\begin{split}
    &c, \divv \bb \in L^1\left(0,\bar{\tau};L^\infty(\hh^n)\right),\\  
    &\frac{\abs*{\bb}}{1+d(p,0)} \in L^1\left(0,\bar{\tau};L^1(\hh^n)\right)+L^1\left(0,\bar{\tau};L^\infty(\hh^n)\right).
\end{split}
\end{equation}
We say that $u \in L^\infty\left(0,\bar{\tau};L^0(\hh^n)\right)$ is a \emph{renormalized solution} to \eqref{eq_transport_equation} if $\beta (u)$ solves \eqref{eq_renormalized_solution} in the sense of distributions for every $\beta \in C^1(\rr) \cap L^\infty(\rr)$ such that $\beta$ vanishes near $0$ and $\beta'(1+\abs*{t}) \in L^\infty(\rr)$. 
\end{definition}
The following result (cf.~\cite[Theorem II.3]{MR1022305} and \cite[Theorem II.4]{MR1022305}) ensures the well-posedness of the transport equation with initial condition in $L^0$.  

\begin{theorem}
Assume \eqref{eq_b,c_conditions} and that $\bb\in\VfH{n}$ is a contact vector field as in \Cref{teo_distributional_implies_renormalized}. Then there exists a unique renormalized solution $u \in L^\infty\left(0,\bar{\tau};L^0(\hh^n)\right)$ to \eqref{eq_transport_equation} corresponding to any initial condition $u_0\in L^0(\hh^n)$. 
Furthermore, $u$ belongs to $C\left([0,\bar{\tau}];L^0(\hh^n)\right)$ and is stable under perturbations of the data.
\end{theorem}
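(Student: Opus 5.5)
The plan is to reduce the statement to the DiPerna--Lions theory in $L^0$, \cite[Theorems II.3 and II.4]{MR1022305}, using \Cref{teo_distributional_implies_renormalized} as the only genuinely new input. Those proofs use two ingredients beyond soft functional analysis. The first is the renormalization property for bounded distributional solutions, with $c\in L^1_{\mathrm{loc}}$. The second is the existence and uniqueness of solutions in $L^\infty(0,\bar\tau;L^1\cap L^\infty)$ under the growth condition \eqref{eq_b,c_conditions}. The first ingredient is \Cref{teo_distributional_implies_renormalized} with $s=\infty$. This applies because $\psi\in L^1(0,\bar\tau;W^{2,1}_{\hhh,\mathrm{loc}})$ suffices, since \Cref{lem_commutation} is purely local. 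The second ingredient is \Cref{teo_uniqueness_null_solution} together with \Cref{prop_existence}.

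\emph{Existence.} Given $u_0\in L^0(\hh^n)$, I would truncate it to $u_0^k=\max(-k,\min(k,u_0))\,\mathbf 1_{\{|u_0|>1/k\}}$. Each $u_0^k$ lies in $L^1\cap L^\infty$, so \Cref{prop_existence} yields solutions $u^k\in L^\infty(0,\bar\tau;L^1\cap L^\infty)$, and these are renormalized by \Cref{teo_distributional_implies_renormalized}. Applying the renormalization with $\beta(t)=$ a bounded smooth approximation of $|t|\wedge 1$ to the difference $u^k-u^h$ (a solution by linearity), and testing against the cutoffs $\phi_R(p)=\phi(\delta_{1/R}p)$, gives a Gronwall estimate. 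In that estimate the term $\langle\bb,\nabla\phi_R\rangle$ is controlled by \eqref{eq_b,c_conditions}, because $|\nabla\phi_R|\lesssim R^{-1}$ on its support, which lies at distance $\sim R$ from $0$ by homogeneity of $d$. The bounded divergence and $c$ give the exponential factor. The resulting estimate is
\[
\sup_\tau\int\beta(u^k-u^h)\,dp\le C\int\beta(u^k_0-u^h_0)\,dp,
\]
so $u^k$ is Cauchy in $C([0,\bar\tau];L^0)$ for the metric $\int\min(|f|,1)$. The limit is a renormalized solution, because $\beta(u^k)\to\beta(u)$ in $L^1_{\mathrm{loc}}$ for the admissible class of $\beta$ (bounded, vanishing near $0$, with $\beta'(1+|t|)$ bounded).

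\emph{Uniqueness and stability.} For two renormalized solutions $u,v$ with the same datum, the difficulty is that $u-v$ is not a priori renormalized. Following DiPerna--Lions, I would first pass to bounded truncations $\beta_M(u)$. These solve a transport equation with the bounded reaction term $c\,u\beta_M'(u)/\beta_M(u)$, and by \Cref{teo_distributional_implies_renormalized} they are again renormalizable. The pair $(\beta_M(u),\beta_M(v))$ then solves a linear system whose difference is bounded, and the Gronwall argument above, combined with $\mathcal L^N(\{|u|>\lambda\})<\infty$, gives $\beta_M(u)=\beta_M(v)$ for all $M$. Stability with respect to the data follows from the same $L^0$-contraction estimate. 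Time continuity in $L^0$ comes from the weak continuity of $\beta(u)$ combined with the conservation of $\int\beta(u)^2$ obtained by renormalizing with $\beta^2$, exactly as in \cite{MR1022305}.

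The main obstacle is this uniqueness step, specifically renormalizing the difference of two solutions whose reaction terms involve $u$ and $v$ nonlinearly. I would handle it by invoking the DiPerna--Lions argument essentially verbatim. The only check needed is that every commutator appearing in it is of the form treated in \Cref{lem_commutation}, with $c\in L^1_{\mathrm{loc}}$, so that no Euclidean structure of $\bb$ is used. The only geometric modification is replacing Euclidean cutoffs by the $\phi_R$ above, as in the proof of \Cref{teo_uniqueness_null_solution}.
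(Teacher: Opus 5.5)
Your reduction is the one the paper uses. The paper obtains this statement only by citing \cite[Theorems II.3 and II.4]{MR1022305}, with \Cref{teo_distributional_implies_renormalized} (for $s=\infty$) as the new ingredient and the cut-offs $\phi_R=\phi\circ\delta_{1/R}$. However, the steps you write out to fill in that citation fail as stated.

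\emph{Existence.} The functional $f\mapsto\int\min(|f|,1)\,dp$ is not finite on $L^0(\hh^n)$, and the truncated data need not be Cauchy for it. For $u_0=(1+d(\cdot,0))^{-1}\in L^0(\hh^n)$ and $h>k\ge 1$ one has $u_0^h-u_0^k=u_0\mathbf 1_{\{k-1\le d(\cdot,0)<h-1\}}$, so $\int\min(|u_0^h-u_0^k|,1)\,dp\to\infty$ as $h\to\infty$. Your (correct) contraction estimate therefore gives nothing. Convergence in $L^0$ has to be measured on level sets $|\{|f|>\lambda\}|$, i.e.\ with renormalizations vanishing near $0$. But no nonzero $C^1$ function vanishing near $0$ satisfies $|t\beta'(t)|\le C|\beta(t)|$ (by Gronwall from the edge of its zero set), so the term $c\,w\beta'(w)$ no longer closes Gronwall at a fixed level. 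One remedy is to let the level grow in time. Take $\theta\in C^1$ nondecreasing, with $\theta=0$ on $[0,\lambda/2]$ and $\theta=1$ on $[\lambda,\infty)$, and set $\Lambda(\tau)=\int_0^\tau\|c(\sigma,\cdot)\|_{L^\infty}\,d\sigma$. Then $G=\theta(|w|e^{-\Lambda(\tau)})$ satisfies $\partial_\tau G-\langle\bb,\nabla G\rangle=-(\|c\|_{L^\infty}+c)\,\theta'(|w|e^{-\Lambda})\,|w|e^{-\Lambda}\le 0$. This yields $|\{|w(\tau)|\ge\lambda e^{\Lambda(\tau)}\}|\le e^{\|\divv\bb\|_{L^1(0,\bar\tau;L^\infty)}}\,|\{|w(0)|>\lambda/2\}|$, which is the estimate that gives the Cauchy property in $C([0,\bar\tau];L^0(\hh^n))$ and stability.

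\emph{Uniqueness.} This step does not close either. If $\beta_M$ is admissible, the coefficient $c\,u\beta_M'(u)/\beta_M(u)$ is unbounded near the edge of the zero set of $\beta_M$, by the same remark. If instead $\beta_M=T_M$ is a plain truncation, then $T_M(u)$ and $T_M(v)$ solve equations with the different coefficients $c\mathbf 1_{\{|u|<M\}}$ and $c\mathbf 1_{\{|v|<M\}}$. Hence $w_M=T_M(u)-T_M(v)$ solves $\partial_\tau w_M-\langle\bb,\nabla w_M\rangle=-c\,(u\mathbf 1_{\{|u|<M\}}-v\mathbf 1_{\{|v|<M\}})$. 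This right-hand side differs from $-c\,w_M$ by a term of size $|c|M$ on $\{|u|\ge M\}\cup\{|v|\ge M\}$, and $M\,|\{|u|\ge M\}|$ need not tend to $0$ for $u\in L^0$ (take $u=d(\cdot,0)^{-Q}\mathbf 1_{B(0,1)}$). Moreover, the ``Gronwall argument above'' again involves $\int\min(|w_M|,1)$, which may be infinite.

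What is missing is a joint renormalization of $u$ and $v$ with scale-invariant cut-offs. \Cref{lem_commutation} is linear in $u$, so it applies componentwise to the bounded solutions $\arctan u$ and $\arctan v$. These are limits of admissible renormalizations and solve the equation with bounded sources $-c\,u/(1+u^2)$ and $-c\,v/(1+v^2)$. Composing with $G_M=\beta(u-v)\,\eta(|u|/M)\,\eta(|v|/M)$ shows that $\beta(u-v)$ solves the equation with reaction $c\,(u-v)\beta'(u-v)$, up to an error bounded by $C|c|\,(\mathbf 1_{\{M\le|u|\le 2M\}}+\mathbf 1_{\{M\le|v|\le 2M\}})$. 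This error does vanish as $M\to\infty$, and the level-set estimate above with $w(0)=0$ then gives $u=v$.
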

\subsection{Regular Lagrangian Flows}
We sketch how this theory applies to provide a robust existence and uniqueness theory for ODE's (properly understood) associated
to weakly differentiable velocity fields, besides \cite{MR1022305,MR2096794}, see also \cite{MR3283066} for Lecture Notes on this topic. Here we follow the axiomatization introduced in \cite{MR2096794}, based on the
continuity equation
\begin{equation}\label{eq_continuity_equation}
\displaystyle{\frac{\partial u}{\partial \tau} +\divv(\bb u)= 0} \qquad \text{in $(0,\bar{\tau}) \times \hh^n$}
\end{equation}
and the induced semigroup $\tau\mapsto u(\tau,\cdot)$, thought when $u\geq 0$ as a semigroup in the space of probability densities,
rather than the original one of \cite{MR1022305}, based on the transport equation (see \cite[Remark~6.7]{MR2096794} for a more precise comparison
between the two approaches, the one based on \eqref{eq_continuity_equation} being more suitable to consider singular cases when
$\divv\bb$ is not absolutely continuous). In the classical setting, via the theory of characteristics, the ODE
\begin{equation} \label{eq_ode}
\begin{cases}
\displaystyle{\frac{d}{d\tau} \Phi(\tau,p)=\bb(\tau,\Phi(\tau,p))}\\
\Phi(0,p)=p.
\end{cases}
\end{equation}
is related to the transport equation \eqref{eq_transport_equation} (with $\bb$ replaced by $-\bb$) 
\begin{equation}\label{eq_new_transport_equation}
\displaystyle{\frac{\partial u}{\partial \tau} + \left\langle \bb,\nabla u \right\rangle + cu = 0} \qquad \text{in $(0,\bar{\tau}) \times \hh^n$}
\end{equation} 
and to the continuity equation \eqref{eq_continuity_equation} respectively by
\begin{equation}
u(s,\Phi(s,p))=u_0(p)-\int_0^s c(\tau,\Phi(\tau,p))\,d\tau\quad\forall s\in [0,\bar\tau),
\qquad
u(s,\Phi(s,p))=\frac{u(0,p)}{{\rm det}\nabla_p\Phi(s,p)} \quad\forall s\in [0,\bar\tau),
\end{equation} 
while the relation between the two PDE's simply comes with the choice $c=\divv\bb$.

\begin{definition} \label{def:RLF} We say that $\Phi(\tau,p)$ is a Regular Lagrangian Flow (RLF) associated to $\bb$ if
\begin{itemize}
\item[(a)] for a.e.~$p$ one has 
$$\Phi(s,p)=p+\int_0^s\bb(\tau,\Phi(\tau,p))\,d\tau\qquad\text{for all $s\in [0,\bar\tau)$;}$$
\item[(b)] there exists a constant $C\in (0,\infty)$ such that $\Phi(s,\cdot)_\#\mathcal{L}^N\leq C\mathcal{L}^N$ for all $s\in [0,\bar\tau)$.
\end{itemize}
\end{definition}

The main result in \cite{MR2096794}  (here we adapt it to the setting
of $\hh^n$) is that, under the growth condition
\begin{equation*}
\frac{|\bb|}{1+d(p,0)} \in L^1\left(0,\bar{\tau};L^1(\hh^n)\right)+L^1\left(0,\bar{\tau};L^\infty(\hh^n)\right)
\end{equation*}
on $\bb$, existence and uniqueness for \eqref{eq_continuity_equation}
for any nonnegative initial condition $u_0\in L^\infty(\hh^n)$
in the class of nonnegative $u$ in $L^\infty_{\mathrm{loc}}\left([0,\tau);L^\infty(\hh^n)\right)
\cap L^\infty_{\mathrm{loc}}\left([0,\tau);L^1(\hh^n)\right)$ 
is equivalent to existence and uniqueness of the RLF. In particular,  \Cref{teo_uniqueness_null_solution} applied with $s=\infty$ and with
 the assumption $\divv\bb\in L^1_{\mathrm{loc}}\left([0,\tau);L^1(\hh^n)\right)$ grants uniqueness, while a simple smoothing argument based on the assumption
 $[\divv\bb]^-\in L^1_{\mathrm{loc}}\left([0,\tau);L^\infty(\hh^n)\right)$ grants existence, see the above mentioned Lecture Notes for details.

\section{Counterexamples and connection with existing results}\label{sec:connection}
In this section, we emphasize the relevance of our approach by showing how the existing results in the literature cannot generally be applied to our case.
\subsection{Contact velocity fields without Euclidean regularity}
First, we show that contact velocity fields with horizontal Sobolev regularity are, \emph{generically}, not of Euclidean bounded variation, whence the renormalization results obtained in \cite{MR2096794,MR1022305} do not apply. By generically, we refer to the vocabulary of \emph{Baire's categories} (see e.g.~\cite{MR2827930}). 
We recall that, given a metric space $(M,d)$, a set $A\subseteq M$ is of the \emph{first category} if $A$ is contained in a countable union of \emph{nowhere dense} sets. In turn, a set $E\subseteq M$ is said to be nowhere dense if its closure has empty interior. The complement of a set of the first category is called \emph{generic}. If $(M,d)$ is complete and non-empty, Baire's category theorem asserts that generic sets are dense, whence in particular non-empty. 

In order to prove that vector fields with horizontal Sobolev regularity are, generically, not of locally bounded variation, we use the following basic functional analytic lemma.

\begin{lemma} \label{Baire} Let $(X,\|\cdot\|_X)$ be a normed space and let $Y\subseteq X$ be a subspace, endowed with a norm $\|\cdot\|_Y$ such that
\begin{equation}\label{eq:generic}
\sup_{y\in Y : \|y\|_X=1}\|y\|_Y=\infty.
\end{equation}
If the function $g:X\to [0,\infty]$ defined by
\begin{equation}
g(x):=
\begin{cases}
\|x\|_Y &\text{if $x\in Y$}\\
\infty &\text{otherwise}
\end{cases}
\end{equation}
is lower semicontinuous in $X$, then $X\setminus Y$ is generic in $X$.
\end{lemma}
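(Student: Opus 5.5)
The plan is to write $Y$ as a countable union of closed sets with empty interior in $X$. For $k\in\mathbb N$, set
\begin{equation*}
F_k\coloneqq\{x\in X:\ g(x)\leq k\}=\{y\in Y:\ \|y\|_Y\leq k\}.
\end{equation*}
Since $g$ is finite exactly on $Y$, we have $Y=\bigcup_{k}F_k$. Lower semicontinuity of $g$ on $X$ means its sublevel sets are closed, so each $F_k$ is closed in $X$. It then suffices to show that each $F_k$ has empty interior in $X$. Once this is done, $Y$ is of the first category and $X\setminus Y$ is generic by definition.

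For the empty interior, argue by contradiction. Suppose $B_X(x_0,r)\subseteq F_k$ for some $x_0\in X$ and $r>0$.

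\begin{itemize}
\item[(a)] First, $x_0\in F_k\subseteq Y$ with $\|x_0\|_Y\leq k$.
\item[(b)] Take any $y\in Y$ with $\|y\|_X=1$. The point $x_0+\tfrac r2 y$ lies in $B_X(x_0,r)\subseteq F_k$, so $\|x_0+\tfrac r2 y\|_Y\leq k$.
\item[(c)] Since $Y$ is a linear subspace and $\|\cdot\|_Y$ is a norm on it, the triangle inequality gives
\begin{equation*}
\tfrac r2\|y\|_Y\leq \|x_0+\tfrac r2 y\|_Y+\|x_0\|_Y\leq 2k.
\end{equation*}
\item[(d)] Hence $\|y\|_Y\leq 4k/r$ for every $y\in Y$ with $\|y\|_X=1$. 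This contradicts \eqref{eq:generic}.
\end{itemize}

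No step is a real obstacle; the argument is routine. The only point needing care is that the interior is taken in $X$. This is why we use $x_0+\tfrac r2 y$ with $y\in Y$: that point stays in $Y$, so the ball hypothesis and linearity of $Y$ combine as above. Completeness of $X$ is not needed for the statement as phrased, since "generic" just means the complement of a first-category set. Completeness is only needed later, via Baire's theorem, to conclude density and non-emptiness.
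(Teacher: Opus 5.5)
Your proof is correct and follows the same scheme as the paper. The sublevel sets $\{g\le k\}$ are closed by lower semicontinuity and exhaust $Y$, and none of them can contain a ball, since translating and rescaling inside $Y$ with the triangle inequality for $\|\cdot\|_Y$ would bound the supremum in \eqref{eq:generic}. The only difference is that the paper first reduces to $Y$ dense and then picks an approximating center $y_0\in Y$, whereas you observe that the center $x_0$ already lies in $F_k\subseteq Y$, which makes that density reduction unnecessary.
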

\begin{proof} Without loss of generality, we can assume that $Y$ is dense in $X$ (since any closed proper subspace of $X$ is of first
category). It is sufficient to prove that the closed sets $A_k=\{x: g(x)\leq k\}$, whose union is $Y$, have empty interior. If for some integer $k$ this does not happen, there exist $x_0\in X$ and $\varepsilon>0$ such that $\|x-x_0\|_X<\varepsilon$ implies $g(x)\leq k$. The density of $Y$ in
$X$ grants the existence of $y_0\in Y$ such that $\|x-y_0\|_X<\varepsilon/2$ implies $g(x)\leq k$. In particular, 
$\|x-y_0\|_X<\varepsilon/2$ implies $g(x-y_0)\leq k+\|y_0\|_Y$. This last implication gives that the supremum in \eqref{eq:generic} is bounded by $(2k+2\|y_0\|_Y)/\varepsilon$, a contradiction.
\end{proof}

In the class of contact vector fields $\bb=\bb(\psi)$ parametrized by $\psi\in W^{2,s}(\hh^n)$ as in \eqref{contact_vector_field} and with the induced topology we can prove the following genericity result. The strategy of proof is to use highly oscillating test functions to provide estimates on different scales for derivatives along different directions. In particular, we will produce competitors whose oscillation in the vertical direction is predominant compared to horizontal derivatives. Accordingly, this approach grants the existence of contact vector fields with horizontal Sobolev regularity and unbounded vertical variation. In connection with Baire's category arguments, by contrast, this could be compared with the main result of \cite{MR1648524}, which roughly speaking asserts that the class of renormalizable Euclidean vector fields is generic in $L^1$. The strategy of proof of \cite{MR1648524} is to consider the class of vector fields which can be approximated in $L^1$ by Lipschitz ones at a rate faster than the inverse of their Lipschitz constant.

\begin{theorem}\label{thm_genericity1}
    The set
    \begin{equation*}
     \left \{\psi\in W^{2,s}_\hhh(\hh^n)\,:\,\,b_1(\psi),\ldots,b_{2n}(\psi)\notin BV_{\mathrm{loc}}(\rr^N)\right\}
    \end{equation*}
    is generic, and in particular dense, in $W^{2,s}_\hhh(\hh^n)$.
\end{theorem}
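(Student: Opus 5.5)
The plan is to apply \Cref{Baire} once for each $j=1,\ldots,2n$, and then intersect. Fix $j$ and a nonempty bounded open set $U\Subset\rr^N$; by exhausting $\rr^N$ with countably many such sets, it suffices to work with one of them. Set $X=W^{2,s}_\hhh(\hh^n)$ and
\begin{equation*}
Y=Y_{j,U}=\{\psi\in X\,:\,b_j(\psi)\in BV(U)\},\qquad \|\psi\|_Y=\|\psi\|_X+|Db_j(\psi)|(U).
\end{equation*}
Here $b_j(\psi)=\pm Z_{j\pm n}\psi$ by \eqref{contact_vector_field}, and $|Db_j(\psi)|(U)$ is the Euclidean total variation. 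Once \Cref{Baire} has been applied, the set of $\psi$ with $b_j(\psi)\notin BV_{\mathrm{loc}}$ contains $X\setminus Y_{j,U_k}$ for every $k$, so it is generic. Since a finite intersection of generic sets is generic, the set of $\psi$ with $b_1,\ldots,b_{2n}$ all outside $BV_{\mathrm{loc}}$ is generic. The case $s=\infty$ is handled in the same way, because \Cref{Baire} requires only a normed space, and $X$ is in any case Banach, so genericity implies density.

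\textbf{Lower semicontinuity of $g$.} Suppose $\psi_k\to\psi$ in $X$. Then $b_j(\psi_k)\to b_j(\psi)$ in $L^s(U)$, hence in $L^1(U)$ since $U$ is bounded. The Euclidean total variation is lower semicontinuous under $L^1_{\mathrm{loc}}$ convergence, and the $X$-norm is continuous. Therefore $\liminf g(\psi_k)\ge g(\psi)$, including the case where the limit inferior is finite and we conclude $\psi\in Y$. This step is routine.

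\textbf{Unboundedness \eqref{eq:generic} (the main step).} I will construct $\psi_\lambda\in C^\infty_c$ with $\|\psi_\lambda\|_X\le C$ and $|Db_j(\psi_\lambda)|(U)\to\infty$ as $\lambda\to\infty$. For concreteness take $j=n+1$, so that $b_j=-X_1\psi$. Fix a cutoff $\chi\in C^\infty_c(U)$ and an exponent $\alpha>0$, and set
\begin{equation*}
\psi_\lambda(x,y,t)=\lambda^{-\alpha}\chi(x,y,t)\sin(\lambda t).
\end{equation*}
Each horizontal derivative $X_i,Y_i$ hitting the oscillating factor produces a factor $\pm 2y_i\lambda$ or $\mp 2x_i\lambda$ times a bounded quantity, so two horizontal derivatives give $\|\psi_\lambda\|_X\lesssim\lambda^{2-\alpha}$. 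On the other hand, the Euclidean derivative $\partial_t b_j(\psi_\lambda)$ contains the term $-2y_1\lambda^2\lambda^{-\alpha}\chi\sin(\lambda t)$, so its $L^1$ norm is of order $\lambda^{2-\alpha}$ as well. With this naive choice the two growth rates coincide, and so this test function gives no gain.

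This is where I expect the real obstacle. The construction must exploit that horizontal derivatives are of order $\lambda$ in $t$-frequency only through the coefficient $y_1$ or $x_1$, whereas $\partial_t$ is ``free''. The way to gain is to choose oscillations along directions of the form $t+\phi(x,y)$ for which the horizontal derivatives cancel. Concretely, I will make the phase $\Theta(x,y,t)=t+2\langle x,y\rangle$-type adapted so that $X_1\Theta=0$ (for instance $\Theta=t-2x_1y_1$, which gives $X_1\Theta=-2y_1+2y_1=0$), while $Y_1\Theta\neq 0$. I will then take
\begin{equation*}
\psi_\lambda=\lambda^{-\alpha}\chi\,h(\lambda\Theta)
\end{equation*}
with $h$ periodic, and tune $\alpha$ together with an anisotropic localization of $\chi$ to a box of side $\lambda^{-1}$ in $y_1$. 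The goal is to make $\|Z_iZ_k\psi_\lambda\|_{L^s}$ bounded while $\partial_t X_1\psi_\lambda$ or $\partial_{y_1}X_1\psi_\lambda$ has $L^1$ mass growing like a positive power of $\lambda$, using the mismatch between the $L^s$ and $L^1$ scalings on the shrinking support. Alternatively, if a fixed-support example proves hard, I will sum dilated and translated copies $\psi_k\circ\delta_{r_k}$ at many scales: $W^{2,s}_\hhh$ norms scale with homogeneous dimension $Q$, while Euclidean total variation in $t$ scales like Euclidean dimension, and I expect this discrepancy to yield the divergence. Verifying one of these two constructions in detail is the crux of the proof.
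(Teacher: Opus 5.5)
Your reduction is the same as the paper's: \Cref{Baire} with $X=W^{2,s}_\hhh(\hh^n)$ and $Y=\{\psi : b_j(\psi)\in BV(U)\}$, lower semicontinuity from the $L^1$-lower semicontinuity of the total variation, and a finite intersection over $j$. All of that is fine. The gap is that the one substantive step is not carried out: you never exhibit $\psi$ with $|Db_j(\psi)|(U)/\|\psi\|_{W^{2,s}_\hhh}$ arbitrarily large. Your explicit candidate fails, as you note. The two fallbacks are unverified, and as described they point the wrong way. With $\Theta=t-2x_1y_1$ only $X_1\Theta$ vanishes, while $Y_1\Theta=-4x_1$, and $X_k\Theta=2y_k$, $Y_k\Theta=-2x_k$ for $k\geq 2$. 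Localizing $y_1$ at scale $\lambda^{-1}$ only makes $Y_1Y_1\psi_\lambda$ of order $\lambda^{2-\alpha}$, against $\partial_t b_j(\psi_\lambda)$ of order $\lambda^{1-\alpha}$ on the same support, so the ratio tends to $0$. What is needed is localization in $x_1$ (and in $x_k,y_k$) at scale $\lambda^{-1/2}$, which is just an intrinsically dilated bump. The heuristic behind your multiscale sum is also not the mechanism: under $\delta_r$, Lebesgue measure contributes $r^Q$ to both quantities, so there is no ``homogeneous versus Euclidean dimension'' gain.

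The missing idea is a degree count. Up to sign, $\partial_t b_j(\psi)$ equals $TZ_i\psi$ for a suitable $i\le 2n$, which has homogeneous degree $3$ because $T=\partial_t$ has degree two. By contrast, $\|\psi\|_{W^{2,s}_\hhh}$ only sees derivatives of degree at most $2$. The paper therefore concentrates $\psi$ at the intrinsic scales $(\beta,\beta^2)$, taking $\psi(w,t)=\varphi(w/\beta)g(t/\delta)$ with $\delta=M\beta^2$. The term $\frac{1}{\beta\delta}(\partial_{w_i}\varphi)(w/\beta)g'(t/\delta)$ of $TZ_i\psi$ has $L^1$ norm $\beta^{2n-1}\|\partial_{w_i}\varphi\|_{L^1}\|g'\|_{L^1}$. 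The constant $M$ is chosen so that the competing term $\frac{2}{\delta^2}\JJ(w)_i\varphi g''$ (with $|\JJ(w)|\le\beta$ on the support) costs at most half of this. Meanwhile the $L^1$ norms of $\psi$, $Z_j\psi$ and $Z_kZ_j\psi$ are all $O(\beta^{2n})$.

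These are $L^1$ bounds, so they settle $s=1$. For $s>1$, homogeneity shows that a single such bump gives a ratio of order $\beta^{Q(1-1/s)-1}$, which diverges only for $s<Q/(Q-1)$. To cover every $s\in[1,\infty]$, sum about $\beta^{-Q}$ left translates of the bump with disjoint supports in $U$. Since $Z_j$, $T=\partial_t$ and $\mathcal L^N$ are left-invariant, this gives $\|\psi\|_{W^{2,s}_\hhh}\lesssim\beta^{-2}$ and $|Db_j(\psi)|(U)\geq\|\partial_t b_j(\psi)\|_{L^1(U)}\gtrsim\beta^{-3}$. This single-scale tiling is the workable form of your ``many copies'' idea.
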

\begin{proof}
We fix a bounded open set $\Om\subseteq\rr^N$ and $i\in\{1,\ldots,2n\}$. We assume without loss of generality that
$0\in\Om$ and we are going to apply \Cref{Baire} with $X=W^{2,s}_\hhh(\hh^n)$ and the subspace
$$
Y\coloneqq \left\{\psi\in W^{2,s}_\hhh(\hh^n) : b_i(\psi)\in BV(\Omega)\right\}
$$
endowed with the norm $\|\psi\|_{W^{2,s}_\hhh(\hh^n)}+|Db_i(\psi)|(\Omega)$.
Notice that the standard formula of the $BV$ theory 
$$
|Db_j(\psi)|(\Omega)=\sup\left\{\int_\Omega b_i(\psi)\divv F\, dp : F\in C^1_c(\Omega;\rr^N),\,\,|F|\leq 1\right\}
$$
grants that the lower semicontinuity assumption of \Cref{Baire} is satisfied. Hence, it will be sufficient to show that there is no constant $C$ such that
\begin{equation}\label{eq:Baire1}
|D b_i(\psi)|(\Om)\leq C\|\psi\|_{W^{2,s}_\hhh(\hh^n)}\qquad\forall \psi\in W^{2,s}_\hhh(\hh^n).
\end{equation}
Let an open Euclidean ball $B\subseteq B(0,1)$ of $\rr^{2n}$ and an open interval $I\subseteq (-1,1)$, both centered at $0$, be chosen in such a way that $\overline B\times \overline I\subseteq\Om$. Fix $\varphi\in C^\infty_c(\rr^{2n})$ such that $\mathrm{supp}(\varphi)\subseteq B$, $\|\varphi\|_{L^1(B)}=1$ and $|\partial _{w_i}\varphi\|_{L^1(B)}\neq 0$. Fix $g\in C^\infty_c(\rr)$ such that $\mathrm{supp}(g)\subseteq I$ and $\|g\|_{L^1(I)}=1$. Denote points in $\rr^{2n}$ by $w$. For $\beta,\delta >0$, set
   \begin{equation*}
   \psi(w,t)=\varphi\left(\frac{w}{\beta}\right)g\left(\frac{t}{\delta}\right)\qquad\text{for every $w\in\rr^{2n},\,t \in\rr$.}
   \end{equation*}
    For $\beta$ and $\delta$ sufficiently small, $\mathrm{supp}(\psi)\subseteq B\times I$. Recall that $Z_j=\partial_{w_j}-2\JJ(w)_j\partial _t$ for every $j=1,\ldots,2n$. For every $j,k=1,\ldots,2n$, set $c_{jk}=-1$ if $j=1,\ldots,n$ and $k=j+n$, $c_{jk}=1$ if $j=n+1,\ldots,2n$ and $k=j-n$ and $c_{jk}=0$ otherwise. Then
    \begin{align*}
        Z_j\psi(w,t)&=\frac{1}{\beta}\left(\partial_{w_j}\varphi\right)\left(\frac{w}{\beta}\right)g\left(\frac{t}{\delta}\right)-\frac{2}{\delta}\JJ(w)_j\varphi\left(\frac{w}{\beta}\right)g'\left(\frac{t}{\delta}\right),\\
        Z_kZ_j\psi(w,t)&=\frac{1}{\beta^2}\left(\partial_{w_k}\partial_{w_j}\varphi\right)\left(\frac{w}{\beta}\right)g\left(\frac{t}{\delta}\right)-\frac{2}{\delta}c_{jk}\varphi\left(\frac{w}{\beta}\right)g'\left(\frac{t}{\delta}\right)-\frac{2}{\beta\delta}\JJ(w)_j\left(\partial_{w_k}\varphi\right)\left(\frac{w}{\beta}\right)g'\left(\frac{t}{\delta}\right)\\
        &\quad-\frac{2}{\beta\delta}\JJ(w)_k\left(\partial_{w_j}\varphi\right)\left(\frac{w}{\beta}\right)g'\left(\frac{t}{\delta}\right)+\frac{4}{\delta^2}\JJ(w)_j\JJ(w)_k\varphi\left(\frac{w}{\beta}\right)g''\left(\frac{t}{\delta}\right),\\
        TZ_i\psi(w,t)&=\frac{1}{\beta\delta}\left(\partial_{w_i}\varphi\right)\left(\frac{w}{\beta}\right)g'\left(\frac{t}{\delta}\right)-\frac{2}{\delta^2}\JJ(w)_i\varphi\left(\frac{w}{\beta}\right)g''\left(\frac{t}{\delta}\right).
    \end{align*}
    for every $j,k=1,\ldots,2n$ and every $(w,t)\in\Om$. Set
    \begin{equation*}
        F_1=1+\sum_{j=1}^{2n}\|\partial_{w_j}\varphi\|_{L_1(B)},\qquad F_2=F_1+\sum_{j,k=1}^{2n}\|\partial_{w_k}\partial_{w_j}\varphi\|_{L_1(B)},\qquad G_1=\|g'\|_{L_1(I)},\qquad G_2=\|g''\|_{L^1(I)}.
    \end{equation*}
    Then
    \begin{align*}
        \|\psi\|_{L^1(\Om)}&=\beta^{2n}\delta,\\
        \|Z_j\psi\|_{L^1(\Om)}&\leq \beta^{2n-1}\delta F_1+2\beta^{2n+1}G_1,\\
        \|Z_kZ_j\psi\|_{L^1(\Om)}  &\leq  \beta^{2n-2}\delta F_2+2\beta^{2n}\left(G_1+2F_1 G_1\right)+\frac{4\beta^{2n+2}}{\delta}G_2,\\
        \|TZ_i\psi\|_{L^1(\Om)}&\geq \beta^{2n-1}\|\partial _{w_i}\varphi\|_{L^1(B)}G_1-\frac{2\beta^{2n+1}}{\delta}G_2.
    \end{align*}
    In particular, if we choose $\delta=M\beta^2$, with $M= 4G_2/(\|\partial _{w_i}\varphi\|_{L^1(B)}G_1)$, in such a way that $\|TZ_i\psi\|_{L^1(\Om)}$ is
    larger than $\beta^{2n-1}\|\partial _{w_i}\varphi\|_{L^1(B)}G_1/2$, we see that all the terms in the first three lines above are $O(\beta^{2n})$. Hence, there is no constant $C$
    such that \eqref{eq:Baire1} holds.
    \end{proof}
    
\subsection{Contact velocity fields with deformation of type $(r,s)$}
In this section, we show that our results cannot be recovered by employing the metric approach of \cite{MR3265963}.
As already mentioned in the introduction, the authors of \cite{MR3265963} showed the renormalization property for suitable classes of velocity fields in a broad class of metric measure spaces, equipped with a \emph{Dirichlet form}, a \emph{carré du champ}, a \emph{Markov semigroup} and an algebra of test functions. We briefly specialize \cite{MR3265963} to our setting.
On the metric measure space $(\hh^n,d,\mathcal L^N)$, we introduce the Dirichlet form $\mathscr E:L^2(\hh^n)\to[0,\infty]$ by
\begin{equation*}
    \mathscr E(u)\coloneqq\displaystyle{\begin{cases}
    \displaystyle\int_{\hh^n} \left\langle\nabla^\hhh u,\nabla^\hhh u\right\rangle\,d p&\text{ if }u\in W^{1,2}_\hhh(\hh^n)\\
    \infty&\text{otherwise}.
    \end{cases}}
\end{equation*}
By the $L^2$-lower semicontinuity of $\mathcal E$ (see \cite{MR1437714,lic}), the latter coincides with the Cheeger energy of $(\hh^n,d,\mathcal L^N)$. The quadratic form 
$\mathscr E$  is induced by the \emph{carré du champ} $\Gamma:W^{1,2}_\hhh(\hh^n)\to L^1(\hh^n)$ defined by 
\begin{equation*}
    \Gamma(u)\coloneqq\left\langle\nabla^\hhh u,\nabla^\hhh u\right\rangle\qquad\text{for every $u\in W^{1,2}_\hhh(\hh^n)$.}
\end{equation*}
We recall that 
\begin{equation}\label{eq_formulapercollegareilaplaciani}
    \int_{\hh^n} \left\langle\nabla^\hhh u,\nabla^\hhh v\right\rangle\,d p
    =-\int _{\hh^n}v\Delta^\hhh u\,dp \qquad\text{ for every $u,v\in W^{1,2}_\hhh(\hh^n)$ such that $\Delta^\hhh v\in L^2(\hh^n)$},
\end{equation}
where the \emph{horizontal Laplacian} $\Delta^\hhh$ is defined by
\begin{equation*}
    \Delta^\hhh u\coloneqq\divv\nabla^\hhh u=\sum_{i=1}^{2n}Z_iZ_i u \qquad\text{for every $u\in C^\infty(\hh^n)$},
\end{equation*}
and then naturally extended to distributions. By \eqref{eq_formulapercollegareilaplaciani}, $\Delta^\hhh$ is the correct Laplace operator induced by $\mathscr E$. Moreover, the $L^2$-gradient flow associated with $\mathscr E$ is the classical sub-Riemannian \emph{heat semigroup} $P_\tau$ (see \cite{MR0657581}). Finally, a suitable algebra of test functions could be the class $C^1_c(\hh^n)$. The uniqueness scheme of \cite{MR3265963} relies on suitable regularization properties of the heat semigroup, as well as on density assumptions on the algebra and on regularity conditions on the velocity field. First, $P_\tau$ is required to satisfy the so-called \emph{$L^s$-$\Gamma$ inequality} for any $s\in[2,\infty)$, i.e.,
\begin{equation}\label{eq_gammalpinequality}
    \left\|\sqrt{\Gamma(P_\tau f)}\right\|_{L^s(\hh^n)}\leq \frac{c_s}{\sqrt{\tau}}\|f\|_{L^s(\hh^n)}\qquad\text{for every $f\in L^s(\hh^n)$, $\tau\in(0,1)$,}
\end{equation}
where $c_s>0$ is independent of $f$. As pointed out in \cite[Corollary 6.3]{MR3265963}, \eqref{eq_gammalpinequality} is a typical consequence of the \emph{Bakry-\'Emery condition} $\mathrm{BE}_2(K,\infty)$. Namely, we recall that $\mathrm{BE}_2(K,\infty)$ holds if there exists $K\in\rr$ such that 
\begin{equation*}
    \Gamma\left(P_\tau f\right)\leq e^{-2K\tau}P_\tau\left(\Gamma f\right)\qquad\text{ for every $f\in W^{1,2}_\hhh(\hh^n)$.}
\end{equation*}
However, $(\hh^n,d,\mathcal L^N)$ is a prototypical setting in which $\mathrm{BE}_2(K,\infty)$ does not hold for any $K$. Indeed, it is well-known that $(\hh^n,d,\mathcal L^N)$ fails to satisfy the so-called \emph{curvature-dimension condition} $\mathrm{CD(K,\infty)}$ for any $K\in\rr$ (cf.~\cite{MR4061989,MR2520783}). In turn, when the Cheeger energy is a quadratic form, $\mathrm{CD(K,\infty)}$ is equivalent to $\mathrm{BE_2(K,\infty)}$ (cf.~\cite{MR3205729, MR3298475}), whence the latter fails in $(\hh^n,d,\mathcal L^N)$ for every $N \in \mathbb{N} \setminus \{0\}$ and every $K\in\rr$. Nevertheless, it is still possible to prove the validity of \eqref{eq_gammalpinequality}.
\begin{proposition}
    For every $s\in[2,\infty)$, \eqref{eq_gammalpinequality} holds.
\end{proposition}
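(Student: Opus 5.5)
The plan is to represent the heat semigroup as a group convolution with the heat kernel and move the horizontal derivatives onto the kernel. This uses left-invariance and scaling instead of any curvature condition.

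By \cite{MR0657581}, $P_\tau f=f*h_\tau$. Here $h_\tau$ is the horizontal heat kernel, a smooth probability density, and it obeys the scaling law
\[
h_\tau(p)=\tau^{-Q/2}h_1\left(\delta_{1/\sqrt\tau}(p)\right).
\]
Left-invariance of the $Z_j$ and \eqref{eq_derivatives_convolution} give, for $j=1,\ldots,2n$,
\[
Z_jP_\tau f=f*Z_jh_\tau .
\]
The scaling law, as in \eqref{eq_derivative_mollification}, gives
\[
Z_jh_\tau(p)=\tau^{-\frac{Q+1}{2}}(Z_jh_1)\left(\delta_{1/\sqrt\tau}(p)\right).
\]
Changing variables with \eqref{eq_dilation_Lebesgue} then yields
\[
\|Z_jh_\tau\|_{L^1(\hh^n)}=\tau^{-1/2}\|Z_jh_1\|_{L^1(\hh^n)}.
\]

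Next, Young's inequality for group convolution gives $\|f*g\|_{L^s}\leq\|f\|_{L^s}\|g\|_{L^1}$. This holds by Minkowski's integral inequality and the invariance \eqref{eq_Lebesgue_Haar}, exactly as in \eqref{eq_Young_inequality}. Hence
\[
\|Z_jP_\tau f\|_{L^s}\leq \tau^{-1/2}\|Z_jh_1\|_{L^1}\|f\|_{L^s}.
\]
Since $\sqrt{\Gamma(P_\tau f)}=|\nabla^\hhh P_\tau f|\leq\sum_{j}|Z_jP_\tau f|$, we obtain \eqref{eq_gammalpinequality} with
\[
c_s=\sum_{j=1}^{2n}\|Z_jh_1\|_{L^1(\hh^n)}.
\]
This constant does not depend on $s$ or $\tau$. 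The bound in fact holds for every $\tau>0$ and every $s\in[1,\infty]$.

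The one step needing care is $Z_jh_1\in L^1(\hh^n)$. I would deduce it from the Gaussian bounds for derivatives of the heat kernel on stratified groups (Folland, Jerison–Sánchez-Calle, Varopoulos–Saloff-Coste–Coulhon),
\[
|Z_jh_1(p)|\leq C e^{-d(0,p)^2/C}.
\]
The homogeneous measure of the shells $\{d(0,p)\sim r\}$ is comparable to $r^{Q-1}\,dr$, so this bound is integrable. Rather than rederive the bound, I would invoke it from \cite{MR0657581}. I would also justify the identity $Z_j(f*h_\tau)=f*Z_jh_\tau$ for $f\in L^s$ by density of $C_c$ in $L^s$ together with the same Young bound.
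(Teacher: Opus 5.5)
Your proof is correct, but it takes a genuinely different route from the paper's.

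\textbf{The paper's route.} The paper first takes $f\in C^\infty_c(\hh^n)$ and imports the reverse Poincar\'e inequality
$\Gamma(P_\tau f)\leq \frac{1}{\tau}\frac{n+1}{2n}\left[P_\tau(f^2)-(P_\tau f)^2\right]\leq \frac{1}{\tau}P_\tau(f^2)$
from \cite[Remark 3.3]{MR2462581}. This yields $\|\sqrt{\Gamma(P_\tau f)}\|_{L^s}\leq \tau^{-1/2}\|P_\tau(f^2)\|_{L^{s/2}}^{1/2}\leq\tau^{-1/2}\|f\|_{L^s}$ by contractivity of $P_\tau$ on $L^{s/2}$. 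A general $f\in L^s$ is then handled by density together with the $L^s$-lower semicontinuity of $u\mapsto\|\,|\nabla^\hhh u|\,\|_{L^s}$. The restriction $s\geq 2$ in the statement comes exactly from the $L^{s/2}$-contraction step.

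\textbf{Your route.} You use only left invariance and homogeneity: the kernel representation $P_\tau f=f*h_\tau$, the identity $Z_j(f*h_\tau)=f*Z_jh_\tau$, the scaling $\|Z_jh_\tau\|_{L^1}=\tau^{-1/2}\|Z_jh_1\|_{L^1}$, and Young's inequality.

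\textbf{What each buys.} Your argument works for every $s\in[1,\infty]$ and every $\tau>0$. It applies directly to $f\in L^s$ with no approximation or semicontinuity step, and it carries over verbatim to any Carnot group. The costs are a non-explicit constant $\sum_j\|Z_jh_1\|_{L^1}$, where the paper gets constant $1$, and reliance on heat kernel estimates. The paper's route stays inside the $\Gamma$-calculus framework of \cite{MR3265963} that the section is comparing against. It thereby shows explicitly that a reverse Poincar\'e inequality can take over the role of the failing $\mathrm{BE}_2(K,\infty)$ condition.

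\textbf{Two minor remarks.} First, you do not need full Gaussian bounds to get $Z_jh_1\in L^1(\hh^n)$. It already follows from the classical fact that $h_1$ is a Schwartz function, because $Z_j$ has polynomial coefficients. Second, for $s=\infty$ (outside the statement) you should verify the identity $Z_j(f*h_\tau)=f*Z_jh_\tau$ distributionally, by integrating by parts against a test function, rather than by density of $C_c$, which fails in $L^\infty$.
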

\begin{proof}
    Fix $s\in[2,\infty)$, $\tau\in(0,1)$ and $f\in C^\infty_c(\hh^n)$. Then, by \cite[Remark 3.3]{MR2462581},
    \begin{equation*}
      \Gamma(P_\tau f)\leq\frac{1}{\tau}\left(\frac{n+1}{2n}\right)\left[P_\tau(f^2)-\left(P_\tau f\right)^2\right]\leq \frac{1}{\tau}P_\tau(f^2).
    \end{equation*}
    In particular,
    \begin{equation}\label{eq_gammaineqsmoothproof}
        \left\|\sqrt{\Gamma(P_\tau f)}\right\|_{L^s(\hh^n)}
        \leq\frac{1}{\sqrt{\tau}}\left\|P_\tau\left(f^2\right)\right\|_{L^\frac{s}{2}(\hh^n)}\leq \frac{1}{\sqrt{\tau}}\left\|f\right\|_{L^s(\hh^n)},
    \end{equation}
    where the last inequality follows by the $L^{\frac{s}{2}}$-contractivity of $P_\tau$ (cf.~\cite{MR0657581}). Fix $f\in L^s(\hh^n)$, and let $\{f_h\}_{h\in\mathbb N}\subseteq C^\infty_c(\hh^n)$ be such that $f_h\to f$ strongly in $L^s(\hh^n)$. By linearity and contractivity, $P_\tau f_h\to P_\tau f$ strongly in $L^s(\hh^n)$, whence, by the strong $L^s$-lower-semicontinuity (cf.~\cite{MR1437714,lic}) of $ u\mapsto \|\sqrt{\Gamma\left(\nabla^\hhh u\right)}\|_{L^s(\hh^n)}$, 
we conclude that 
\begin{equation*}
    \left\|\sqrt{\Gamma(P_\tau f)}\right\|_{L^s(\hh^n)}\leq\liminf_{h\to\infty}\left\|\sqrt{\Gamma(P_\tau f_h)}\right\|_{L^s(\hh^n)}\overset{\eqref{eq_gammaineqsmoothproof}}{\leq}\frac{1}{\sqrt{\tau}}\liminf_{h\to\infty}\left\|f_h\right\|_{L^s(\hh^n)}=\frac{1}{\sqrt{\tau}}\left\|f\right\|_{L^s(\hh^n)}.
\end{equation*}
\end{proof}
Coming to the density properties of the algebra of test functions $C^1_c(\hh^n)$, it is easy to check that the latter satisfies the basic assumptions \cite[(2-16)]{MR3265963} and \cite[(2-17)]{MR3265963}. In addition (cf.~\cite[(4-3)]{MR3265963}), the algebra is required to contain a sequence $\{f_h\}_{h\in\mathbb N}$ such that 
\begin{equation}\label{eq_trepropalgebra}
    0\leq f_h\leq 1\text{ for every $h\in\mathbb N$,}\qquad f_h\nearrow 1\quad\text{ a.e.~on $\hh^n$,}\qquad \sqrt{\Gamma(f_h)}\to 0\text{ weakly$^\star$ in $L^\infty(\hh^n)$.}
\end{equation}
A sequence satisfying \eqref{eq_trepropalgebra} can be easily constructed by choosing a sequence of smooth cut-off functions between $B(0,h+1)$ and $B(0,h+2)$. Finally, we turn to the regularity assumptions on the velocity field. For the sake of simplicity, we focus on the autonomous case. From now on, we fix $s\in(1,\infty]$ and a vector field $\bb\in\VfH{n}$ with $b_i\in W^{1,s}_{\hhh}(\hh^n)$, $i=1,\ldots,2n$ and $Tb_N\in L^s(\hh^n)$,
so that $\divv\bb\in L^s(\hh^n)$. Following \cite{MR3265963}, $\bb$ induces a \emph{derivation} acting on a test function $f\in C^\infty_c(\hh^n)$ as 
\begin{equation*}
    df(\bb)\coloneqq\langle\bb,\nabla f\rangle.
\end{equation*}
Again, according to \cite{MR3265963}, the \emph{deformation} $\dsym\bb$ is defined for every $f,g\in C^\infty_c(\hh^n)$ by
\begin{equation}\label{definizioonedsym}
    \int_{\hh^n}\dsym\bb(f,g)\,dp\coloneqq
    -\frac{1}{2}\int_{\hh^n}\left[df(\bb)\Delta^\hhh g+dg(\bb)\Delta^\hhh f-\divv\bb\,\left\langle\nabla^\hhh f,\nabla^\hhh g\right\rangle\right]\,dp.
\end{equation}
Clearly, the above definition extends to any couple $f,g$ for which the right hand side is meaningful.
For what concerns $\bb$, the approach of \cite{MR3265963} relies on the following two assumptions.
\begin{itemize}
    \item $\bb$ needs to satisfy an a.e.~upper bound of the form
    \begin{equation}\label{upperboundvf}
        |df(\bb)|\leq |\bb|\left|\nabla^\hhh f\right|
    \end{equation}
    for every $f$ belonging to the algebra of test functions. 
    \item $\bb$ needs to have a \emph{deformation of type $(r,s)$.} Namely, there exists $c\geq 0$ such that 
\begin{equation}\label{eq_defoftypedef}
      \int_{\hh^n}\dsym\bb(f,g)\,dp\leq c\left\|\sqrt{\left(\nabla^\hhh f\right)}\right\|_{L^r(\hh^n)}\left\|\sqrt{\left(\nabla^\hhh g\right)}\right\|_{L^s(\hh^n)}
\end{equation}
for every $f,g$ for which the above expression is meaningful.
\end{itemize}
 On the one hand, it is clear that \eqref{upperboundvf} rules out non-horizontal vector fields. On the other hand, \eqref{eq_defoftypedef} rules out vector fields which do not possess a contact structure. Since a non-trivial contact vector field cannot be horizontal, the above two restrictions prevent the approach of \cite{MR3265963} from being applicable to our case. To justify the validity of the second restriction (independently of the first one, hence even considering a non-horizontal field $\bb$), we assume for the sake of simplicity that all components $b_i$ belong to $C^\infty_c(\hh^n)$, and we make \eqref{definizioonedsym} more explicit. Precisely, if $f,g\in C^\infty_c(\hh^n)$,
 \begin{equation}\label{eq_defofinel}
\begin{split}
      \int_{\hh^n}\dsym\bb(f,g)\,dp&=\int_{\hh^n}\sum_{i,j=1}^{2n}\left(\frac{Z_i b_j+Z_jb_i}{2}\right)Z_i fZ_j g\,dp\\
      &\quad+\frac{1}{2}\int_{\hh^n}\left\langle \nabla^\hhh b_N+4\JJ(\bb),Tg\,\nabla^\hhh f+Tf\,\nabla^\hhh g\right\rangle\,dp.
\end{split}
\end{equation}
 The validity of \eqref{eq_defofinel} follows by applying the divergence theorem to the first term on the right hand side and by exploiting the commutation relations \eqref{eq_commutation}. Indeed, notice that 
 \begin{equation*}
        \begin{split}
            \frac{1}{2}&\int_{\hh^n}\sum_{i,j=1}^{2n}Z_i b_j\left(Z_i fZ_j g+Z_j fZ_i g\right)\,dp =\frac{1}{2}\sum_{i,j=1}^{2n}\underbrace{\int_{\hh^n}Z_i\left(b_j\left(Z_i fZ_j g+Z_j fZ_i g\right)\right)\,dp}_{=0}\\
            &\quad-\frac{1}{2}\int_{\hh^n}\sum_{i,j=1}^{2n}\Bigl(\underbrace{b_jZ_iZ_i fZ_j g}_\mathrm{I}+\underbrace{b_jZ_i fZ_iZ_j g}_\mathrm{II}+\underbrace{b_jZ_iZ_j fZ_i g}_\mathrm{III}+\underbrace{b_jZ_j fZ_iZ_i g}_\mathrm{IV}\Bigr)\,dp\\
            &=-\frac{1}{2}\int_{\hh^n}\left[df(\bb)\Delta^\hhh g+dg(\bb)\Delta^\hhh f\right]\,dp+\frac{1}{2}\int_{\hh^n}\left(\Delta^\hhh f\, b_N\,Tg+\Delta^\hhh g\, b_N\,Tf\right)\,dp\\
            &\quad-\frac{1}{2}\int_{\hh^n}\sum_{i,j=1}^{2n}\left(b_jZ_i fZ_jZ_i g+b_jZ_jZ_i fZ_i g\right)\,dp+\frac{1}{2}\int_{\hh^n}\sum_{i,j=1}^{2n}\left(b_jZ_i f[Z_j,Z_i] g+b_j[Z_j,Z_i]fZ_i g\right)\,dp.
        \end{split}
    \end{equation*}
   In the last equality, we exploited $\mathrm{I}$ and $\mathrm{IV}$ to make the first two terms on the right hand side of \eqref{definizioonedsym} explicit, and rewrote $\mathrm{II}$ and $\mathrm{III}$ highlighting commutators. Applying again the divergence theorem,
    \begin{equation*}
        \begin{split}
            -\frac{1}{2}\int_{\hh^n}\sum_{i,j=1}^{2n}&\left(b_jZ_i fZ_jZ_i g+b_jZ_jZ_i fZ_i g\right)\,dp= -\frac{1}{2}\sum_{i,j=1}^{2n}\underbrace{\int_{\hh^n}Z_j\left(b_jZ_i fZ_i g\right)\,dp}_{=0}\\
            &\quad+\frac{1}{2}\int_{\hh^n}\sum_{i,j=1}^{2n}Z_jb_jZ_i fZ_i g\,dp+\frac{1}{2}\int_{\hh^n}\sum_{i,j=1}^{2n}b_jZ_jZ_i fZ_i g\,dp-\frac{1}{2}\int_{\hh^n}\sum_{i,j=1}^{2n}b_jZ_jZ_i fZ_i g\,dp\\
            &=\frac{1}{2}\int_{\hh^n}\divv\bb\,\left\langle \nabla^\hhh f,\nabla^\hhh g\right\rangle\,dp-\frac{1}{2}\int_{\hh^n}Tb_N\,\left\langle \nabla^\hhh f,\nabla^\hhh g\right\rangle\,dp.
        \end{split}
    \end{equation*}
Moreover, by \eqref{eq_commutation},
    \begin{equation*}
        \begin{split}
            \frac{1}{2}\int_{\hh^n}\sum_{i,j=1}^{2n}&b_jZ_i f[Z_j,Z_i] g\,dp+\frac{1}{2}\int_{\hh^n}\sum_{i,j=1}^{2n}b_j[Z_j,Z_i]fZ_i g\,dp\\
            &=\frac{1}{2}\int_{\hh^n}\sum_{j=1}^n\left(b_jY_j f[X_j,Y_j] g+b_{n+j}X_j f[Y_j,X_j] g+b_jY_j g[X_j,Y_j] f+b_{n+j}X_j g[Y_j,X_j] f\right)\,dp\\
            &=\frac{1}{2}\int_{\hh^n}\sum_{j=1}^n\left[4b_j\left(-Y_j f\right)\,T g+4b_{n+j}X_j f\,T g+4b_j\left(-Y_j g\right)\,T f+4b_{n+j}X_j g\,T f\right]\,dp\\
            &=\frac{1}{2}\int_{\hh^n}\left[\left\langle 4\bb,\JJ\left(\nabla^\hhh f\right)\right\rangle\, Tg+\left\langle 4\bb,\JJ\left(\nabla^\hhh g\right)\right\rangle\, Tf\right]\,dp.
        \end{split}
    \end{equation*}
Therefore, by the above computations,
    \begin{equation*}
        \begin{split}
            \int_{\hh^n}&\dsym\bb(f,g)\,dp=  \int_{\hh^n}\sum_{i,j=1}^{2n}\left(\frac{Z_i b_j+Z_jb_i}{2}\right)Z_i fZ_j g\,dp+\frac{1}{2}\int_{\hh^n}\left\langle 4\JJ(\bb),Tg\,\nabla^\hhh f+Tf\,\nabla^\hhh g\right\rangle\,dp\\
            &\quad-\frac{1}{2}\int_{\hh^n}\left(\Delta^\hhh f\, b_N\,Tg+\Delta^\hhh g\, b_N\,Tf\right)\,dp+\frac{1}{2}\int_{\hh^n}Tb_N\,\left\langle \nabla^\hhh f,\nabla^\hhh g\right\rangle\,dp\\
            &=  \int_{\hh^n}\sum_{i,j=1}^{2n}\left(\frac{Z_i b_j+Z_jb_i}{2}\right)Z_i fZ_j g\,dp+\frac{1}{2}\int_{\hh^n}\left\langle \nabla^\hhh b_N+4\JJ(\bb),Tg\,\nabla^\hhh f+Tf\,\nabla^\hhh g\right\rangle\,dp\\
            &\quad-\frac{1}{2}\int_{\hh^n}\left[\divv\left(b_N\nabla^\hhh f\right)\,Tg+\divv\left(b_N\nabla^\hhh g\right)\,Tf\right]
            +\frac{1}{2}\int_{\hh^n}Tb_N\,\left\langle \nabla^\hhh f,\nabla^\hhh g\right\rangle\,dp,\\
        \end{split}
    \end{equation*}
 where in the last equality we just added and subtracted the quantity
 \begin{equation*}
     \frac{1}{2}\int_{\hh^n}\left\langle \nabla^\hhh b_N,Tg\,\nabla^\hhh f+Tf\,\nabla^\hhh g\right\rangle\,dp.
 \end{equation*}
 Applying once more the divergence theorem, and since $T$ commutes with $Z_1,\ldots,Z_{2n}$, we conclude that
\begin{equation*}
    \begin{split}
        -\frac{1}{2}\int_{\hh^n}\left[\divv\left (b_N\,\nabla^\hhh f\right)\, Tg+\divv\left (b_N\,\nabla^\hhh g\right)\, Tf\right]\,dp&=\frac{1}{2}\int_{\hh^n}b_N\left(\left\langle\nabla^\hhh f,\nabla^\hhh Tg\right\rangle+\left\langle\nabla^\hhh Tf,\nabla^\hhh g\right\rangle\right)\,dp\\
        &=\frac{1}{2}\int_{\hh^n}b_N\,T\left\langle\nabla^\hhh f,\nabla^\hhh g\right\rangle\,dp\\
        &=-\frac{1}{2}\int_{\hh^n}T b_N\,\left\langle\nabla^\hhh f,\nabla^\hhh g\right\rangle\,dp, 
    \end{split}
\end{equation*}
whence \eqref{eq_defofinel} follows.
Due to the presence of the vertical derivatives $Tf$ and $Tg$ on the right hand side of \eqref{eq_defofinel}, we conclude that $\bb$ has deformation of type $(r,s)$ provided that it is a contact vector field. Indeed, in this case the second integral on the right hand side of \eqref{eq_defofinel} vanishes by \eqref{extra_J}, and \eqref{eq_defoftypedef} follows by choosing $r=s=2(s')'$ and $c=\sum_{i,j=1}^{2n}\|Z_i b_j\|_{L^{s'}(\rr^N)}$. 

\appendix
\section{Chain rule for weak derivatives along vector fields}
The following chain rule for weak derivatives along vector fields is surely well-known. Being typically stated for Sobolev functions (cf.~\cite{MR1158660,MR3726909}), we include a proof for the sake of completeness.
\begin{proposition}
    Let $m\in\mathbb{N}\setminus\{0\}$. Let $A\subseteq\rr^m$ be open. Let $X$ be a locally Lipschitz continuous vector field over $A$. 
    Let $u\in L^1_{\mathrm{loc}}(A)$ be such that $Xu\in L^1_{\mathrm{loc}}(A)$ and let $\beta\in C^1(\rr)$ be such that $\beta'\in L^\infty(\rr)$. Then $\beta(u)\in L^1_{\mathrm{loc}}(A)$, $X(\beta(u))\in L^1_{\mathrm{loc}}(A)$ and 
    \begin{equation}\label{chainrulesobolev}
        X(\beta(u))=\beta'(u)Xu\qquad\text{a.e.~on $A$.}
    \end{equation}
    Moreover, if $u\in L^\infty_{\mathrm{loc}}(A)$, the above facts hold for every $\beta\in C^1(\rr)$. 
\end{proposition}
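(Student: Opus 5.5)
We will prove the chain rule by Friedrichs-type mollification: regularize $u$ by Euclidean convolution and control the commutator between $X$ and the convolution.

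First, integrability. Since $|\beta(r)|\le|\beta(0)|+\norm{\beta'}_{L^\infty}|r|$, we get $\beta(u)\in L^1_{\mathrm{loc}}(A)$. Also $\beta'(u)Xu\in L^1_{\mathrm{loc}}(A)$ because $\beta'$ is bounded. The content of the statement is therefore the identity \eqref{chainrulesobolev} in the sense of distributions, with $X(\beta(u))$ defined by duality: $\langle X w,\varphi\rangle=-\int_A w\,(X\varphi+\divv X\,\varphi)$, where $\divv X\in L^\infty_{\mathrm{loc}}$ since $X$ is locally Lipschitz.

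The plan is as follows. Fix $\varphi\in C^\infty_c(A)$ and an open $A'\Subset A$ containing $\mathrm{supp}(\varphi)$. Let $\eta_\eps$ be a standard Euclidean mollifier and set $u_\eps=u*\eta_\eps$, which is smooth on $A'$ for small $\eps$. The key step is the DiPerna--Lions commutator lemma:
\begin{equation*}
X u_\eps-(Xu)*\eta_\eps\longrightarrow 0\qquad\text{in }L^1(A')\text{ as }\eps\searrow 0.
\end{equation*}
We prove it by writing the commutator as
\begin{equation*}
\int u(y)\,\bigl[X(x)-X(y)\bigr]\cdot\nabla\eta_\eps(x-y)\,dy\;-\;\bigl(u\,\divv X\bigr)*\eta_\eps ,
\end{equation*}
which is an identity from the weak definition of $Xu$. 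The kernel $\bigl[X(x)-X(y)\bigr]\cdot\nabla\eta_\eps(x-y)$ is bounded by $\mathrm{Lip}(X,A)\,\eps^{-m}$ times a compactly supported function of $(x-y)/\eps$. This gives a uniform bound $\norm{\text{commutator}}_{L^1(A')}\le C\norm{u}_{L^1(A'')}$. The convergence to $0$ holds for smooth $u$ by a direct Taylor expansion, and then extends to all $u\in L^1_{\mathrm{loc}}$ by density. This is the same functional-analytic pattern as in \Cref{lem_continuity_translations}.

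Consequently $Xu_\eps\to Xu$ in $L^1(A')$ and $u_\eps\to u$ in $L^1(A')$. Since $u_\eps$ is smooth, the classical chain rule gives $X(\beta(u_\eps))=\beta'(u_\eps)Xu_\eps$. Testing against $\varphi$, we obtain
\begin{equation*}
-\int\beta(u_\eps)\,(X\varphi+\divv X\,\varphi)=\int\beta'(u_\eps)\,Xu_\eps\,\varphi .
\end{equation*}
On the left, $\beta(u_\eps)\to\beta(u)$ in $L^1(A')$ because $\beta$ is Lipschitz. On the right, split $\beta'(u_\eps)Xu_\eps=\beta'(u_\eps)(Xu_\eps-Xu)+\beta'(u_\eps)Xu$. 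The first term tends to $0$ in $L^1$ by the boundedness of $\beta'$. For the second, pass to a subsequence with $u_\eps\to u$ a.e.; continuity of $\beta'$ and dominated convergence (dominated by $\norm{\beta'}_\infty|Xu|$) give $\beta'(u_\eps)Xu\to\beta'(u)Xu$. This proves \eqref{chainrulesobolev}.

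For the final claim, suppose $u\in L^\infty_{\mathrm{loc}}(A)$. Then $\norm{u_\eps}_{L^\infty(A')}\le\norm{u}_{L^\infty(A'')}=:L$, so we may replace $\beta$ by a $C^1$ function $\tilde\beta$ that agrees with $\beta$ on $[-L,L]$ and has bounded derivative. The argument above then applies verbatim.

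The main obstacle is the commutator estimate. Everything else is routine; the delicate point there is the careful use of the Lipschitz bound $|X(x)-X(y)|\le \mathrm{Lip}(X,A)\,|x-y|\le \mathrm{Lip}(X,A)\,\eps$, which exactly compensates the factor $\eps^{-1}$ in $\nabla\eta_\eps$ and yields the uniform $L^1$ bound.
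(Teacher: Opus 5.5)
Your proof is correct up to one sign slip, and it has the same skeleton as the paper's: approximate $u$ by smooth functions converging in the graph norm of $X$, use the classical chain rule, and pass to the limit. The paper imports the approximating sequence from \cite[Theorem 1.2.3]{MR1437714}. That sequence satisfies $u_h\to u$ a.e., $Xu_h\to Xu$ in $L^1$, and admits an $L^1$ majorant $g\ge|Xu_h|$. You instead build the approximation by Euclidean mollification and prove the Friedrichs commutator lemma yourself, which is the standard engine behind such Meyers--Serrin results. So your version is self-contained rather than different in spirit.

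It also has two small advantages. Your splitting $\beta'(u_\eps)(Xu_\eps-Xu)+\beta'(u_\eps)Xu$ needs no majorant, whereas the paper uses $g$ to handle $(\beta'(u_h)-\beta'(u))Xu_h$. And your replacement of $\beta$ by $\tilde\beta$ makes precise the paper's one-line treatment of $u\in L^\infty_{\mathrm{loc}}$, which tacitly needs locally uniformly bounded approximants.

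The slip is in your commutator identity. With your convention $\langle Xw,\varphi\rangle=-\int w\,(X\varphi+\divv X\,\varphi)$, one has $(Xu)*\eta_\eps(x)=\int u(y)\,X(y)\cdot\nabla\eta_\eps(x-y)\,dy-\bigl((u\,\divv X)*\eta_\eps\bigr)(x)$, hence
\[
Xu_\eps-(Xu)*\eta_\eps=\int u(y)\,\bigl[X(x)-X(y)\bigr]\cdot\nabla\eta_\eps(x-y)\,dy+\bigl(u\,\divv X\bigr)*\eta_\eps,
\]
with a plus sign. This is not cosmetic. The integral term tends to $-u\,\divv X$, since $\int z_l\,\partial_k\eta(z)\,dz=-\delta_{kl}$. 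With your sign, the commutator would therefore converge to $-2u\,\divv X$ instead of $0$.

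Two smaller points. The uniform $L^1$ bound is unaffected, but the Lipschitz constant must be taken on some $A''$ with $A'\Subset A''\Subset A$, since $X$ is only locally Lipschitz. For the smooth case, no expansion of the merely Lipschitz $X$ is needed: for $u\in C^\infty_c(A'')$ the commutator equals $X\cdot\bigl((\nabla u)*\eta_\eps\bigr)-(X\cdot\nabla u)*\eta_\eps$, which tends to $0$ uniformly on $A'$.
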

\begin{proof}
    Set $C=\|\beta'\|_{L^\infty(\rr)}$. Fix an open set $B\Subset A$. Then
    \begin{equation}\label{tesi_chain_rule_in_proof}
        \int_B|\beta(u)|\,dp \leq C\int_B|u|\,dp+\beta(0)|B|<\infty,
    \end{equation}
    whence $\beta(u)\in L^1_{\mathrm{loc}}(A)$. Notice that $\beta'(u)Xu\in L^1_{\mathrm{loc}}(A)$. To conclude, it suffices to show that 
    \begin{equation*}
       \int_A\left(\beta'(u)Xu\right)\varphi\,dp=-\int_A\divv X\,\beta(u)\varphi\,dp -\int_A\beta(u)X\varphi\,dp\qquad\text{for every $\varphi\in C^\infty_c(A)$.}
    \end{equation*}
    Fix $\varphi\in C^\infty_c(A)$. Let $B\subseteq A$ be open and such that $\mathrm{supp}(\varphi)\Subset B\Subset A$. Then $X$ is Lipschitz continuous on $B$, and moreover $u,Xu\in L^1(B)$. By \cite[Theorem 1.2.3]{MR1437714}, there exists a sequence $\{u_h\}_{h\in \mathbb N}\subseteq C^\infty(B)$ such that $u_h, X u_h\in L^1(B)$ for any $h\in\mathbb N$, $u_h\to u$, $Xu_h\to Xu$ strongly in $L^1(B)$, $u_h\to u$ a.e.~on $B$ and $|Xu_h|\leq g$ a.e.~on $B$ for some $g\in L^1(B)$. Since every $u_h$ is smooth, $\beta'(u_h)Xu_h=X(\beta(u_h))$ on $B$ for every $h\in\mathbb N$. In particular, for every $h\in\mathbb N$,
\begin{equation}\label{coselisceinterproofderideb}
    \int_A\beta'(u_h)Xu_h\varphi\,dp
    = -\int_A\divv X\,\beta(u_h)\varphi\,dp-\int_A\beta(u_h)X\varphi\,dp.
\end{equation}
Notice that, since $\beta'$ is continuous,  $\beta'(u_h)\to\beta'(u)$ a.e.~on $B$, and moreover, $|\beta'(u_h)-\beta'(u)|\leq 2C$. Therefore, by the dominated convergence theorem,
\begin{equation*}
    \begin{split}
        \int_A|\beta'(u_h)Xu_h\varphi-\beta'(u)Xu\varphi|\,dp&\leq  \|\varphi\|_{L^\infty(B)}\int_B|\beta'(u)||Xu_h-Xu|\,dp+ \int_B|\beta'(u_h)-\beta'(u)||Xu_h||\varphi|\,dp\\
        &\leq  C\|\varphi\|_{L^\infty(B)}\int_B|Xu_h-Xu|\,dp+\int_B|\beta'(u_h)-\beta'(u)||Xu_h||\varphi|\,dp\to 0.
    \end{split}
\end{equation*}
Furthermore,
\begin{equation*}
    \int_A|\divv X\,\beta(u_h)\varphi-\divv X\,\beta(u)\varphi|\,dp\leq C\|\varphi\|_{L^\infty(B)}\|\divv X\|_{L^\infty(B)}\int_{B}|u_h-u|\,dp\to 0.
\end{equation*}
Finally, 
\begin{equation*}
    \int_A|\beta(u_h)X\varphi-\beta(u)X\varphi|\,dp\leq C\|X\varphi\|_{L^\infty(B)}\int_{B}|u_h-u|\,dp\to 0.
\end{equation*}
Hence, \eqref{tesi_chain_rule_in_proof} follows by letting $h\to\infty$ in \eqref{coselisceinterproofderideb}. In conclusion, when $u\in L^\infty_{\mathrm{loc}}(A)$, the above arguments apply for every $\beta\in C^1(\rr)$.
\end{proof}
\bibliographystyle{abbrv}

\end{document}